\documentclass{article}
\usepackage{amsmath}
\usepackage{amssymb}
\usepackage{amsthm}
\usepackage{graphicx}
\usepackage{cite}
\usepackage[arrow,curve,matrix,arc,2cell]{xy}
\UseAllTwocells
\usepackage[utf8]{inputenc}
\usepackage[unicode]{hyperref}
\DeclareFontFamily{U}{rsfs}{} \DeclareFontShape{U}{rsfs}{n}{it}{<->
rsfs10}{} \DeclareSymbolFont{mscr}{U}{rsfs}{n}{it}
\DeclareSymbolFontAlphabet{\scr}{mscr}
\def\mathscr{\scr}
\begin{document}
\def\e#1\e{\begin{equation}#1\end{equation}}
\def\ea#1\ea{\begin{align}#1\end{align}}
\def\eq#1{{\rm(\ref{#1})}}
\theoremstyle{plain}
\newtheorem{thm}{Theorem}[section]
\newtheorem{prop}[thm]{Proposition}
\newtheorem{lem}[thm]{Lemma}
\newtheorem{cor}[thm]{Corollary}
\newtheorem{quest}[thm]{Question}
\theoremstyle{definition}
\newtheorem{dfn}[thm]{Definition}
\newtheorem{ex}[thm]{Example}
\newtheorem{rem}[thm]{Remark}
\numberwithin{equation}{section}
\def\dim{\mathop{\rm dim}\nolimits}
\def\supp{\mathop{\rm supp}\nolimits}
\def\cosupp{\mathop{\rm cosupp}\nolimits}
\def\id{\mathop{\rm id}\nolimits}
\def\Hess{\mathop{\rm Hess}\nolimits}
\def\Crit{\mathop{\rm Crit}}
\def\an{{\rm an}}
\def\Aut{\mathop{\rm Aut}}
\def\coh{\mathop{\rm coh}\nolimits}
\def\Hom{\mathop{\rm Hom}\nolimits}
\def\Perv{\mathop{\rm Perv}\nolimits}
\def\Sch{\mathop{\rm Sch}\nolimits}
\def\Var{\mathop{\rm Var}\nolimits}
\def\red{{\rm red}}
\def\DM{\mathop{\rm DM}\nolimits}
\def\Spec{\mathop{\rm Spec}\nolimits}
\def\rank{\mathop{\rm rank}\nolimits}
\def\bs{\boldsymbol}
\def\ge{\geqslant}
\def\le{\leqslant\nobreak}
\def\bA{{\mathbin{\mathbb A}}}
\def\bL{{\mathbin{\mathbb L}}}
\def\cE{{\mathbin{\cal E}}}
\def\cL{{\mathbin{\cal L}}}
\def\cM{{\mathbin{\cal M}}}
\def\oM{{\mathbin{\smash{\,\,\overline{\!\!\mathcal M\!}\,}}}}
\def\O{{\mathbin{\cal O}}}
\def\PV{{\mathbin{\cal{PV}}}}
\def\bG{{\mathbin{\mathbb G}}}
\def\fE{{\mathbin{\mathfrak E}}}
\def\fG{{\mathbin{\mathfrak G}}}
\def\fU{\mathbin{\mathfrak U}}
\def\C{{\mathbin{\mathbb C}}}
\def\K{{\mathbin{\mathbb K}}}
\def\N{{\mathbin{\mathbb N}}}
\def\Q{{\mathbin{\mathbb Q}}}
\def\Z{{\mathbin{\mathbb Z}}}
\def\al{\alpha}
\def\be{\beta}
\def\ga{\gamma}
\def\de{\delta}
\def\io{\iota}
\def\ep{\epsilon}
\def\la{\lambda}
\def\ka{\kappa}
\def\th{\theta}
\def\ze{\zeta}
\def\up{\upsilon}
\def\vp{\varphi}
\def\si{\sigma}
\def\om{\omega}
\def\De{\Delta}
\def\La{\Lambda}
\def\Th{\Theta}
\def\Om{\Omega}
\def\Ga{\Gamma}
\def\Si{\Sigma}
\def\Up{\Upsilon}
\def\ts{\textstyle}
\def\st{\scriptstyle}
\def\sst{\scriptscriptstyle}
\def\sm{\setminus}
\def\bu{\bullet}
\def\op{\oplus}
\def\ot{\otimes}
\def\boxt{\boxtimes}
\def\ov{\overline}
\def\ul{\underline}
\def\bigop{\bigoplus}
\def\iy{\infty}
\def\es{\emptyset}
\def\ra{\rightarrow}
\def\ab{\allowbreak}
\def\longra{\longrightarrow}
\def\hookra{\hookrightarrow}
\def\bs{\boldsymbol}
\def\t{\times}
\def\ci{\circ}
\def\ti{\tilde}
\def\d{{\rm d}}
\def\od{\odot}
\def\bd{\boxdot}
\def\md#1{\vert #1 \vert}
\def\cS{{\mathbin{\cal S}}}
\def\cSz{{\mathbin{\cal S}\kern -0.1em}^{\kern .1em 0}}
\title{On motivic vanishing cycles of critical loci}
\author{Vittoria Bussi, Dominic Joyce, and Sven Meinhardt}
\date{}
\maketitle

\begin{abstract}
Let $U$ be a smooth scheme over an algebraically closed field $\K$
of characteristic zero and $f:U\ra\bA^1$ a regular function, and
write $X=\Crit(f)$, as a closed $\K$-subscheme of $U$. The {\it
motivic vanishing cycle\/} $MF_{U,f}^{\rm mot,\phi}$ is an element
of the $\hat\mu$-equivariant motivic Grothendieck ring
$\smash{\cM^{\hat\mu}_X}$, defined by Denef and Loeser
\cite{DeLo3,DeLo2} and Looijenga \cite{Looi}, and used in Kontsevich
and Soibelman's theory of motivic Donaldson--Thomas invariants
\cite{KoSo1}.

We prove three main results:
\smallskip

\noindent{\bf(a)} $MF_{U,f}^{\rm mot,\phi}\!$ depends only on the
third-order thickenings $U^{(3)}\!,f^{(3)}$~of~$U,f$.

\smallskip

\noindent{\bf(b)} If $V$ is another smooth $\K$-scheme,
$g:V\ra\bA^1$ is regular, $Y=\Crit(g)$, and $\Phi:U\ra V$ is an
embedding with $f=g\ci\Phi$ and $\Phi\vert_X:X\ra Y$ an isomorphism,
then $\Phi\vert_X^*\bigl(MF^{\rm mot, \phi}_{V,g}\bigr)=MF^{\rm mot,
\phi}_{U,f}\od\Up(P_\Phi)$ in a certain quotient ring
$\oM^{\hat\mu}_X$ of $\cM^{\hat\mu}_X$, where $P_\Phi\ra X$ is a
principal $\Z_2$-bundle associated to $\Phi$ and $\Up:\{$principal
$\Z_2$-bundles on $X\}\ra\oM^{\hat\mu}_X$ a natural morphism.
\smallskip

\noindent{\bf(c)} If $(X,s)$ is an {\it oriented algebraic
d-critical locus\/} in the sense of Joyce \cite{Joyc2}, there is a
natural motive $MF_{X,s}\in\oM^{\hat\mu}_X$, such that if $(X,s)$ is
locally modelled on $\Crit(f:U\ra\bA^1)$, then $MF_{X,s}$ is locally
modelled on~$MF_{U,f}^{\rm mot,\phi}$.

\smallskip

Using results of Pantev, To\"en, Vezzosi and Vaqui\'e \cite{PTVV},
these imply the existence of natural motives on moduli schemes of
coherent sheaves on a Calabi--Yau 3-fold equipped with `orientation
data', as required in Kontsevich and Soibelman's motivic
Donaldson--Thomas theory \cite{KoSo1}, and on intersections $L\cap
M$ of oriented Lagrangians $L,M$ in an algebraic symplectic manifold
$(S,\om)$.
\end{abstract}

\setcounter{tocdepth}{2}
\tableofcontents

\section{Introduction}
\label{mo1}

Brav, Bussi, Dupont, Joyce and Szendr\H oi \cite{BBDJS} proved some
results on perverse sheaves, $\scr D$-modules and mixed Hodge
modules of vanishing cycles on critical loci, and gave some
applications to categorification of Donaldson--Thomas invariants of
Calabi--Yau 3-folds, and to defining `Fukaya categories' of complex
or algebraic symplectic manifolds using perverse sheaves. This paper
is a sequel to \cite{BBDJS}, in which we prove analogous results for
motives and motivic vanishing cycles, with applications to motivic
Donaldson--Thomas invariants.

Let $\K$ be an algebraically closed field of characteristic zero,
$U$ a smooth $\K$-scheme, $f:U\ra\bA^1$ a regular function, and
$U_0=f^{-1}(0)$, $X=\Crit(f)$ as closed $\K$-subschemes of $U$.
Following Denef and Loeser \cite{DeLo2,DeLo3} and Looijenga
\cite{Looi}, in \S\ref{mo2} we will define the {\it motivic nearby
cycle\/} $MF_{U,f}^{\rm mot}$ in the monodromic Grothendieck group
$\smash{K_0^{\hat\mu}(U_0)}$ of $\hat\mu$-equivariant motives on
$U_0$, and the {\it motivic vanishing cycle\/} $MF_{U,f}^{\rm
mot,\phi}$ in the ring $\cM_X^{\hat\mu}=K_0^{\hat\mu}(X)[\bL^{-1}]$
with Tate motive $\bL=[\bA^1]$ inverted.

Here $\smash{MF_{U,f}^{\rm mot}}$ is the motivic analogue of the
constructible complex of nearby cycles $\psi_f(\Q_U)\in\Perv(U_0)$
in \cite{BBDJS}, and $MF_{U,f}^{\rm mot,\phi}$ the motivic analogue
of the perverse sheaf of vanishing cycles $\PV_{U,f}^\bu=
\phi_f(\Q_U[\dim U-1])\in\Perv(X)$ in \cite{BBDJS} (at least when
$X\subseteq U_0$). The fibre $MF_{U,f}^{\rm mot}(x)$ of
$MF_{U,f}^{\rm mot}$ at $x\in U_0$ is the {\it motivic Milnor
fibre\/} of $f$ at $x$ from \cite{DeLo2,DeLo3,Looi}, the algebraic
analogue of the Milnor fibre $MF_f(x)$ at $x$ of a holomorphic
function $f:U\ra\C$ on a complex manifold~$U$.

We will prove three main results, Theorems \ref{mo3thm1},
\ref{mo4thm2} and \ref{mo5thm5}. The first, Theorem \ref{mo3thm1},
says that $MF_{U,f}^{\rm mot,\phi}\in\cM_X^{\hat\mu}$ depends only
on the third-order thickenings $U^{(3)},f^{(3)}$ of $U,f$ at $X$,
where $\O_{\smash{U^{(3)}}}=\O_U/I_X^3$, for $I_X\subseteq\O_U$ the
ideal of functions $U\ra\bA^1$ vanishing on $X$, and
$f^{(3)}=f\vert_{\smash{U^{(3)}}}$. We also show by example that
$U^{(2)},f^{(2)}$ do not determine $MF_{U,f}^{\rm mot,\phi}$.

Our second and third main results involve {\it principal\/
$\Z_2$-bundles\/} $P\ra X$ over a $\K$-scheme $X$. In \S\ref{mo25}
we define a natural motive $\Up(P)\in\cM_X^{\hat\mu}$ for each
principal $\Z_2$-bundle $P\ra X$. As in Denef and Loeser
\cite{DeLo3} and Looijenga \cite{Looi}, there is a (non-obvious)
commutative, associative multiplication $\od$ on $\cM_X^{\hat\mu}$
which appears in the motivic Thom--Sebastiani Theorem
\cite{DeLo2,DeLo3,Looi}.

In \S\ref{mo4}--\S\ref{mo5} we need the $\Up(P)$ to satisfy
$\Up(P\ot_{\Z_2}Q)=\Up(P)\od\Up(Q)$ for all principal $\Z_2$-bundles
$P,Q\ra X$, but we cannot prove this in $\cM_X^{\hat\mu}$. Our
solution in \S\ref{mo25} is to define a new ring of motives
$\oM^{\hat\mu}_Y$ for each $\K$-scheme $Y$ to be the quotient of
$\bigl(\cM_Y^{\hat\mu},\od\bigr)$ by the ideal generated by pushforwards
$\phi_*\bigl(\Up(P\ot_{\Z_2}Q)-\Up(P)\od\Up(Q)\bigr)$ for all $\K$-scheme morphisms $\phi:X\ra Y$ and principal $\Z_2$-bundles $P,Q\ra X$, and then
$\Up(P\ot_{\Z_2}Q)=\Up(P)\od\Up(Q)$ holds in~$\oM^{\hat\mu}_X$.

Essentially the same issue occurs in Kontsevich and Soibelman
\cite{KoSo1}, which inspired this part of our paper. In defining the
motivic rings $\oM^\mu(X)$ in which their motivic Donaldson--Thomas
invariants take values, in \cite[\S 4.5]{KoSo1} they impose a
complicated relation, which as in \cite[\S 5.1]{KoSo1} implies that
the motivic vanishing cycle $MF_{E,q}^{\rm mot,\phi}$ of a
nondegenerate quadratic form $q$ on a vector bundle $E\ra U$ depends
only on the triple $\bigl(\rank E,\La^{\rm top}E,\det q\bigr)$. As
in \S\ref{mo25}, this implies our relation $\Up(P\ot_{\Z_2}Q)=\Up(P)
\od\Up(Q)$. So Kontsevich and Soibelman's ring $\oM^\mu(X)$ is a
quotient of our ring~$\oM^{\hat\mu}_X$.

Our second main result, Theorem \ref{mo4thm2}, says that if $U,V$
are smooth $\K$-schemes, $f:U\ra\bA^1$, $g:V\ra\bA^1$ are regular,
$X=\Crit(f)$, $Y=\Crit(g)$, and $\Phi:U\ra V$ is an embedding with
$f=g\ci\Phi$ and $\Phi\vert_X:X\ra Y$ an isomorphism, then
$\Phi\vert_X^*\bigl(MF^{\rm mot,\phi}_{V,g}\bigr)=MF^{\rm mot,
\phi}_{U,f}\od\Up(P_\Phi)$ in $\oM^{\hat\mu}_X$, for $P_\Phi\ra X$ a
principal $\Z_2$-bundle parametrizing orientations of the
nondegenerate quadratic form $\Hess g$ on $N_{\sst UV}\vert_X$, with
$N_{\sst UV}\ra U$ the normal bundle of $\Phi(U)$ in $V$. The
analogous result \cite[Th.~5.4]{BBDJS} for perverse sheaves of
vanishing cycles $\PV_{U,f}^\bu$ says that~$\Phi\vert_X^*
(\PV_{V,g}^\bu)\cong\PV_{U,f}^\bu\ot_{\Z_2}P_\Phi$.

For $U,V,f,g,\Phi$ as above, \cite[Prop.~2.23]{Joyc2} shows that
\'etale locally on $V$ we have equivalences $V\sim U\t\bA^n$
identifying $g\sim f\boxplus z_1^2+\cdots+z_n^2$ and $\Phi\sim
\id_U\t 0$. So if we could work \'etale locally, we would have
\begin{align*}
\Phi\vert_X^*&\bigl(MF^{\rm mot,\phi}_{V,g}\bigr)\sim
(\id_X\t 0)^*\bigl(MF^{\rm mot,\phi}_{U\t\bA^n,f\boxplus
z_1^2+\cdots+z_n^2}\bigr)\\
&\quad=MF^{\rm mot,\phi}_{U,f}\bd MF^{\rm mot,\phi}_{\bA^n,z_1^2+\cdots+
z_n^2}=MF^{\rm mot,\phi}_{U,f}\bd 1_{\{0\}}=MF^{\rm mot,\phi}_{U,f},
\end{align*}
using the motivic Thom--Sebastiani theorem in the second step.
However, for motives we must work Zariski locally, so we need a more
complicated proof involving the (\'etale locally trivial) correction
factor $\Up(P_\Phi)$. In singularity theory, passing from $f$ to
$f\boxplus z_1^2+\cdots+z_n^2$ is known as {\it stabilization}, so
Theorem \ref{mo4thm2} studies the behaviour of motivic vanishing
cycles under stabilization.

Our third main result, Theorem \ref{mo5thm5}, concerns a new class
of geometric objects called {\it d-critical loci}, introduced in
Joyce \cite{Joyc2}, and explained in \S\ref{mo51}. An (algebraic)
d-critical locus $(X,s)$ over $\K$ is a $\K$-scheme $X$ with a
section $s$ of a certain natural sheaf $\cSz_X$ on $X$. A d-critical
locus $(X,s)$ may be written Zariski locally as a critical locus
$\Crit(f:U\ra\bA^1)$ of a regular function $f$ on a smooth
$\K$-scheme $U$, and $s$ records some information about $U,f$ (in
the notation of our first main result, $s$ remembers $f^{(2)}$).
There is also a complex analytic version.

Algebraic d-critical loci are classical truncations of the {\it
derived critical loci\/} (more precisely, $-1$-{\it shifted
symplectic derived schemes\/}) introduced in derived algebraic
geometry by Pantev, To\"en, Vaqui\'e and Vezzosi \cite{PTVV}.
Theorem \ref{mo5thm5} roughly says that if $(X,s)$ is an algebraic
d-critical locus over $\K$ with an `orientation', then we may define
a natural motive $MF_{X,s}$ in $\oM^{\hat\mu}_X$, such that if
$(X,s)$ is locally modelled on $\Crit(f:U\ra\bA^1)$ then $MF_{X,s}$
is locally modelled on $MF_{U,f}^{\rm mot,\phi}\od\Up(P)$, where
$P\ra X$ is a principal $\Z_2$-bundle relating the `orientations' on
$(X,s)$ and $\Crit(f)$. The proof uses Theorem~\ref{mo4thm2}.

Bussi, Brav and Joyce \cite{BBJ} prove Darboux-type theorems for the
$k$-shifted symplectic derived schemes of Pantev et al.\
\cite{PTVV}, and use them to construct a truncation functor from
$-1$-shifted symplectic derived schemes to algebraic d-critical
loci. Combining this with results of \cite{BBDJS,Joyc2,PTVV} and
this paper gives new results on categorifying Donaldson--Thomas
invariants of Calabi--Yau 3-folds, and on motivic Donaldson--Thomas
invariants. In particular, as we explain in \S\ref{mo52}, Kontsevich
and Soibelman \cite{KoSo1} wish to associate a motivic Milnor fibre
to each point of the moduli $\K$-schemes $\cM_{\rm st}^\al(\tau)$ of
$\tau$-stable coherent sheaves on a Calabi--Yau $3$-fold over $\K$.
The issue of how these vary in families over the base $\cM_{\rm
st}^\al(\tau)$ is not really addressed in \cite{KoSo1}. Our paper
answers this question.

In the rest of the paper, \S\ref{mo2} introduces motivic Milnor
fibres and motivic vanishing cycles, and \S\ref{mo3}--\S\ref{mo5}
state and prove Theorems \ref{mo3thm1}, \ref{mo4thm2} and~\ref{mo5thm5}.

Ben-Bassat, Brav, Bussi and Joyce \cite{BBBJ} will extend the
results of \cite{BBJ,BBDJS} and this paper from (derived) schemes to
(derived) Artin stacks.
\medskip

\noindent{\bf Conventions.} Throughout we work over a base field
$\K$ which is algebraically closed and of characteristic zero, for
instance $\K=\C$. All $\K$-schemes are assumed to be of finite type,
unless we explicitly say otherwise. We discuss extending our results to $\K$-schemes locally of finite type in Remark~\ref{mo5rem}.
\medskip

\noindent{\bf Acknowledgements.} We would like to thank Ben Davison,
Johannes Nicaise, Le Quy Thuong and Davesh Maulik for useful conversations, and the referee for helpful comments. This research was supported by EPSRC Programme Grant EP/I033343/1 on `Motivic invariants and categorification'.

\section{Background material}
\label{mo2}

We begin by discussing rings of motives, motivic Milnor fibres, and
motivic vanishing cycles. Sections \ref{mo21}--\ref{mo23} broadly
follow Denef and Loeser \cite{DeLo3,DeLo4,DeLo1,DeLo2} and Looijenga
\cite{Looi}. Sections \ref{mo24}--\ref{mo25} contain some new
material, much of which is based on ideas in Kontsevich and
Soibelman~\cite{KoSo1}.

\subsection{\texorpdfstring{Rings of motives on a $\K$-scheme $X$}{Rings of motives on a 𝕂-scheme X}}
\label{mo21}

We define ($\hat\mu$-equivariant) Grothendieck groups of schemes.

\begin{dfn} Let $X$ be a $\K$-scheme (always assumed of finite
type). By an $X$-scheme we mean a $\K$-scheme $S$ together with a
morphism $\Pi_S^X:S\ra X$. The $X$-schemes form a category $\Sch_X$,
with morphisms $\al:S\ra T$ satisfying $\Pi_S^X=\Pi_T^X\ci\al$.

Write $K_0(\Sch_X)$ for the {\it Grothendieck group\/} of $\Sch_X$.
It is the abelian group generated by symbols $[S]$, for $S$ an
$X$-scheme, with relations $[S]=[T]$ if $S\cong T$ in $\Sch_X$, and
$[S]=[T]+[S\sm T]$ if $T\subseteq S$ is a closed $X$-subscheme.
There is a natural commutative ring structure on $K_0(\Sch_X)$, with
$[S]\cdot[T]=[S\t_X T]$.

Write $\bA^1_X$ for the $X$-scheme $\pi_X:\bA^1\t X\ra X$, and
define $\bL=[\bA^1_X]$ in $K_0(\Sch_X)$. We denote by
$\cM_X=K_0(\Sch_X)[\bL^{-1}]$ the ring obtained from $K_0(\Sch_X)$
by inverting $\bL$. When $X=\Spec\K$ we write $K_0(\Sch_\K),\cM_\K$
instead of $K_0(\Sch_X),\cM_X$. If $X,Y$ are $\K$-schemes there are
natural {\it external products\/}
\e
\begin{gathered}
\boxt:K_0(\Sch_X)\t K_0(\Sch_Y)\ra K_0(\Sch_{X\t Y}),\;\>
\boxt:\cM_X\t\cM_Y\ra\cM_{X\t Y}\\
\text{with}\;\> \bigl[\Pi_S^X:S\ra X\bigr]\boxt\bigl[\Pi_T^Y:T\ra
Y\bigr]=\bigl[\Pi_S^X\t\Pi_T^Y:S\t T\ra X\t Y\bigr].
\end{gathered}
\label{mo2eq1}
\e

If $f:X\ra Y$ is a morphism of $\K$-schemes, we define {\it
pushforwards\/} $f_*:K_0(\Sch_X)\ra K_0(\Sch_Y)$,
$f_*:\cM_X\ra\cM_Y$ and {\it pullbacks\/} $f^*:K_0(\Sch_Y)\ra
K_0(\Sch_X)$, $f^*:\cM_Y\ra\cM_X$ by $f_*\bigl(\bigl[ \Pi_S^X:S\ra
X\bigr]\bigr)=\bigl[f\ci \Pi_S^X:S\ra Y\bigr]$ and
$f^*\bigl(\bigl[\Pi_S^Y:S\ra Y\bigr]\bigr)=\bigl[\pi_X:S\t_{\Pi_S^Y,
Y,f}X\ra X\bigr]$. They have the usual functorial properties.
\label{mo2def1}
\end{dfn}

\begin{dfn} For $n=1,2,\ldots,$ write $\mu_n$ for the group
of all $n^{\rm th}$ roots of unity in $\K$, which is assumed
algebraically closed of characteristic zero as in \S\ref{mo1}, so
that $\mu_n\cong\Z_n$. Then $\mu_n$ is the $\K$-scheme
$\Spec(\K[x]/(x^n-1))$. The $\mu_n$ form a projective system, with
respect to the maps $\mu_{nd}\ra\mu_n$ mapping $x\mapsto x^d$ for
all $d,n=1,2,\ldots.$ Define the group $\hat\mu$ to be the
projective limit of the $\mu_n$. Note that $\hat\mu$ is not a
$\K$-scheme, but is a pro-scheme.

Let $S$ be an $X$-scheme. A {\it good\/ $\mu_n$-action on\/} $S$ is
a group action $\si_n:\mu_n\t S\ra S$ which is a morphism of
$X$-schemes, such that each orbit is contained in an affine
subscheme of $S$. This last condition is automatically satisfied
when $S$ is quasi-projective. A {\it good\/ $\hat\mu$-action on\/}
$S$ is a group action $\hat\si:\hat\mu\t S\ra S$ which factors
through a good $\mu_n$-action, for some $n$. We will write
$\hat\io:\hat\mu\t S\ra S$ for the trivial $\hat\mu$-action on $S$,
for any $S$, which is automatically good.

The {\it monodromic Grothendieck group} $K^{\hat\mu}_0(\Sch_X)$ is
the abelian group generated by symbols $[S,\hat\si]$, for $S$ an
$X$-scheme and $\hat\si:\hat\mu\t S\ra S$ a good $\hat\mu$-action,
with the relations:
\begin{itemize}
\setlength{\itemsep}{0pt}
\setlength{\parsep}{0pt}
\item[(i)] $[S,\hat\si]=[T,\hat\tau]$ if $S,T$ are isomorphic as
$X$-schemes with $\hat\mu$-actions;
\item[(ii)] $[S,\hat \si]=[T,\hat\si\vert_T]+[S\sm
T,\hat\si\vert_{S\sm T}]$ if $T\subseteq S$ is a closed,
$\hat\mu$-invariant $X$-subscheme of $S$; and
\item[(iii)] $[S\t\bA^n,\hat\si\t\hat\tau_1]=
[S\t \bA^n,\hat\si\t\hat\tau_2]$ for any linear $\hat\mu$-actions
$\hat\tau_1,\hat\tau_2$ on $\bA^n$.
\end{itemize}
There is a natural commutative ring structure on $K^{\hat\mu}_0
(\Sch_X)$ with multiplication `$\,\cdot\,$' defined by
$[S,\hat\si]\cdot[T,\hat\tau]=[S\t_XT,\hat\si \t\hat\tau]$. Write
$\bL=[\bA^1_X,\hat\io]$ in $K_0^{\hat\mu}(\Sch_X)$. Define
$\cM^{\hat\mu}_X= K_0^{\hat\mu}(\Sch_X)[\bL^{-1}]$ to be the ring
obtained from $K_0^{\hat\mu}(\Sch_X)$ (with multiplication
`$\,\cdot\,$') by inverting $\bL$. When $X=\Spec\K$ we write
$K^{\hat\mu}_0(\Sch_\K),\cM^{\hat\mu}_\K$ instead of
$K^{\hat\mu}_0(\Sch_X),\cM^{\hat\mu}_X$.

If $X,Y$ are $\K$-schemes there are natural {\it external
products\/}
\begin{gather}
\boxt:K^{\hat\mu}_0(\Sch_X)\t K^{\hat\mu}_0(\Sch_Y)\ra
K^{\hat\mu}_0(\Sch_{X\t Y}),\;\>
\boxt:\cM^{\hat\mu}_X\t\cM^{\hat\mu}_Y \ra \cM^{\hat\mu}_{X\t Y}
\nonumber\\
\text{with}\;\> [S\ra X,\hat\si]\boxt[T\ra Y,\hat\tau]=[S\t T\ra X\t
Y,\hat\si\t\hat\tau].
\label{mo2eq2}
\end{gather}
Pushforwards and pullbacks work for the rings $\cM^{\hat\mu}_X$ in
the obvious way.

There are also natural morphisms of commutative rings
\begin{align*}
i_X:K_0(\Sch_X)&\longra K_0^{\hat\mu}(\Sch_X),&
i_X:\cM_X&\longra\cM^{\hat\mu}_X, & i_X:[S]&\longmapsto[S,\hat\io],\\
\Pi_X:K_0^{\hat\mu}(\Sch_X)&\longra K_0(\Sch_X),&
\Pi_X:\cM^{\hat\mu}_X&\longra\cM_X, & \Pi_X:[S,\hat\si]&\longmapsto[S].
\end{align*}

\label{mo2def2}
\end{dfn}

Following Looijenga \cite[\S 7]{Looi} and Denef and Loeser \cite[\S
5]{DeLo3}, we introduce a second commutative, associative
multiplication `$\od$' on $K_0^{\hat\mu}(\Sch_X),\cM^{\hat\mu}_X$
(written `$*$' in \cite{Looi,DeLo3}).

\begin{dfn} Let $X$ be a $\K$-scheme, and
$[S,\hat\si],[T,\hat\tau]$ generators of $K^{\hat\mu}_0(\Sch_X)$ or
$\cM^{\hat\mu}_X$. Then there exists $n\ge 1$ such that the
$\hat\mu$-actions $\hat\si,\hat\tau$ on $S,T$ factor through
$\mu_n$-actions $\si_n,\tau_n$. Define $J_n$ to be the Fermat curve
\begin{equation*}
J_n=\bigl\{(t,u)\in(\bA^1\sm\{0\})^2:t^n+u^n=1\bigr\}.
\end{equation*}
Let $\mu_n\t\mu_n$ act on $J_n\t (S\t_X T)$ by
\begin{equation*}
(\al,\al')\cdot\bigl((t,u),(v,w)\bigr)=\bigl((\al\cdot t,\al'\cdot
u),(\si_n(\al)(v),\tau_n(\al')(w))\bigr).
\end{equation*}
Write $J_n(S,T)=(J_n\t (S\t_X T))/(\mu_n\t\mu_n)$ for the quotient
$\K$-scheme, and define a $\mu_n$-action $\up_n$ on $J_n(S,T)$ by
\begin{equation*}
\up_n(\al)\bigl((t,u),v,w\bigr)(\mu_n\t\mu_n)= \bigl((\al\cdot
t,\al\cdot u),v,w\bigr)(\mu_n\t\mu_n).
\end{equation*}
Let $\hat\up$ be the induced good $\hat\mu$-action on $J_n(S,T)$,
and set
\e
[S,\hat\si]\od[T,\hat\tau]=(\bL-1)\cdot\bigl[(S\t_X
T)/\mu_n,\hat\io\bigr]- \bigl[J_n(S,T),\hat\up\bigr]
\label{mo2eq3}
\e
in $K_0^{\hat\mu}(\Sch_X)$ or $\cM^{\hat\mu}_X$. This turns out to
be independent of $n$, and defines commutative, associative products
$\od$ on $K_0^{\hat\mu}(\Sch_X),\cM^{\hat\mu}_X$.

Now let $X,Y$ be $\K$-schemes. As for \eq{mo2eq1}--\eq{mo2eq2}, we
define products
\begin{equation*}
\bd:K^{\hat\mu}_0(\Sch_X)\t K^{\hat\mu}_0(\Sch_Y)\ra
K^{\hat\mu}_0(\Sch_{X\t Y}),\;\>
\bd:\cM^{\hat\mu}_X\t\cM^{\hat\mu}_Y \ra \cM^{\hat\mu}_{X\t Y}
\end{equation*}
by following the definition above for $[S,\hat\si]\in
K^{\hat\mu}_0(\Sch_X)$ and $[T,\hat\tau]\in K^{\hat\mu}_0(\Sch_Y)$,
but taking products $S\t T$ rather than fibre products $S\t_XT$.
These $\bd$ are also commutative and associative in the appropriate
sense.

Taking $Y=\Spec\K$ and using $X\t\Spec\K\cong X$, we see that $\bd$
makes $K^{\hat\mu}_0(\Sch_X),\cM^{\hat\mu}_X$ into modules over
$K^{\hat\mu}_0(\Sch_\K),\cM^{\hat\mu}_\K$.

For generators $[S,\hat\si]$ and $[T,\hat\io]=i_X([T])$ in
$K_0^{\hat\mu}(\Sch_X)$ or $\cM^{\hat\mu}_X$ where $[T,\hat\io]$ has
trivial $\hat\mu$-action $\hat\io$, one can show that
$[S,\hat\si]\od[T,\hat\io]=[S,\hat\si]\cdot[T,\hat\io]$. Thus $i_X$
is a ring morphism $\bigl(K_0(\Sch_X),\cdot\bigr)\ra
\bigl(K_0^{\hat\mu}(\Sch_X),\od\bigr)$. However, $\Pi_X$ is not a
ring morphism $\bigl(K_0^{\hat\mu}(\Sch_X),\od\bigr)\ra
\bigl(K_0(\Sch_X),\cdot\bigr)$. Since $\bL=[\bA^1_X,\hat\io]$ this
implies that $M\cdot\bL=M\od\bL$ for all $M$ in
$K_0^{\hat\mu}(\Sch_X)$ or $\cM^{\hat\mu}_X$.
\label{mo2def3}
\end{dfn}

\begin{rem} Our principal references for this section
\cite{DeLo1,DeLo2,DeLo3,Looi} work in terms not of $\K$-schemes but
$\K$-{\it varieties}, by which they mean reduced, separated
$\K$-schemes of finite type, not necessarily irreducible.

We assume that all our schemes are of finite type unless we
explicitly say otherwise. The reduced and separated conditions in
\cite{DeLo1,DeLo2,DeLo3,Looi} are not actually needed, but are
included as an aesthetic choice, as mathematicians in some areas
prefer varieties to schemes. However, our results (Theorem
\ref{mo3thm1} for instance) concern non-reduced schemes, and would
be false if we replaced schemes by varieties, so we have
standardized on working with finite type schemes.

To see that dropping the reduced and separated conditions changes
nothing, note that if $S$ is an $X$-scheme then the reduced
$X$-subscheme $S^\red\subseteq S$ is closed with $S\sm S^\red=\es$,
so $[S]=[S^\red]$ in $K_0(\Sch_X)$. Also, any non-separated scheme
can be cut into finitely many separated schemes using the relation
$[S]=[T]+[S\sm T]$ for $T\subseteq S$ closed.
Therefore~$K_0(\Var_X)\cong K_0(\Sch_X)$.
\label{mo2rem1}
\end{rem}

\begin{ex} Define elements $\bL^{1/2}$ in
$K^{\hat\mu}_0(\Sch_X)$ and $\cM^{\hat\mu}_X$ by
\e
\bL^{1/2}=[X,\hat\io]-[X\t\mu_2,\hat\rho],
\label{mo2eq4}
\e
where $[X,\hat\io]$ with trivial $\hat\mu$-action $\hat\io$ is the
identity in $K^{\hat\mu}_0(\Sch_X),\cM^{\hat\mu}_X$, and
$X\t\mu_2=X\t\{1,-1\}$ is two copies of $X$ with nontrivial
$\hat\mu$-action $\hat\rho$ induced by the left action of $\mu_2$ on
itself, exchanging the two copies of $X$. Applying \eq{mo2eq3} with
$n=2$, we can show that $\bL^{1/2}\od \bL^{1/2}=\bL$. Thus,
$\bL^{1/2}$ in \eq{mo2eq4} is a square root for $\bL$ in the rings
$\bigl(K_0^{\hat\mu}(\Sch_X),\od\bigr)$ and
$\bigl(\cM^{\hat\mu}_X,\od\bigr)$. Note that $\bL^{1/2}\cdot
\bL^{1/2}\ne\bL$.

We can now define unique elements $\bL^{n/2}$ in
$K_0^{\hat\mu}(\Sch_X)$ for all $n=0,1,2,\ldots$ and
$\bL^{n/2}\in\cM^{\hat\mu}_X=K_0^{\hat\mu}(\Sch_X)[\bL^{-1}]$ for
all $n\in\Z$ in the obvious way, such that $\bL^{m/2}\od
\bL^{n/2}=\bL^{(m+n)/2}$ for all~$m,n$.
\label{mo2ex1}
\end{ex}

\subsection{Arc spaces and the motivic zeta function}
\label{mo22}

\begin{dfn} Let $U$ be a $\K$-scheme. For each
$n\in\N=\{0,1,\ldots\}$ we consider the {\it space\/ $\cL_n(U)$ of
arcs modulo $t^{n+1}$ on\/} $U$. This is a $\K$-scheme, whose
$K$-points, for any field $K$ containing $\K$, are the
$K[t]/t^{n+1}K[t]$-points of $U$. For $n\le m$ there are projections
$\pi_m^n:\cL_m(U)\ra\cL_n(U)$ mapping $\al\mapsto\al\mod t^{m+1}$.
Note that $\cL_0(U)=U$ and $\cL_1(U)$ is the tangent sheaf of $U$.
\label{mo2def4}
\end{dfn}

\begin{ex} If $U\subseteq\bA^m$ is a $\K$-scheme defined
by equations $f_k(x_1,\ldots,x_m)=0$ for $k=1,\ldots,l$, then
$\cL_n(U)\subseteq\bA^{m(n+1)}$ is given in the variables $x_i^j$
for $i=1,\ldots,m$ and $j=0,\ldots,n$ by the equations
\begin{equation*}
f_k\bigl(x_1^0\!+\!x_1^1t\!+\!\cdots\!+\!x_1^nt^n,\ldots,
x_m^0\!+\!x_1^1t\!+\!\cdots\!+\!x_m^nt^n\bigr)\equiv 0\!\!\!\mod
t^{n+1},\;\> k\!=\!1,\ldots,l.
\end{equation*}

\label{mo2ex2}
\end{ex}

We now recall the motivic zeta function from Denef and Loeser
\cite[\S 3.2]{DeLo3}.

\begin{dfn} Let $U$ be a smooth $\K$-scheme and $f:U\ra \bA^1$ a
non-constant regular function. Then $f$ induces morphisms
$\cL_n(f):\cL_n(U)\ra\cL_n(\bA^1)$ for $n\ge 1$. Any point $\be$ of
$\cL_n(\bA^1)$ yields a power series $\be(t)\in K[[t]]/t^{n+1}$, for
some field $K$ containing $\K$. So we define maps
\begin{equation*}
{\rm ord}_t:\cL_n(\bA^1) \ra \{ 0,1,\cdots,n,\iy\},
\end{equation*}
with ${\rm ord}_t\be$ the largest $m$ such that $t^m$ divides
$\be(t)$. Set
\begin{equation*}
\fU_n:=\bigl\{\al\in \cL_n(U):{\rm ord}_t\bigl(\cL_n(f)(\al)
\bigr)=n\bigr\}.
\end{equation*}
This is a locally closed subscheme of $\cL_n(U)$. Note that $\fU_n$
is actually a $U_0$-scheme, through the morphism $\pi^n_0:\cL_n(U)
\ra U$, where $U_0$ denotes the locus of $f=0$ in $U$. Indeed
$\pi^n_0(\fU_n) \subset U_0$, since $n\geq 1$. We consider the
morphism
\begin{equation*}
\bar f_n:\fU_n\ra\bG_m:=\bA^1\sm\{0\},
\end{equation*}
sending a point $\al$ in $\fU_n$ to the coefficient of $t^n$ in
$\cL_n(f)(\al)$. There is a natural action of $\bG_m$ on $\fU_n$ given by
$a \cdot \al(t) = \al(at)$, where $\al(t)$ is the vector of power
series corresponding to $\al$ in some local coordinate system. Since
$\bar f_n(a \cdot \al) = a^n \bar f_n(\al)$ it follows that $\bar
f_n$ is an \'etale locally trivial fibration.

We denote by $\fU_{n,1}$ the fibre $\bar f^{-1}_n(1)$. Note that the
action of $\bG_m$ on $\fU_n$ induces a good action $\rho_n$ of
$\mu_n$ (and hence a good action $\hat\rho$ of $\hat\mu$) on
$\fU_{n,1}$. Since $\bar f_n$ is a locally trivial fibration, the
$U_0$-scheme $\fU_{n,1}$ and the action of $\mu_n$ on it, completely
determines both the scheme $\fU_n$ and the morphism
\begin{equation*}
(\bar f_n,\pi^n_0) : \fU_n \longra \bG_m \t U_0.
\end{equation*}
Indeed it is easy to verify that $\fU_n$, as a $(\bG_m \t
U_0)$-scheme, is isomorphic to the quotient of $\fU_{n,1} \t \bG_m$
under the $\mu_n$-action defined by $a(\al,b)=(a \al,a^{-1}b)$.

The {\it motivic zeta function\/} $Z_f(T)\in
\cM^{\hat\mu}_{U_0}[[T]]$ of $f:U\ra\bA^1$ is the power series in
$T$ over $\cM^{\hat\mu}_{U_0}$ defined by
\e
Z_f(T):=\ts\sum\limits_{n\geq 1} \, \bigl[\fU_{n,1}\ra U_0,
\hat\rho\bigr]\,\bL^{-n\dim U}\,T^n.
\label{mo2eq5}
\e
\label{mo2def5}
\end{dfn}

We will recall a formula for $Z_f(T)$ in terms of resolution of
singularities.

\begin{dfn} Let $U$ be a smooth $\K$-scheme and $f:U\ra \bA^1$ a
non-constant regular function. By Hironaka's Theorem \cite{Hiro} we
can choose a {\it resolution\/} $(V,\pi)$ of $f$. That is, $V$ is a
smooth $\K$-scheme and $\pi:V\ra U$ a proper morphism, such that
$\pi\vert_{V\sm \pi^{-1}(U_0)}:V\sm \pi^{-1}(U_0)\ra U \sm U_0$ is
an isomorphism for $U_0=f^{-1}(0)$, and $\pi^{-1}(U_0)$ has only
normal crossings as a $\K$-subscheme of~$V$.

Write $E_i$, $i\in J$ for the irreducible components of
$\pi^{-1}(U_0)$. For each $i\in J$, denote by $N_i$ the multiplicity
of $E_i$ in the divisor of $f\ci\pi$ on $V$, and by $\nu_i - 1$ the
multiplicity of $E_i$ in the divisor of $\pi^*(\d x)$, where $\d x$
is a local non vanishing volume form at any point of $\pi(E_i)$. For
$I \subset J$, we consider the smooth $\K$-scheme~$E^\ci_I=
\bigl(\bigcap_{i \in I}E_i\bigr) \sm \bigl(\bigcup_{j \in J \sm I}
E_j\bigr)$.

Let $m_I=\gcd(N_i)_{i \in I}$. We introduce an unramified Galois
cover $\ti E^\ci_I$ of $E^\ci_I$, with Galois group $\mu_{m_I}$, as
follows. Let $V'$ be an affine Zariski open subset of $V$, such
that, on $V'$, $f \ci \pi = uv^{m_I}$, with $u:V'\ra\bA^1\sm\{0\}$
and $v:V'\ra\bA^1$. Then the restriction of $\ti E^\ci_I$ above
$E^\ci_I \cap V'$, denoted by $\ti E_I^\ci \cap V'$, is defined as
\begin{equation*}
\ti E_I^\ci \cap V'=\bigl\{(z,w)\in\bA^1\t(E^\ci_I \cap V'):
z^{m_I}=u(w)^{-1}\bigr\}.
\end{equation*}
Note that $E^\ci_I$ can be covered by such affine open subsets $V'$
of $V$. Gluing together the covers $\ti E^\ci_I \cap V'$ in the
obvious way, we obtain the cover $\ti E^\ci_I$ of $E^\ci_I$ which
has a natural $\mu_{m_I}$-action $\rho_I$, obtained by multiplying
the $z$-coordinate with the elements of $\mu_{m_I}$. This
$\mu_{m_I}$-action on $\ti E^\ci_I$ induces a $\hat\mu$-action
$\hat\rho_I$ on $\ti E^\ci_I$ in the obvious way.
\label{mo2def6}
\end{dfn}

Denef and Loeser \cite{DeLo4} and Looijenga \cite{Looi} prove that
$Z_f(T)$ is rational:

\begin{thm} In the situation of Definition\/ {\rm\ref{mo2def6},} in
$\cM^{\hat\mu}_{U_0}[[T]]$ we have:
\e
Z_f(T)=\sum_{\es \ne I \subset J} \, (\bL - 1)^{\md{I}-1} \,
\bigl[\ti E^\ci_I\ra U_0, \hat\rho_I\bigr]\, \prod_{i \in I}
\frac{\bL^{-\nu_i} T^{N_i}}{1 - \bL^{-\nu_i} T^{N_i}}\,.
\label{mo2eq6}
\e
\label{mo2thm1}
\end{thm}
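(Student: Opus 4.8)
This is the rationality theorem of Denef--Loeser \cite{DeLo4} and Looijenga \cite{Looi}, and the plan is to prove \eq{mo2eq6} by the motivic change of variables formula applied to the resolution $(V,\pi)$, followed by an explicit computation on the normal crossings divisor $\pi^{-1}(U_0)$.

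First I would reinterpret $Z_f(T)$ as a motivic integral over the arc space $\cL(U)$. By construction $\fU_{n,1}$ classifies $n$-jets of arcs $\al$ on $U$ with ${\rm ord}_t(f\ci\al)=n$ and with the coefficient of $t^n$ in $f\ci\al$ equal to $1$; such arcs are not contained in $U_0$. Since $\pi$ is proper and restricts to an isomorphism over $U\sm U_0$, the valuative criterion shows each such arc lifts uniquely to an arc on $V$, so $Z_f(T)$ is computed by an integral over $\cL(V)$. Here Kontsevich's change of variables formula contributes a Jacobian factor measured by the order of vanishing of $\pi^*(\d x)$ along the arc: on the locus of arcs whose origin lies on exactly the components $E_i$, $i\in I$, and which meet $E_i$ with contact order $k_i\ge 1$, this factor is $\bL^{-\sum_{i\in I}(\nu_i-1)k_i}$. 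At finite truncation level this requires the standard control of the failure of surjectivity of $\pi$ on jet spaces in terms of the Jacobian order; this is the principal technical input.

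Next I would carry out the local computation. Stratify arcs on $V$ by the pair $(I,(k_i)_{i\in I})$ with $\es\ne I\subseteq J$ and $k_i\ge 1$. In an affine chart $V'$ as in Definition~\ref{mo2def6}, where $f\ci\pi=u\prod_{i\in I}v_i^{N_i}$ with $u:V'\ra\bG_m$ and the $v_i$ part of a coordinate system cutting out the $E_i$, an arc $\al$ in this stratum has ${\rm ord}_t(f\ci\pi)=\sum_{i\in I}N_ik_i=:n$ and the coefficient of $t^n$ in $f\ci\pi\ci\al$ equals $u(w)\prod_{i\in I}c_i^{N_i}$, where $w\in E^\ci_I$ is the origin and $c_i\in\bG_m$ is the leading coefficient of $v_i\ci\al$. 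Imposing that this coefficient be $1$ and remembering the $\mu_n$-action $\rho_n$ obtained by rescaling arcs via $a\cdot\al(t)=\al(at)$, the resulting $\hat\mu$-equivariant $U_0$-motive is $(\bL-1)^{\md{I}-1}\,\bigl[\ti E^\ci_I\ra U_0,\hat\rho_I\bigr]$ times a power of $\bL$ coming from the unconstrained higher jet coefficients: the cover $z^{m_I}=u(w)^{-1}$ with its $\mu_{m_I}$-action arises precisely because solving $u(w)\prod_i c_i^{N_i}=1$ along $E^\ci_I$ involves an $m_I$-th root of $u^{-1}$, the torus $(\bG_m)^{\md{I}-1}$ accounts for the remaining freedom in $(c_i)$, and the residual $\mu_n$-action descends to exactly $\hat\rho_I$ on the cover. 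This identification, including the matching of monodromy actions, is the delicate combinatorial heart of the argument.

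Finally, summing the stratum $(I,(k_i))$ over all $(k_i)\in(\Z_{\ge1})^I$ produces the product of geometric series $\prod_{i\in I}\sum_{k\ge1}(\bL^{-\nu_i}T^{N_i})^k=\prod_{i\in I}\frac{\bL^{-\nu_i}T^{N_i}}{1-\bL^{-\nu_i}T^{N_i}}$, the various $\bL$-powers from the change of variables and the jet-coefficient count being arranged to absorb the normalisation $\bL^{-n\dim U}$ in \eq{mo2eq5}; summing over all $\es\ne I\subseteq J$ then gives \eq{mo2eq6}. Besides the change of variables formula in the $\hat\mu$-equivariant setting, the main obstacles are the careful bookkeeping of the monodromy --- verifying stratum by stratum that the $\mu_n$-action on $\fU_{n,1}$ coming from $a\cdot\al(t)=\al(at)$ matches the $\mu_{m_I}$-action $\hat\rho_I$ defining $\ti E^\ci_I$ --- and checking that the right-hand side of \eq{mo2eq6} is independent of the chosen resolution $(V,\pi)$ and of the gluing of the local charts $V'$.
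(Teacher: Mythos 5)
The paper gives no proof of Theorem \ref{mo2thm1}: it is quoted directly from Denef--Loeser \cite{DeLo4} and Looijenga \cite{Looi}. Your outline is a faithful reconstruction of their standard argument --- unique arc lifting via properness of $\pi$, the motivic change of variables with Jacobian exponent $\sum_{i\in I}(\nu_i-1)k_i$, stratification by contact orders, the local computation producing $(\bL-1)^{|I|-1}\bigl[\ti E^\ci_I,\hat\rho_I\bigr]$ from the condition $u(w)\prod_ic_i^{N_i}=1$, and the geometric series --- and the $\bL$-power bookkeeping you describe does check out against the normalisation $\bL^{-n\dim U}$ in \eq{mo2eq5}.
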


\subsection{Motivic nearby and vanishing cycles}
\label{mo23}

Following Denef and Loeser \cite{DeLo3,DeLo4,DeLo1}, we state the
main definition of the section:

\begin{dfn} Let $U$ be a smooth $\K$-scheme and $f:U\ra\bA^1$ a
regular function. If $f$ is non-constant, expanding the rational
function $Z_f(T)$ as a power series in $T^{-1}$ and taking minus its
constant term, yields a well defined element of
$\cM_{U_0}^{\hat\mu}$, which we call {\it motivic nearby cycle} of
$f$. Namely,
\e
MF_{U,f}^{\rm mot}:=-\lim_{T\ra\iy}Z_f(T)=
\sum_{\es\ne I\subseteq J}(1-\bL)^{|I|-1}
\bigl[\ti E^\ci_I\ra U_0, \hat\rho_I\bigr].
\label{mo2eq7}
\e
If $f$ is constant we set $MF_{U,f}^{\rm mot}=0$. We write
\begin{equation*}
MF_{U,f}^{\rm mot}(x):={\rm Fibre}_x\bigl(MF_{U,f}^{\rm mot}\bigr)
\in \cM^{\hat\mu}_\K
\end{equation*}
for each $x\in U_0$, which we call the {\it motivic Milnor fibre} of
$f$ at~$x$.

Now let $X=\Crit(f)\subseteq U$, as a closed $\K$-subscheme of $U$,
and $X_0=X\cap U_0$. Consider the restriction $MF_{U,f}^{\rm
mot}\vert_{U_0\sm X_0}$ in $\cM_{U_0\sm X_0}^{\hat\mu}$. In
Definition \ref{mo2def6} we can choose $(V,\pi)$ with
$\pi\vert_{V\sm\pi^{-1}(X_0)}:V\sm\pi^{-1}(X_0)\ra U\sm X_0$ an
isomorphism. Write $D_1,\ldots,D_k$ for the irreducible components
of $\pi^{-1}(U_0\sm X_0)\cong U_0\sm X_0$. They are disjoint as
$\pi^{-1}(U_0\sm X_0)$ is nonsingular. The closures $\ov
D_1,\ldots,\ov D_k$ (which need not be disjoint) are among the
divisors $E_i$, so we write $\ov D_a=E_{i_a}$ for $a=1,\ldots,k$,
with $\{i_1,\ldots,i_k\}\subseteq I$. Clearly $N_{i_a}=\nu_{i_a}=1$
for $a=1,\ldots,k$.

Then in \eq{mo2eq7} the only nonzero contributions to $MF_{U,f}^{\rm
mot}\vert_{U_0\sm X_0}$ are from $I=\{i_a\}$ for $a=1,\ldots,k$,
with $\ti E^\ci_{\{i_a\}}\cong E^\ci_{\{i_a\}}\cong D_a$, and the
$\hat\mu$-action on $\ti E^\ci_{\{i_a\}}$ is trivial as it factors
through the action of $\mu_1=\{1\}$. Hence
\begin{equation*}
MF_{U,f}^{\rm mot}\vert_{U_0\sm X_0}=\ts\sum_{a=1}^k
\bigl[\ti E^\ci_{\{i_a\}},\hat\io\bigr]=\ts\sum_{a=1}^k
\bigl[D_a,\hat\io\bigr]=
\bigl[U_0\sm X_0,\hat\io\bigr].
\end{equation*}
Therefore $[U_0,\hat\io]-MF_{U,f}^{\rm mot}$ is supported on
$X_0\subseteq U_0$, and by restricting to $X_0$ we regard it as an
element of~$\cM_{X_0}^{\hat\mu}$.

As $f\vert_X:X\ra\bA^1$ is locally constant on
$X^\red=\Crit(f)^\red$, $f(X)$ is finite, and $X=\coprod_{c\in
f(X)}X_c$ with $X_c=X\cap U_c$, where $U_c=f^{-1}(c)\subset U$.
Define $MF_{U,f}^{\rm mot,\phi}\in\cM_X^{\hat\mu}$ by
\e
MF_{U,f}^{\rm mot,\phi}\big\vert_{X_c}=\bL^{-\dim U/2}\od
\bigl([U_c,\hat\io]-MF_{U,f-c}^{\rm mot}\bigr)\big\vert_{X_c}\in
\cM^{\hat\mu}_{X_c}
\label{mo2eq8}
\e
for each $c\in f(X)$, where $\bL^{-\dim U/2}\in\cM^{\hat\mu}_{X_c}$
is defined in Example \ref{mo2ex1}, and the product $\od$ in
Definition \ref{mo2def3}. We call $MF_{U,f}^{\rm mot,\phi}$ the {\it
motivic vanishing cycle\/} of $f$.

Sometimes it is convenient to regard $MF_{U,f}^{\rm mot,\phi}$ as an
element of $\cM_U^{\hat\mu}$ supported on $X$, via the inclusion
$\cM_X^{\hat\mu}\hookra\cM_U^{\hat\mu}$. Also, for each $x\in X$ we
set
\e
MF_{U,f}^{\rm mot,\phi}(x):= \bL^{-\dim U/2}\od\bigl(1-MF_{U,f}^{\rm
mot}(x)\bigr) \in \cM^{\hat\mu}_\K.
\label{mo2eq9}
\e

In \S\ref{mo3}--\S\ref{mo4} we will use the fact that if $U,V$ are
smooth $\K$-schemes, $f:U\ra\bA^1$, $g:V\ra\bA^1$ are regular,
$X=\Crit(f)$, $Y=\Crit(g)$, and $\Phi:U\ra V$ is \'etale with
$f=g\ci\Phi$, so that $\Phi\vert_X:X\ra Y$ is \'etale, then
$\Phi\vert_X^*\bigl(MF_{V,g}^{\rm mot,\phi}\bigr)=MF_{U,f}^{\rm
mot,\phi}$.
\label{mo2def7}
\end{dfn}

\begin{rem}{\bf(a)} Because of \eq{mo2eq6} the right hand side of
\eq{mo2eq7} is independent of the choice of resolution $(V,\pi)$,
although a priori this is not at all obvious.
\smallskip

\noindent{\bf(b)} As in \cite[\S 3.5]{DeLo3}, one should regard
$MF_{U,f}^{\rm mot}(x)$ as the correct motivic incarnation of the
{\it Milnor fibre\/} $MF_{U,f}(x)$ of $f$ at $x$ when $\K=\C$, which
is in itself not at all motivic. This is indeed true for the Hodge
realization \cite[Th.~3.10]{DeLo3}. Moreover, using the perverse
sheaf notation of Brav et al.\ \cite[\S 2]{BBDJS}, $MF_{U,f}^{\rm
mot}$ is the virtual motivic incarnation of the {\it complex of
nearby cycles\/} $\psi_f(\Q_U)$ of $U,f$ in $D^b_c(U_0)$, and
$MF_{U,f}^{\rm mot,\phi}$ as the virtual motivic incarnation of the
{\it perverse sheaf of vanishing cycles\/}
$\PV^\bu_{U,f}\in\Perv(X)$ of $U,f$, defined by
\e
\PV^\bu_{U,f}\vert_{X_c}=\phi_{f-c}[-1]\bigl(\Q_U[\dim U]\bigr)\quad
\text{for all $c\in f(X)$.}
\label{mo2eq10}
\e
Note the analogy between \eq{mo2eq8} and \eq{mo2eq10}.
\smallskip

\noindent{\bf(c)} Our formulae \eq{mo2eq8}--\eq{mo2eq9} are based on
Denef and Loeser\cite[Not.~3.9]{DeLo3}, but with a different
normalization, since Denef and Loeser have $(-1)^{\dim U}$ rather
than $\bL{}^{-\dim U/2}$. As in Examples \ref{mo2ex3} and
\ref{mo2ex4} below, our normalization ensures that $MF^{\rm
mot,\phi}_{V,q}=1$ whenever $q$ is a nondegenerate quadratic form on
a finite-dimensional $\K$-vector space $V$. The normalizing factor
$\bL{}^{-\dim U/2}$ also appears in Kontsevich and
Soibelman~\cite[\S 5.1]{KoSo1}.
\label{mo2rem2}
\end{rem}

\begin{ex} Define $f:\bA^1\ra\bA^1$ by $f(z)=z^2$. In
Definition \ref{mo2def6} we may take $U=V=\bA^1$ and
$\pi=\id_{\bA^1}$. Then $\pi^{-1}(0)=\{0\}$ is one divisor
$E_0=\{0\}$, with $N_0=2$ and $\nu_0=1$. In \eq{mo2eq6} the only
nonzero term is $I=\{0\}$, and $\ti E^\ci_{\{0\}}=\mu_2=\{1,-1\}$ is
two points with $\hat\mu$-action $\hat\rho$ induced by the left
action of $\mu_2$ on itself. Hence
$Z_f(T)=[\mu_2,\hat\rho]\cdot(\bL^{-1} T^2)/(1-\bL^{-1}T^2)$. Taking
the limit $T\ra\iy$, equation \eq{mo2eq7} yields
$MF_{\bA^1,z^2}^{\rm mot}=[\mu_2,\hat\rho]$. Thus \eq{mo2eq4} and
\eq{mo2eq8} give
\e
MF_{\bA^1,z^2}^{\rm mot,\phi}=\bL^{-1/2}\od
\bigl(1-[\mu_2,\hat\rho]\bigr)= \bL^{-1/2}\od\bL^{1/2}=1.
\label{mo2eq11}
\e

\label{mo2ex3}
\end{ex}

\subsection{The motivic Thom--Sebastiani Theorem}
\label{mo24}

Here is the motivic Thom--Sebastiani Theorem of Denef--Loeser and
Looijenga \cite{DeLo3,DeLo2,Looi}, stated using the notation of
\S\ref{mo21}--\S\ref{mo23}.

\begin{thm} Let\/ $U,V$ be smooth\/ $\K$-schemes, $f:U\ra\bA^1,$
$g:V\ra \bA^1$ regular functions, and\/ $X=\Crit(f),$ $Y=\Crit(g)$.
Write $f\boxplus g:U\t V\ra\bA^1$ for the regular function mapping
$f\boxplus g:(u,v)\mapsto f(u)+g(v)$. Then $MF_{U\t V,f \boxplus
g}^{\rm mot,\phi}=MF_{U,f}^{\rm mot,\phi}\bd MF_{V,g}^{\rm
mot,\phi}$ in $\cM^{\hat\mu}_{X\t Y}$.
\label{mo2thm2}
\end{thm}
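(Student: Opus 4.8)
The plan is to reduce the statement to the classical motivic Thom--Sebastiani theorem for the nearby cycle $MF^{\rm mot}$, proved by Denef--Loeser \cite{DeLo2,DeLo3} and Looijenga \cite{Looi}, which states roughly that for $f:U\ra\bA^1$, $g:V\ra\bA^1$ non-constant with $f(x)=g(y)=0$, the motivic Milnor fibre of $f\boxplus g$ at $(x,y)$ is controlled by a convolution of the Milnor fibres of $f$ at $x$ and $g$ at $y$; more precisely, in the form I would cite it, $\bigl([U_0\t V_0]-MF^{\rm mot}_{U\t V,f\boxplus g}\bigr)=\bigl([U_0]-MF^{\rm mot}_{U,f}\bigr)\bd\bigl([V_0]-MF^{\rm mot}_{V,g}\bigr)$ after restriction to the critical locus, where $\bd$ on the right is the external version of the product $\od$ of Definition \ref{mo2def3}. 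The first step is therefore to state this classical result carefully in our notation, being attentive to the fact that $\Crit(f\boxplus g)=\Crit(f)\t\Crit(g)=X\t Y$, and that $(f\boxplus g)\vert_{X\t Y}$ is locally constant with value $f\vert_X+g\vert_Y$, so that the decomposition $X\t Y=\coprod_{c}(X\t Y)_c$ used in \eq{mo2eq8} refines the product of the decompositions of $X$ and $Y$.

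Second, I would handle the normalization and the shift by $\bL^{-\dim/2}$. Working componentwise over $c=c_1+c_2$ with $c_1\in f(X)$, $c_2\in g(Y)$, definition \eq{mo2eq8} gives
\[
MF^{\rm mot,\phi}_{U\t V,f\boxplus g}\big\vert_{(X\t Y)_c}=\bL^{-\dim(U\t V)/2}\od\bigl([(U\t V)_c]-MF^{\rm mot}_{U\t V,(f\boxplus g)-c}\bigr)\big\vert_{(X\t Y)_c}.
\]
Since $\dim(U\t V)=\dim U+\dim V$ and $\bL^{1/2}$ is multiplicative for $\od$ (Example \ref{mo2ex1}), $\bL^{-\dim(U\t V)/2}=\bL^{-\dim U/2}\od\bL^{-\dim V/2}$; and $(f\boxplus g)-c=(f-c_1)\boxplus(g-c_2)$, so the classical Thom--Sebastiani formula applied to $f-c_1$ and $g-c_2$ turns the bracketed term into the external $\bd$-product of $\bigl([U_{c_1}]-MF^{\rm mot}_{U,f-c_1}\bigr)$ and $\bigl([V_{c_2}]-MF^{\rm mot}_{V,g-c_2}\bigr)$. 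Collecting the factors and using compatibility of $\od$, $\bd$, restriction and pullback along the projections, one reads off exactly $MF^{\rm mot,\phi}_{U,f}\big\vert_{X_{c_1}}\bd MF^{\rm mot,\phi}_{V,g}\big\vert_{Y_{c_2}}$. Summing over $(c_1,c_2)$ gives the claim in $\cM^{\hat\mu}_{X\t Y}$.

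Third, I must dispose of the degenerate cases, since the classical statement is usually phrased for non-constant $f,g$ whereas Definition \ref{mo2def7} sets $MF^{\rm mot}_{U,f}=0$ when $f$ is constant. If $f\equiv c_1$ is constant then $X=U$, $MF^{\rm mot,\phi}_{U,f}=\bL^{-\dim U/2}\od[U]$ after the shift (note $[U_{c_1}]=[U]$ and $MF^{\rm mot}_{U,c_1}=0$), and $f\boxplus g=c_1+g$ as a function on $U\t V$ is non-constant iff $g$ is; one checks directly from \eq{mo2eq7}, using $\ti E^\ci_I$ for $c_1+g$ being the pullback to $U\t V$ of those for $g$, that both sides agree (this is essentially the elementary "product with a smooth scheme" case of Thom--Sebastiani). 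The case where both are constant is trivial. Finally I would remark that the argument is pointwise-compatible, so it also yields the fibrewise identity $MF^{\rm mot,\phi}_{U\t V,f\boxplus g}(x,y)=MF^{\rm mot,\phi}_{U,f}(x)\od MF^{\rm mot,\phi}_{V,g}(y)$ in $\cM^{\hat\mu}_\K$ via \eq{mo2eq9}, which is the form needed in \S\ref{mo4}. The main obstacle I anticipate is purely bookkeeping: matching the several normalization conventions (the $(-1)^{\dim U}$ versus $\bL^{-\dim U/2}$ discrepancy flagged in Remark \ref{mo2rem2}(c), and the placement of the $[U_0]-(\cdot)$ "vanishing cycle" operation relative to the convolution) so that the classical statement of \cite{DeLo3,Looi} is transcribed with the correct signs and $\bL$-powers; once that dictionary is fixed, the proof is a short formal manipulation.
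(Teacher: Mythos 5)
The paper gives no proof of Theorem \ref{mo2thm2} at all --- it simply imports the Denef--Loeser/Looijenga Thom--Sebastiani theorem ``stated using the notation of \S\ref{mo21}--\S\ref{mo23}'' --- and your proposal supplies precisely the translation the authors leave implicit (the decomposition of $\Crit(f\boxplus g)=X\times Y$ over critical values $c=c_1+c_2$, the multiplicativity $\bL^{-\dim(U\times V)/2}=\bL^{-\dim U/2}\od\bL^{-\dim V/2}$ from Example \ref{mo2ex1}, the compatibility of $\od$ with $\bd$ under pullback along projections, and the degenerate constant cases excluded from the classical statement), so it is correct and consistent with, indeed more careful than, what the paper does. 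One trivial wording slip: the product decomposition $\coprod_{c_1,c_2}X_{c_1}\times Y_{c_2}$ refines $\coprod_c(X\times Y)_c$ rather than the reverse, but your componentwise computation uses the correct (finer) decomposition.
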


\begin{ex} Define $f:\bA^n\ra\bA^1$ by
$f(z_1,\ldots,z_n)=z_1^2+\cdots+z_n^2$ for $n\ge 1$. Then using
Theorem \ref{mo2thm2}, induction on $n$, and equation \eq{mo2eq11}
shows that
\e
MF^{\rm mot,\phi}_{\bA^n,z_1^2+\cdots+z_n^2}=MF_{\bA^1,z^2}^{\rm
mot,\phi}\bd\cdots\bd MF_{\bA^1,z^2}^{\rm mot,\phi}=1\bd\cdots\bd
1=1.
\label{mo2eq12}
\e
If $V$ is a finite-dimensional $\K$-vector space and $q$ a
nondegenerate quadratic form on $V$, then
$(V,q)\cong(\bA^n,z_1^2+\cdots+z_n^2)$ for $n=\dim V$, so $MF^{\rm
mot,\phi}_{V,q}=1$. The purpose of the factors $\bL^{-\dim U/2}$ in
\eq{mo2eq8}--\eq{mo2eq9} was to achieve this.
\label{mo2ex4}
\end{ex}

Our next result shows how motivic vanishing cycles change under
stabilization by a nondegenerate quadratic form. The term $\bL^{\dim
U/2}\od MF_{E,q}^{\rm mot,\phi}$ in \eq{mo2eq13} may be regarded as
the {\it relative motivic vanishing cycle\/} of $(E,q)$ relative
to~$U$.

\begin{thm} Let\/ $U$ be a smooth\/ $\K$-scheme, $\pi:E\ra U$ a
vector bundle over $U,$ $f:U\ra\bA^1$ a regular function, $q$ a
nondegenerate quadratic form on $E,$ and\/ $X=\Crit(f)$. Regard (the
total space of) $E$ as a smooth\/ $\K$-scheme and\/
$q,f\ci\pi:E\ra\bA^1$ as regular functions on $E,$ so that
$f\ci\pi+q:E\ra\bA^1$ is also a regular function. Identify $U$ with
the zero section in $E,$ so that\/ $X\subseteq U\subseteq E,$ and we
have $\cM^{\hat\mu}_X\subseteq \cM^{\hat\mu}_U\subseteq
\cM^{\hat\mu}_E$. Then in $\cM^{\hat\mu}_E$ we have
\e
MF_{E,f\ci\pi+q}^{\rm mot,\phi}=MF_{U,f }^{\rm mot,\phi}\od
\bigl(\bL^{\dim U/2}\od MF_{E,q}^{\rm mot,\phi}\bigr).
\label{mo2eq13}
\e
\label{mo2thm3}
\end{thm}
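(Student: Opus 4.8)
The plan is to reduce the statement to the motivic Thom--Sebastiani Theorem \ref{mo2thm2} by trivializing the bundle $E$ Zariski-locally and tracking the correction factors. First I would choose a Zariski open cover $\{U_a\}$ of $U$ on which $E$ is trivial, say $E\vert_{U_a}\cong U_a\t\bA^r$ with $r=\rank E$; this is possible since any vector bundle trivializes Zariski-locally. Over each $U_a$ the function $q$ restricts to a (generally non-constant in the $U$-direction) nondegenerate quadratic form $q_a$ on $U_a\t\bA^r$. By a standard argument (Gram--Schmidt / completing the square over the ring $\O_{U_a}$, shrinking $U_a$ further if necessary so that the relevant leading coefficients are invertible), one may further change coordinates so that $q_a=c_1 z_1^2+\cdots+c_r z_r^2$ with $c_i\in\O_{U_a}^\times$; then rescaling $z_i\mapsto c_i^{-1/2}z_i$ after passing to an \'etale double cover (to extract square roots) one would like to arrive at the split form $z_1^2+\cdots+z_r^2$. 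Here the first subtlety appears: the square roots $c_i^{1/2}$ need not exist Zariski-locally on $U_a$, only \'etale-locally, which is exactly the phenomenon that forces the correction factor $\Up(P_\Phi)$ in Theorem \ref{mo4thm2}. In the present statement, however, there is no correction factor, so the point must be that the $\hat\mu$-equivariant motive $MF^{\mathrm{mot},\phi}_{\bA^r,\,c_1z_1^2+\cdots+c_rz_r^2}$ computed relative to $U_a$ is genuinely equal to $1$ in $\cM^{\hat\mu}_{U_a}$ even without splitting; this should follow from Example \ref{mo2ex4} together with the fact that the motivic Milnor fibre at each point depends only on the fibrewise function, and the fibrewise function is a nondegenerate quadratic form, whose motivic vanishing cycle is $1$. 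I would need to make the "relative" version of Example \ref{mo2ex4} precise, presumably by running the resolution computation of Definition \ref{mo2def6}/Theorem \ref{mo2thm1} in a family over $U_a$, or by invoking the compatibility of $MF^{\mathrm{mot},\phi}$ with \'etale pullback recalled at the end of Definition \ref{mo2def7}.

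Granting the local computation, over each $U_a$ we have $E\vert_{U_a}\cong U_a\t\bA^r$ with $q\sim z_1^2+\cdots+z_r^2$ and $f\ci\pi+q\sim f\boxplus(z_1^2+\cdots+z_r^2)$, so the motivic Thom--Sebastiani Theorem \ref{mo2thm2} gives
\[
MF^{\mathrm{mot},\phi}_{E\vert_{U_a},\,f\ci\pi+q}\big\vert
=MF^{\mathrm{mot},\phi}_{U_a,\,f\vert_{U_a}}\bd MF^{\mathrm{mot},\phi}_{\bA^r,\,z_1^2+\cdots+z_r^2}
=MF^{\mathrm{mot},\phi}_{U_a,\,f\vert_{U_a}}\bd 1_{\{0\}}
=MF^{\mathrm{mot},\phi}_{U_a,\,f\vert_{U_a}},
\]
and, comparing normalizations, $\bL^{\dim U/2}\od MF^{\mathrm{mot},\phi}_{E,q}$ restricts over $U_a$ to $\bL^{\dim U/2}\od(\bL^{-\dim(U_a\t\bA^r)/2}\od[\cdots])$; chasing the Tate twists with $\dim E=\dim U+r$ one checks that the two sides of \eq{mo2eq13} agree on $\cM^{\hat\mu}_{E\vert_{U_a}}$. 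The key point is that both sides of \eq{mo2eq13} are elements of $\cM^{\hat\mu}_E$ supported on $X\subseteq U\subseteq E$, and restriction $\cM^{\hat\mu}_E\to\prod_a\cM^{\hat\mu}_{E\vert_{U_a}}$ is injective when the $U_a$ cover $X$ (by the cut-and-paste relation, decomposing $E$ into locally closed pieces subordinate to the cover). Hence equality on each piece gives equality in $\cM^{\hat\mu}_E$.

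To assemble this I would: (1) set up the local trivializations and the relative normal form for $q$; (2) prove the relative quadratic-form vanishing-cycle computation $MF^{\mathrm{mot},\phi}_{E\vert_{U_a},\,q}=1_{U_a}$ (suitably normalized), this being the technical heart; (3) apply Thom--Sebastiani \ref{mo2thm2} over each $U_a$ and reconcile the $\bL^{1/2}$-twists using $\dim E=\dim U+\rank E$; (4) invoke Zariski-descent (injectivity of restriction to a cover, via cut-and-paste) to conclude. The main obstacle is step (2): one cannot split $q$ into $\sum z_i^2$ Zariski-locally, so the computation that the relative motivic vanishing cycle of a nondegenerate quadratic form is trivial must be done either by a family resolution or by a gluing argument over an \'etale cover where splitting \emph{is} possible, carefully checking that the \'etale-local correction $\Up(P)$ which would a priori enter actually cancels because $MF^{\mathrm{mot},\phi}$ of any nondegenerate quadratic form is $1$ regardless of the choice of splitting. (Note this is precisely why Theorem \ref{mo2thm3} does \emph{not} need the quotient ring $\oM^{\hat\mu}_E$: the fibrewise function is literally a nondegenerate quadratic form, not merely stably equivalent to one, so no genuine $\Z_2$-ambiguity survives.)
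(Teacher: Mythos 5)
Your step (2) is not a technical obstacle to be overcome but a false statement, and the closing parenthetical (``no genuine $\Z_2$-ambiguity survives because the fibrewise function is literally a nondegenerate quadratic form'') is exactly where the argument breaks. A motive in $\cM^{\hat\mu}_{U_a}$ is not determined by its pointwise fibres, and the relative motivic vanishing cycle of a family of nondegenerate quadratic forms is \emph{not} $1$ in general: by Lemma \ref{mo2lem1} and Theorem \ref{mo2thm4} it equals $\Up(P)$ for $P\ra U_a$ the principal $\Z_2$-bundle corresponding to $\bigl(\La^rE,\det(q)\bigr)$ under \eq{mo2eq19}, which can be nontrivial on every nonempty Zariski open subset. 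Example \ref{mo3ex1} is a direct counterexample to your step (2): take the base to be $\bA^1\sm\{0\}$ with coordinate $y$, $E$ the trivial line bundle with fibre coordinate $x$, $f=0$ and $q=x^2y$. Each fibrewise function $x\mapsto yx^2$ is a nondegenerate quadratic form with pointwise motivic vanishing cycle $1$, yet $\bL^{1/2}\od MF^{\rm mot,\phi}_{E,q}=\Up(P)\ne 1$ for $P=\{(x,y):x^2y=1\}$ the nontrivial double cover. This surviving $\Z_2$-ambiguity is precisely why the factor $\bL^{\dim U/2}\od MF_{E,q}^{\rm mot,\phi}$ is retained on the right-hand side of \eq{mo2eq13} rather than being replaced by $1$, why Theorem \ref{mo4thm2} needs the correction $\Up(P_\Phi)$, and why the quotient rings $\oM^{\hat\mu}_X$ are introduced at all. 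If your step (2) held, your argument would prove $MF^{\rm mot,\phi}_{E,f\ci\pi+q}=MF^{\rm mot,\phi}_{U,f}$ in $\cM^{\hat\mu}_E$, which is strictly stronger than \eq{mo2eq13} and false.

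The paper's proof never splits $q$ and never evaluates $MF^{\rm mot,\phi}_{E,q}$. Instead it compares $f\ci\pi+q$ on $E$ with $f\boxplus q$ on $U\t E$, where $f$ lives on the $U$ factor and $q$ on the $E$ factor so that Theorem \ref{mo2thm2} genuinely applies, via the morphism $\pi\t\id:E\ra U\t E$. The key step is $(\pi\t\id)^*\bigl(Z_{f\boxplus q}(T)\bigr)=Z_{f\ci\pi+q}(T)$, proved by identifying the arc spaces $\fU_{n,1}$ of the two functions: \'etale locally $U\t E\cong E\t\bA^m$ compatibly with the two functions, and a Zariski-local algebraic connection on $E$ preserving $q$ upgrades this to a Zariski-local, hence motivic, identification in which the two arc spaces differ by a factor $[\cL_n(\bA^m)_0]=\bL^{mn}$. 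If you wish to salvage a trivialization-based argument, what you actually need is a Thom--Sebastiani-type statement for twisted sums $f(u)+\sum_ic_i(u)z_i^2$ with the $c_i$ varying over the base; that is essentially what the arc-space comparison supplies, and it yields \eq{mo2eq13} with the quadratic factor intact rather than trivialized.
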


\begin{proof} Consider the function $f\boxplus q:U\t
E\ra\bA^1$ and morphism $\pi\t\id:E\ra U\t E$. We have $(f\boxplus
q)\ci(\pi\t\id)=f\ci\pi+q:E\ra\bA^1$. We first claim that
\e
(\pi\t\id)^*\bigl(Z_{f\boxplus q}(T)\bigr)=Z_{f\ci\pi+q}(T)\in
\cM^{\hat\mu}_E[[T]].
\label{mo2eq14}
\e
To see this, note that \'etale locally on $U\t E$, there exist
isomorphisms $U\t E\cong E\t\bA^m$, where $m=\dim U$, making the
following two diagrams equivalent:
\e
\begin{gathered}
\xymatrix@C=8pt@R=17pt{ *+[r]{E} \ar[d]^{\pi\t\id}
\ar@/^/[drrrrrrr]^{f\ci\pi+q} &&&&&&&& *+[r]{E}
\ar[d]^{\id\t 0} \ar@/^/[drrrrrrr]^{f\ci\pi+q} \\
*+[r]{U\t E} \ar[rrrrrrr]^(0.4){f\boxplus q} &&&&&&&
*+[r]{\bA^1,} & *+[r]{E\t\bA^m}
\ar[rrrrrrr]^(0.5){(f\ci\pi+q)\boxplus 0} &&&&&&& *+[r]{\bA^1.} }
\end{gathered}
\label{mo2eq15}
\e

It follows that in Definition \ref{mo2def5}, we have \'etale local
isomorphisms
\e
\begin{split}
(\pi\t\id)^*\bigl((\fU\t\fE)_n\bigr)&\cong
\fE_n\t\cL_n(\bA^m)_0,\\
(\pi\t\id)^*\bigl((\fU\t\fE)_{n,1}\bigr)&\cong
\fE_{n,1}\t\cL_n(\bA^m)_0,
\end{split}
\label{mo2eq16}
\e
where $\fE_n,\fE_{n,1}$ and $(\fU\t\fE)_n,(\fU\t\fE)_{n,1}$ are
$\fU_n,\fU_{n,1}$ in Definition \ref{mo2def5} for
$f\ci\pi+q:E\ra\bA^1$ and $f\boxplus q:U\t E\ra\bA^1$, and
$\cL_n(\bA^m)_0$ is the subspace of arcs in $\cL_n(\bA^m)$ based at
0. The second equation also holds with $\hat\mu$-actions, where
$\cL_n(\bA^m)_0$ has the trivial $\hat\mu$-action~$\hat\io$.

We claim that the isomorphisms \eq{mo2eq16} also hold
Zariski locally, and thus on the level of motives. To see this, note that we may cover $U$ by Zariski open neighbourhoods $U'\subseteq U$, such that on $U'$ we can choose \'etale coordinates $(z_1,\ldots,z_m)$, and writing $E'=E\vert_{U'}$ and $q'=q\vert_{U'}$, we can choose an algebraic connection $\nabla$ on $E'$ which preserves $q'$. Since \eq{mo2eq16} over $E'$ concerns formal arcs starting in the images $(\pi\t\id)(E')$ and $E'\t 0$ in $U'\t E'$ and $E'\t\bA^m$, it depends only on the formal completions $\widehat{(U'\t E')}_{(\pi\t\id)(E')}$ and $\widehat{(E'\t\bA^m)}_{E'\t 0}$. 

Informally, we think of points of $\widehat{(U'\t E')}_{(\pi\t\id)(E')}$ as pairs $(u_0,(u_1,e_1))$ where $u_0,u_1\in U'$, and $e_1\in E'\vert_{u_1}$, and $u_0,u_1$ are `infinitesimally close' in $U'$, and $f\boxplus q$ maps $(u_0,(u_1,e_1))\mapsto f(u_0)+q'\vert_{u_1}(e_1)$. Similarly, we think of points of $\widehat{(E'\t\bA^m)}_{E'\t 0}$ as pairs $((u,e),x)$, where $u\in U'$, and $e\in E'\vert_u$, and $x\in\bA^m$ is `infinitesimally close' to 0, and $
(f\ci\pi+q)\boxplus 0$ maps~$((u,e),x)\mapsto f(u)+q'\vert_u(e)$.

We can now define a unique isomorphism of formal schemes
\begin{equation*}
\Phi:\widehat{(U'\t E')}_{(\pi\t\id)(E')}\,{\buildrel\cong\over\longra}\,\widehat{(E'\t\bA^m)}_{E'\t 0}
\end{equation*}
which (informally) on points maps $\Phi:(u_0,(u_1,e_1))\mapsto ((u,e),x)$, where $u=u_0$, and $x=\bigl(z_1(u_1)-z_1(u_0),\ldots,z_m(u_1)-z_m(u_0)\bigr)$, and $e\in E'\vert_{u_0}$ is the unique point such that if $\phi:\widehat{(\bA^m)}_0\ra U'$ is the unique morphism of formal schemes with $\phi(0)=u_0$, $\phi(x)=u_1$ and $z_i\ci\phi(y_1,\ldots,y_m)=z_i(u_0)+y_i$ for $i=1,\ldots,m$, then parallel translation from $t=0$ to $t=1$ along the formal path $t\mapsto \phi(t\cdot x)$ in $U'$ for $t\in\bA^1$ in the vector bundle $E'\ra U'$ using the connection $\nabla$ maps $e\in E'\vert_{u_0}$ at $t=0$ to $e_1\in E'\vert_{u_1}$ at $t=1$. As $\nabla$ preserves $q'$ we have $q'\vert_{u_0}(e)=q'\vert_{u_1}(e_1)$, so that $f(u_0)+q'\vert_{u_1}(e_1)=f(u)+q'\vert_u(e)$. Thus $\Phi$ is compatible with the commutative triangles \eq{mo2eq15}, and induces Zariski local isomorphisms \eq{mo2eq16} over $U'$, proving the claim. 

Since $[\cL_n(\bA^m)_0,\hat\io]=\bL^{mn}$, equation \eq{mo2eq16} at the level of motives gives
\begin{equation*}
(\pi\t\id)^*\bigl[(\fU\t\fE)_{n,1},\hat\rho\bigr]=
\bigl[\fE_{n,1},\hat\rho\bigr]\cdot\bL^{mn}.
\end{equation*}
Multiplying this by $\bL^{-n(\dim E +m)}T^n$, summing over all $n\ge
1$, and using \eq{mo2eq5}, proves \eq{mo2eq14}. Taking the limit
$T\ra\iy$ in \eq{mo2eq14} and using \eq{mo2eq7} yields
\e
(\pi\t\id)^*\bigl(MF_{U\t E,f\boxplus q}^{\rm mot}\bigr)=
MF_{E,f\ci\pi+q}^{\rm mot}.
\label{mo2eq17}
\e

Now both sides of \eq{mo2eq13} are supported on $X\subseteq
U\subseteq E$, and as in Definition \ref{mo2def7} $X=\coprod_{c\in
f(X)}X_c$ with $X_c=X\cap f^{-1}(c)$. For each $c\in f(X)$ we have
\begin{align*}
MF_{E,f\ci\pi+q}^{\rm mot,\phi}\vert_{X_c}&=
\bL^{-\dim E/2}\od \bigl([E_c,\hat\io]-MF_{E,(f-c)\ci\pi+q}^{\rm
mot}\bigr)\big\vert_{X_c}\\
&=\bL^{\dim U/2}\od (\pi\t\id)\vert_{X_c}^*\bigl[
\bL^{-\dim U/2-\dim E/2}\od\\
&\qquad\qquad\bigl([(U\t E)_c,\hat\io]
-MF_{U\t E,(f-c)\boxplus q}^{\rm
mot}\bigr)\big\vert_{X_c\t U}\bigr]\\
&=\bL^{\dim U/2}\od (\pi\t\id)\vert_{X_c}^*\bigl[MF_{U\t E,f\boxplus
q}^{\rm mot,\phi}\big\vert_{X_c\t U}\bigr]\\
&=\bL^{\dim U/2}\od (\pi\t\id)\vert_{X_c}^*\bigl[MF_{U,f }^{\rm mot,\phi}
\bd MF_{E,q}^{\rm mot,\phi}\big\vert_{X_c\t U}\bigr]\\
&=MF_{U,f }^{\rm mot,\phi} \od
\bigl(\bL^{\dim U/2}\od MF_{E,q}^{\rm mot,\phi}\bigr)\big\vert_{X_c},
\end{align*}
using \eq{mo2eq8} in the first and third steps, \eq{mo2eq17} with
$f-c$ in place of $f$ in the second, Theorem \ref{mo2thm2} in the
fourth, and comparing the definitions of $\od$ and $\bd$ in
Definition \ref{mo2def3} in the fifth: $MF_{U,f}^{\rm mot,\phi}\od
MF_{E,q}^{\rm mot,\phi}$ involves a fibre product (over $X$ or $U$
or $E$, all have the same effect), but $MF_{U,f }^{\rm mot,\phi} \bd
MF_{E,q}^{\rm mot,\phi}$ has no fibre product, and the effect of
$(\pi\t\id)^*$ is to take the fibre product. This proves the
restriction of \eq{mo2eq13} to $X_c$ for each $c\in f(X)$, and the
theorem follows.
\end{proof}

\subsection{\texorpdfstring{Motives of principal $\Z_2$-bundles}{Motives of principal ℤ₂-bundles}}
\label{mo25}

We define principal $\Z_2$-bundles $P\ra X$, associated motives
$\Up(P)$, and a quotient ring of motives $\oM_X^{\hat\mu}$ in
which~$\Up(P\ot_{\Z_2}Q)=\Up(P)\od\Up(Q)$ for all $P,Q$.

\begin{dfn} Let $X$ be a $\K$-scheme. A {\it principal\/
$\Z_2$-bundle\/} $P\ra X$ is a proper, surjective, \'etale morphism
of $\K$-schemes $\pi:P\ra X$ together with a free involution
$\si:P\ra P$, such that the orbits of $\Z_2=\{1,\si\}$ are the
fibres of $\pi$. The {\it trivial\/ $\Z_2$-bundle\/} is
$\pi_X:X\t\Z_2\ra X$. We will use the ideas of {\it isomorphism\/}
of principal bundles $\io:P\ra Q$, {\it section\/} $s:X\ra P$, {\it
tensor product\/} $P\ot_{\Z_2}Q$, and {\it pullback\/} $f^*(P)\ra W$
under a morphism of $\K$-schemes $f:W\ra X$, all of which are
defined in the obvious ways.

Write $\Z_2(X)$ for the abelian group of isomorphism classes $[P]$
of principal $\Z_2$-bundles $P\ra X$, with multiplication
$[P]\cdot[Q]=[P\ot_{\Z_2}Q]$ and identity $[X\t\Z_2]$. Since
$P\ot_{\Z_2}P\cong X\t\Z_2$ for each $P\ra X$, each element of
$\Z_2(X)$ is self-inverse, and has order 1 or 2.

If $P\ra X$ is a principal $\Z_2$-bundle over $X$, define a motive
\e
\Up(P)=\bL^{-1/2}\od\bigl([X,\hat\io]-[P,\hat\rho]\bigr)\in
\cM_X^{\hat\mu},
\label{mo2eq18}
\e
where $\hat\rho$ is the $\hat\mu$-action on $P$ induced by the
$\mu_2$-action on $P$ from the principal $\Z_2$-bundle structure, as
$\mu_2\cong\Z_2$.  If $P=X\t\Z_2$ is the trivial $\Z_2$-bundle then
\begin{equation*}
\Up(X\t\Z_2)=\bL^{-1/2}\od\bigl([X,\hat\io]-[X\t\Z_2,\hat\rho]
\bigr)=\bL^{-1/2}\od\bL^{1/2}\od[X,\hat\io]=[X,\hat\io],
\end{equation*}
using \eq{mo2eq4}. Note that $[X,\hat\io]$ is the identity in the
ring~$\cM_X^{\hat\mu}$.

As $\Up(P)$ only depends on $P$ up to isomorphism, $\Up$ factors
through $\Z_2(X)$, and we may consider $\Up$ as a
map~$\Z_2(X)\ra\cM_X^{\hat\mu}$.

For our applications in \S\ref{mo4}--\S\ref{mo5} we want
$\Up:\Z_2(X)\ra\cM_X^{\hat\mu}$ to be a group morphism with respect
to the multiplication $\od$ on $\cM_X^{\hat\mu}$, but we cannot
prove that it is. Our (somewhat crude) solution is to pass to a
quotient ring $\oM_X^{\hat\mu}$ of $\cM_X^{\hat\mu}$ such that the
induced map $\Up:\Z_2(X)\ra\oM_X^{\hat\mu}$ is a group morphism.

In fact we want more than this: if we simply define $\oM_X^{\hat\mu}$ to be the quotient ring of $\cM_X^{\hat\mu}$ by the relations
$\Up(P\ot_{\Z_2}Q)-\Up(P)\od\Up(Q)=0$ for all $[P],[Q]$ in $\Z_2(X)$ then pushforwards $\phi_*:\oM_X^{\hat\mu}\ra\oM_Y^{\hat\mu}$ will not be defined for general morphisms $\phi:X\ra Y$. The proof of Theorem \ref{mo5thm5} below implicitly uses pushforwards $\phi_*$ for Zariski open inclusions $\phi:R\hookra X$, and for the generalization to stacks \cite[Prop.~5.8, Th.~5.14]{BBBJ} we will need pushforwards $\phi_*$ for smooth $\phi:X\ra Y$. We will define $\oM_X^{\hat\mu}$ so that all pushforwards exist.

So, for each $\K$-scheme $Y$, define $I_Y^{\hat\mu}$ to be the ideal
in the commutative ring $\bigl(\cM_Y^{\hat\mu},\od\bigr)$ generated
by elements $\phi_*\bigl(\Up(P\ot_{\Z_2}Q)-\Up(P)\od\Up(Q)\bigr)$ for all $\K$-scheme morphisms $\phi:X\ra Y$ and principal $\Z_2$-bundles $P,Q\ra X$, and define $\oM_Y^{\hat\mu}=\cM_Y^{\hat\mu}/I_Y^{\hat\mu}$ to be the quotient, as a commutative ring with multiplication `$\od$', with
projection~$\Pi_Y^{\hat\mu}:\cM_Y^{\hat\mu}\ra\oM_Y^{\hat\mu}$.

Note that in $\oM_Y^{\hat\mu}$ we do not have the second
multiplication `$\,\cdot\,$', since we do not require $I_Y^{\hat\mu}$ to be an ideal in $\bigl(\cM_Y^{\hat\mu},\cdot\bigr)$. Also the external product $\boxt$ and projection $\Pi_Y:\cM_Y^{\hat\mu}\ra\cM_Y$ on $\cM_Y^{\hat\mu}$ do not descend to $\oM_Y^{\hat\mu}$. Apart from these, all of
\S\ref{mo21}--\S\ref{mo24}, in particular the operations $\od,\bd$, pushforwards $\phi_*$ and pullbacks $\phi^*$, elements $\bL,\bL^{1/2}, MF_{U,f}^{\rm mot},\ab MF_{U,f}^{\rm mot,\phi},\ab\Up(P)$, and Theorems \ref{mo2thm1}, \ref{mo2thm2} and \ref{mo2thm3}, make sense in $\oM_Y^{\hat\mu}$ rather than $\cM_Y^{\hat\mu}$ by applying $\Pi_Y^{\hat\mu}$. We will use the same notation in $\oM_Y^{\hat\mu}$ as in~$\cM_Y^{\hat\mu}$.

Taking $Y=X$ and $\phi=\id_X$, we see that $\oM_X^{\hat\mu}$ has the property that
\e
\Up(P\ot_{\Z_2}Q)=\Up(P)\od\Up(Q)\quad\text{in $\oM_X^{\hat\mu}$}
\label{mo2eq19}
\e
for all principal $\Z_2$-bundles~$P,Q\ra X$.
\label{mo2def8}
\end{dfn}

\begin{rem}{\bf(a)} When we define a ring $R$ by generators and
relations, such as $K_0(\Sch_X),K_0^{\hat\mu}(\Sch_X),
\cM_X^{\hat\mu},\oM_X^{\hat\mu}$, and we impose an apparently
arbitrary relation, such as Definition \ref{mo2def2}(iii), or the
quotient by $I_X^{\hat\mu}$ in Definition \ref{mo2def8}, then there
is a risk that $R$ may be small or even zero. If so, theorems we
prove in $R$ will be of little or no value. Thus, when we make such
a definition, we should justify that $R$ is `reasonably large', for
instance by producing morphisms from $R$ to other interesting rings.
We do this in {\bf(b)\rm,\bf(c)}.
\smallskip

\noindent{\bf(b)} Our definition of $\oM_X^{\hat\mu}$ is based on
the motivic rings of Kontsevich and Soibelman \cite{KoSo1}. In
\cite[\S 4.3]{KoSo1} they discuss motivic rings ${\rm Mot}^\mu(X)$
which coincide with our $K_0^{\hat\mu}(\Sch_X)$, as in Denef and
Loeser \cite{DeLo3}. Then in \cite[\S 4.5]{KoSo1} they define
motivic rings $\oM^\mu(X)$ as the quotient of ${\rm Mot}^\mu(X)$ by
a complicated relation involving cohomological equivalence of
algebraic cycles, and inverting $\bL$. As in \cite[\S 5.1]{KoSo1},
this equivalence relation implies that the analogue of Theorem
\ref{mo2thm4} holds in their $\oM^\mu(X)[\bL^{-1}]$, which implies
that \eq{mo2eq19} also holds in their $\oM^\mu(X)[\bL^{-1}]$. It
follows that their $\oM^\mu(X)[\bL^{-1}]$ is a quotient of our
$\oM_X^{\hat\mu}$.
\smallskip

\noindent{\bf(c)} As in Kontsevich and Soibelman \cite{KoSo1,KoSo2},
a major goal in Donaldson--Thomas theory is {\it categorification},
i.e.\ the replacement of $K_0(\Sch_X),K_0^{\hat\mu}(\Sch_X)$ by the
Grothendieck groups of suitable categories of motives. To each
$\K$-scheme $X$ one should associate triangulated $\Q$-linear tensor
categories $\DM_X^\Q,\DM_X^{\hat\mu,\Q}$ with functors
$M_X:\Sch_X\ra\DM_X^\Q$,
$M_X^{\hat\mu}:\Sch_X^{\hat\mu}\ra\DM_X^{\hat\mu,\Q}$ satisfying a
package of properties we will not discuss. Brav et al.\ \cite{BBDJS}
prove categorified versions of the results of this paper, in the
contexts of $\Q$-linear perverse sheaves, $\scr D$-modules, and
mixed Hodge modules. A kind of universal categorification may be
given by Voevodsky's category of motives~\cite{CiDe,Voev}.

Using similar arguments to \cite[\S 4.5, \S 5.1]{KoSo1}, involving
the triangulated $\Q$-linear properties of $\DM_X^{\hat\mu,\Q}$ in
an essential way, one can show that for any such categorification
with the right properties, an analogue of Theorem \ref{mo2thm4}
below holds in $K_0(\DM_X^{\hat\mu,\Q})[\bL^{-1}]$, so
$K_0(M_X^{\hat\mu}):K_0(\Sch_X^{\hat\mu})[\bL^{-1}]\ra
K_0(\DM_X^{\hat\mu,\Q})[\bL^{-1}]$ factors via our ring
$\oM_X^{\hat\mu}$, giving a morphism $\oM_X^{\hat\mu}\ra
K_0(\DM_X^{\hat\mu,\Q})[\bL^{-1}]$.
\label{mo2rem3}
\end{rem}

Now there is a natural 1-1 correspondence
\e
\begin{split}
\Z_2(X)=\,&\bigl\{\text{isomorphism classes $[P]$ of principal
$\Z_2$-bundles $P\ra X$}\bigr\} \\
\longleftrightarrow\, &\bigl\{\text{isomorphism classes $[L,\io]$ of
pairs $(L,\io)$, where $L\ra X$ is}\\
&\quad\text{a line bundle and $\io:L\ot_{\O_X}L\ra\O_X$ an
isomorphism}\bigr\},
\end{split}
\label{mo2eq20}
\e
defined as follows: to each principal $\Z_2$-bundle $\pi:P\ra X$ we
associate the line bundle $L=(P\t\bA^1)/\Z_2$ over $X$ (identifying
line bundles with their total spaces), where $\Z_2$ acts in the
given way on $P$ and as $z\mapsto -z$ on $\bA^1$. The isomorphism
$\io:L\ot_{\O_X}L\ra\O_X$ acts on points by
$\io\bigl(((p,z)\Z_2)\ot((p,z')\Z_2)\bigr)=(x,zz')\in
X\t\bA^1=\O_X$, where $p\in P$ with $\pi(p)=x$ and $z,z'\in\bA^1$.
Conversely, to each line bundle $\la:L\ra X$ with isomorphism
$\io:L\ot_{\O_X}L\ra\O_X$, we define $P$ to be the $\K$-subscheme of
points $l\in L$ with $\io(l\ot l)=(x,1)$ for $x=\la(l)$, with
projection $\pi=\la\vert_P:P\ra X$ and $\Z_2$-action $\si:l\mapsto
-1\cdot l$.

For smooth $U$, we can express $\Up(P)$ in \eq{mo2eq18} in terms of
the motivic vanishing cycle associated to the corresponding
$(L,\io)$ in~\eq{mo2eq20}.

\begin{lem} Let\/ $U$ be a smooth\/ $\K$-scheme, $P\ra U$ a
principal\/ $\Z_2$-bundle, and\/ $(L,\io)$ correspond to $P$ under
the $1$-$1$ correspondence \eq{mo2eq20}. Define a regular function
$q:L\ra\bA^1$ on the total space of\/ $L$ by $\io(l\ot
l)=(x,q(l))\in\O_X\cong X\t\bA^1$ for $l\in L,$ so that\/ $q$ is a
nondegenerate quadratic form on the fibres of\/ $L\ra X,$ and\/
$\Crit(q)\subset L$ is the zero section of\/ $L,$ which we identify
with\/ $U$. Then
\e
\Up(P)=\bL^{\dim U/2}\od MF_{L,q}^{\rm mot,\phi}\quad \text{in
both\/ $\cM_U^{\hat\mu}$ and\/ $\oM_U^{\hat\mu}$.}
\label{mo2eq21}
\e

\label{mo2lem1}
\end{lem}

\begin{proof} By a very similar proof to Example \ref{mo2ex3}, we
may show that $MF_{L,q}^{\rm mot}=[P,\hat\rho]$, for $\hat\rho$ as
in \eq{mo2eq18}. Equation \eq{mo2eq21} then follows from
\eq{mo2eq8}, \eq{mo2eq18} and $\dim L=\dim U+1$.
\end{proof}

We can generalize \eq{mo2eq21} to an expression \eq{mo2eq22} for
motivic vanishing cycles $MF_{E,q}^{\rm mot,\phi}$ of nondegenerate
quadratic forms on vector bundles. The proof uses \eq{mo2eq19}, and
so holds only in $\oM_U^{\hat\mu}$ rather than in $\cM_U^{\hat\mu}$.
Note that $\bL^{\dim U/2}\od MF_{E,q}^{\rm mot,\phi}$ in
\eq{mo2eq22} also occurs in equation \eq{mo2eq13} of
Theorem~\ref{mo2thm3}.

\begin{thm} Let\/ $U$ be a smooth\/ $\K$-scheme, $E\ra U$ a
vector bundle of rank\/ $r,$ and\/ $q\in H^0(S^2E^*)$ a
nondegenerate quadratic form on the fibres of\/ $E$. Regard\/
$q:E\ra\bA^1$ as a regular function on the total space of\/ $E,$
which is a nondegenerate homogeneous quadratic polynomial on each
fibre $E_u$ of\/ $E,$ so that\/ $\Crit(q)\subseteq E$ is the zero
section of\/ $E,$ which we identify with\/~$U$.

Then $\La^rE\ra U$ is a line bundle, and the determinant\/ $\det(q)$
is a nonvanishing section of\/ $(\La^rE^*)^{\ot^2},$ or equivalently
an isomorphism $(\La^rE)\ot_{\O_U}(\La^rE)\ra\O_U$. Thus there is a
principal\/ $\Z_2$-bundle $P\ra U,$ unique up to isomorphism,
corresponding to $\bigl(\La^rE,\det(q)\bigr)$ under the $1$-$1$
correspondence \eq{mo2eq20}. We have
\e
\Up(P)=\bL^{\dim U/2}\od MF_{E,q}^{\rm mot,\phi}\quad \text{in\/
$\oM_U^{\hat\mu}$.}
\label{mo2eq22}
\e

\label{mo2thm4}
\end{thm}

\begin{proof} We will prove the theorem by induction on $r=\rank E$.
The first step, with $r=1$, follows from Lemma \ref{mo2lem1}. For
the inductive step, suppose the theorem holds for all $U,E,q$ with
$\rank E=r\le n$ for $n\ge 1$, and let $U,E,q$ be as in the theorem
with $\rank E=n+1$. It is enough to prove \eq{mo2eq22} Zariski
locally on $U$. For each $x\in U$, we can choose a Zariski open
neighbourhood $U'$ of $x$ in $U$ and a section $s\in
H^0(E\vert_{U'})$ such that $q(s,s)$ is nonvanishing on $U'$.

Write $E\vert_{U'}=F\op L$, where $L=\langle s\rangle$ is the line
subbundle of $E$ spanned by $s$, and $F=L^\perp$ the orthogonal
vector subbundle of $L$ in $E$ with respect to $q$. This makes sense
as $q(s,s)\ne 0$ on $U'$, so $L^\perp\cap L=\{0\}$. Then
$q\vert_{U'}=q_F\op q_L$, for $q_F\in H^0(S^2F^*)$ and $q_L\in
H^0(S^2L^*)$ nondegenerate quadratic forms on $F,L$.

Using the projection $E\vert_{U'}=F\op L\ra F$, we can regard the
total space of $E\vert_{U'}$ as a line bundle over the total space
of $F$, with $E\vert_{U'}\cong \pi^*(L)$ for $\pi:F\ra U'$ the
projection. Then Theorem \ref{mo2thm3} with $F,E\vert_{U'},q_F,q_L$
in place of $U,E,f,q$ gives
\e
MF_{E\vert_{U'},q\vert_{U'}}^{\rm mot,\phi}=MF_{F,q_F}^{\rm
mot,\phi}\od \bigl(\bL^{\dim F/2}\od MF_{E\vert_{U'},q_L}^{\rm
mot,\phi}\bigr).
\label{mo2eq23}
\e

As $\rank F=n$, the inductive hypothesis gives
\e
\Up(Q)=\bL^{\dim U/2}\od MF_{F,q_F}^{\rm mot,\phi},
\label{mo2eq24}
\e
where $Q\ra U'$ is the principal $\Z_2$-bundle corresponding to
$\bigl(\La^rF,\det(q_F)\bigr)$ under \eq{mo2eq20}. Also Lemma
\ref{mo2lem1} gives
\e
\Up(R)=\bL^{\dim F/2}\od MF_{E\vert_{U'},q_L}^{\rm mot,\phi},
\label{mo2eq25}
\e
with $R\ra F$ the principal $\Z_2$-bundle corresponding to
$\bigl(E\vert_{U'}\cong \pi^*(L),q_L\bigr)$.

We now have
\e
\begin{split}
&MF_{E\vert_{U'},q\vert_{U'}}^{\rm mot,\phi}=\bigl(\bL^{-\dim U/2}
\od\Up(Q)\bigr)\od\Up(R)\\
&\quad=\bL^{-\dim U/2}\od\bigl(\Up(Q)\od\Up(R\vert_{U'})\bigr)=
\bL^{-\dim U/2}\od\Up(P\vert_{U'}),
\end{split}
\label{mo2eq26}
\e
where we consider that $U'\subseteq F\subseteq E\vert_{U'}$, and
regard \eq{mo2eq23}--\eq{mo2eq26} as equations in
$\oM_{E\vert_{U'}}^{\hat\mu}\supseteq\oM_F^{\hat\mu}
\supseteq\oM_{U'}^{\hat\mu}$ with $MF_{E\vert_{U'},q\vert_{U'}}^{\rm
mot,\phi},MF_{F,q_F}^{\rm mot,\phi},\Up(Q)$ supported on $U'$ and
$MF_{F,q_F}^{\rm mot,\phi},\Up(R)$ supported on $F$. Here the first
step of \eq{mo2eq26} combines \eq{mo2eq23}--\eq{mo2eq25}, the second
uses $\Up(Q)$ supported on $U'$ so that $\Up(Q)\od\Up(R)=\Up(Q)
\od\bigl(\Up(R)\vert_{U'}\bigr)=\Up(Q)\od\Up\bigl(R\vert_{U'}\bigr)$,
and the third uses $P\cong Q\ot_{Z_2}R\vert_{U'}$ since
$(E\vert_{U'},q)=(F,q_F)\op (L,q_L)$ and \eq{mo2eq19}. Equation
\eq{mo2eq26} is equivalent to the restriction of \eq{mo2eq22} to
$U'\subseteq U$. As we can cover $U$ by such Zariski open $U'$,
equation \eq{mo2eq22} holds. This proves the inductive step, and the
theorem.
\end{proof}

\begin{rem}{\bf(a)} the combination of equations \eq{mo2eq13} and
\eq{mo2eq22} in Theorems \ref{mo2thm3} and \ref{mo2thm4} will be
important in \S\ref{mo4}--\S\ref{mo5}. The main reason for passing
to the rings $\oM_X^{\hat\mu}$ is so that \eq{mo2eq22} holds.
\smallskip

\noindent{\bf(b)} Theorem \ref{mo2thm4} is more-or-less equivalent
to material in Kontsevich and Soibelman \cite[\S 5.1]{KoSo1}. It
implies that $MF_{E,q}^{\rm mot,\phi}$ depends only on $U,r,
\La^rE,\det(q)$, which is important in their definition of motivic
Donaldson--Thomas invariants. As for our $\oM_X^{\hat\mu}$,
Kontsevich and Soibelman \cite[\S 4.5]{KoSo1} also introduce an
extra relation in their ring of motives to make the analogue of
Theorem \ref{mo2thm4} true.
\smallskip

\noindent{\bf(c)} In equations \eq{mo2eq13}, \eq{mo2eq21} and
\eq{mo2eq22}, we can regard $\bL^{\dim U/2}\od MF_{E,q}^{\rm
mot,\phi}$ as the {\it relative motivic vanishing cycle\/} $MF_{E\ra
U,q}^{\rm mot,\phi,\rm rel}$ of $(E,q)$ relative to $U$.

In fact, as in Davison and Meinhardt \cite[\S 3.3]{DaMe}, one can
define such a relative vanishing cycle $MF_{E\ra U,q}^{\rm
mot,\phi,\rm rel}$ without assuming $U$ is smooth, only using that
$E\ra U$ is a smooth morphism. Then versions of Lemma \ref{mo2lem1}
and Theorem \ref{mo2thm4} hold without assuming $U$ is smooth. But
we will not need this.
\smallskip

\noindent{\bf(d)} We defined the ring $\oM_Y^{\hat\mu}$ by imposing
the pushforward $\phi_*$ of relation \eq{mo2eq19} in $\cM_X^{\hat\mu}$ under all morphisms $\phi:X\ra Y$. As in {\bf(c)}, we may rewrite  \eq{mo2eq22} as
\e
\Up(P)=MF_{E\ra U,q}^{\rm mot,\phi,\rm rel},
\label{mo2eq27}
\e
for $P\ra U$ the principal $\Z_2$-bundle corresponding to
$\bigl(\La^rE,\det(q)\bigr)$ under \eq{mo2eq20}, and where now both
sides of \eq{mo2eq27} make sense for $U$ singular as well as for $U$
smooth, by \cite[\S 3.3]{DaMe}. We may regard \eq{mo2eq27} as a
relation in $\cM_U^{\hat\mu}$, which is equivalent to \eq{mo2eq19}.
Thus, an alternative definition of the rings $\oM_Y^{\hat\mu}$,
closer in spirit to Kontsevich and Soibelman \cite[\S 4.5 \& \S
5.1]{KoSo1}, is to impose the relation in $\cM_Y^{\hat\mu}$ that for all $\K$-scheme morphisms $\phi:U\ra Y$, rank $r$ vector bundles $E\ra U$, and nondegenerate quadratic forms $q\in H^0(S^2E^*)$, the pushforward
$\phi_*\bigl(MF_{E\ra U,q}^{\rm mot,\phi,\rm rel}\bigr)$ depends only
on $U,\phi$ and~$\bigl(\La^rE,\det(q)\bigr)$.
\label{mo2rem4}
\end{rem}

\section{\texorpdfstring{Dependence of $MF^{\rm mot,\phi}_{U,f}$ on $f$}{Dependence of MFᵐᵒᵗᵠ on f}}
\label{mo3}

We will use the following notation, \cite[\S 4]{BBDJS}:

\begin{dfn} Let $U$ be a smooth $\K$-scheme, $f:U\ra\bA^1$ a
regular function, and $X=\Crit(f)$ as a closed $\K$-subscheme of
$U$. Write $I_X\subseteq \O_U$ for the sheaf of ideals of regular
functions $U\ra\bA^1$ vanishing on $X$, so that $I_X=I_{\d f}$. For
each $k=1,2,\ldots,$ write $X^{(k)}$ for the $k^{\rm th}$ {\it order
thickening of\/ $X$ in\/} $U$, that is, $X^{(k)}$ is the closed
$\K$-subscheme of $U$ defined by the vanishing of the sheaf of
ideals $I_X^k$ in $\O_U$. Also write $X^\red$ for the reduced
$\K$-subscheme of $U$, and $X^{(\iy)}$ or $\hat U$ for the formal
completion of $U$ along~$X$.

Then we have a chain of inclusions of closed $\K$-subschemes of $U$
\e
X^\red\subseteq X=X^{(1)}\subseteq X^{(2)}\subseteq X^{(3)}\subseteq
\cdots\subseteq X^{(\iy)}=\hat U\subseteq U,
\label{mo3eq1}
\e
although technically $X^{(\iy)}=\hat U$ is not a scheme, but a
formal scheme.

Write $f^{(k)}:=f\vert_{\smash{X^{(k)}}}:X^{(k)}\ra\K$, and
$f^\red:=f\vert_{\smash{X^\red}}:X^\red\ra\bA^1$, and $f^{(\iy)}$ or
$\hat f:=f\vert_{\smash{\hat U}}:\hat U\ra\bA^1$, so that
$f^{(k)},f^\red$ are regular functions on the $\K$-schemes
$X^{(k)},X^\red$, and $f^{(\iy)}=\hat f$ a formal function on the
formal $\K$-scheme $X^{(\iy)}=\hat U$. Note that
$f^\red:X^\red\ra\bA^1$ is locally constant, since~$X=\Crit(f)$.
\label{mo3def1}
\end{dfn}

As for perverse sheaves of vanishing cycles in Brav et al.\ \cite[\S
4]{BBDJS}, we can ask: how much of the sequence \eq{mo3eq1} does
$MF^{\rm mot,\phi}_{U,f}$ depend on? That is, is $MF^{\rm
mot,\phi}_{U,f}$ determined by $(X^\red,f^\red)$, or by
$(X^{(k)},f^{(k)})$ for some $k=1,2,\ldots,$ or by $(\hat U,\hat
f)$, as well as by $(U,f)$? Our next theorem shows that $MF^{\rm
mot,\phi}_{U,f}$ is determined by $(X^{(3)},f^{(3)})$, and hence a
fortiori also by $(X^{(k)},f^{(k)})$ for $k>3$ and by~$(\hat U,\hat
f)$:

\begin{thm} Let\/ $U,V$ be smooth\/ $\K$-schemes, $f:U\ra\bA^1,$
$g:V\ra\bA^1$ be regular functions, and\/ $X=\Crit(f),$ $Y=\Crit(g)$
as closed\/ $\K$-subschemes of\/ $U,V,$ so that the motivic
vanishing cycles $MF^{\rm mot,\phi}_{U,f},\,MF^{\rm mot,\phi}_{V,g}$
are defined on $X,Y$. Define $X^{(3)},f^{(3)}$ and\/
$Y^{(3)},g^{(3)}$ as in Definition\/ {\rm\ref{mo3def1},} and suppose
$\Phi:X^{(3)}\ra Y^{(3)}$ is an isomorphism with\/
$g^{(3)}\ci\Phi=f^{(3)},$ so that\/ $\Phi\vert_X:X\ra Y\subseteq
Y^{(3)}$ is an isomorphism. Then
\e
MF^{\rm mot,\phi}_{U,f}=\Phi\vert_X^*\bigl(MF^{\rm mot,\phi}_{V,g}
\bigr) \quad\text{in $\cM^{\hat\mu}_{X}$ and\/ $\oM^{\hat\mu}_{X}$.}
\label{mo3eq2}
\e

\label{mo3thm1}
\end{thm}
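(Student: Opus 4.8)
The plan is to reduce \eq{mo3eq2} to a purely formal comparison of $f$ and $g$ near $X$, and then to run a finite-determinacy argument in which third order is the natural threshold. First I would note that the hypotheses already force $\dim U=\dim V$: since $I_X\subseteq\mathfrak m_x$ for $x\in X$ we have $I_X^3\subseteq\mathfrak m_x^2$, so $X^{(3)}$ and $U$ have the same Zariski cotangent space $\mathfrak m_x/\mathfrak m_x^2$ at each $x\in X$, and similarly for $Y^{(3)},V$; as $\Phi$ is an isomorphism this gives $\dim U=\dim V=:n$. Since \eq{mo3eq2} may be checked on a Zariski cover of $X$, I may assume $X$ is affine; and splitting $X=\coprod_cX_c$ and replacing $f,g$ by $f-c,g-c$, I may assume $f\vert_X=g\vert_Y=0$, so that $X\subseteq U_0$, $Y\subseteq V_0$. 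Over the affine base $X$, using smoothness of $U,V$ and $\dim U=\dim V$, the isomorphism $\Phi:X^{(3)}\ra Y^{(3)}$ extends order by order to a formal isomorphism $\hat\Phi:\hat U\ra\hat V$ of the completions, lifting $\Phi\vert_X$. Set $\hat f':=\hat g\ci\hat\Phi$, a formal function on $\hat U$; then $\hat f'\equiv\hat f$ mod $I_X^3$ because $g^{(3)}\ci\Phi=f^{(3)}$, so $\partial_i\hat f'-\partial_i\hat f\in I_X^2$, whence $I_{d\hat f'}+I_X^2=I_X=I_{d\hat f}+I_X^2$, and $I_X$-adic Nakayama gives $\Crit(\hat f')=X=\Crit(\hat f)$.

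The heart of the proof is a relative Morse lemma: $\hat f$ and $\hat f'$ are related by a formal automorphism $\Psi$ of $\hat U$ with $\Psi\equiv\id$ mod $I_X^2$ and $\hat f'\ci\Psi=\hat f$. Put $f_t=(1-t)\hat f+t\hat f'$; the Nakayama argument again gives $I_{df_t}=I_X$ uniformly in $t$, hence $I_X^3=I_X^2\cdot I_{df_t}$, so in formal coordinates on $\hat U$ I can write $\hat f'-\hat f=\sum_ia_i(t)\,\partial_if_t$ with $a_i(t)\in I_X^2$ depending polynomially on $t$. The vector field $W_t=-\sum_ia_i(t)\,\partial_i$ has coefficients in $I_X^2$, hence is topologically nilpotent, and its time-$1$ flow $\Psi$ is a well-defined formal automorphism of $\hat U$ with $\Psi\equiv\id$ mod $I_X^2$; moreover $\tfrac{\d}{\d t}(f_t\ci\Psi_t)=\bigl((\hat f'-\hat f)+W_t(f_t)\bigr)\ci\Psi_t=0$, so $\hat f'\ci\Psi=\hat f$. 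Since $\Psi$ carries $\Crit(\hat f)=X$ onto $\Crit(\hat f')=X$ and is the identity mod $I_X^2$, we get $\Psi\vert_X=\id_X$. Therefore $\hat\Psi:=\hat\Phi\ci\Psi:\hat U\ra\hat V$ is a formal isomorphism with $\hat g\ci\hat\Psi=\hat f$ and $\hat\Psi\vert_X=\Phi\vert_X$; in particular $\hat\Psi^\sharp(I_Y)=I_X$, so $\hat\Psi$ restricts to isomorphisms $X^{(k)}\ra Y^{(k)}$ intertwining $f^{(k)}$ and $g^{(k)}$ for all $k$.

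It remains to see that $MF^{\rm mot,\phi}_{U,f}$ near $X$ depends only on these finite-order data. Restricting \eq{mo2eq7} to $U_0\sm X$ as in Definition \ref{mo2def7}, the class $[U_0,\hat\io]-MF^{\rm mot}_{U,f}$ is supported on $X$ and equals $[X,\hat\io]+\lim_{T\ra\iy}Z^X_f(T)$, where $Z_f(T)=Z^X_f(T)+Z^{U_0\sm X}_f(T)$ is the splitting of \eq{mo2eq5} according to support and $Z^X_f(T)=\sum_{n\ge1}[\fU_{n,1}\t_{U_0}X\ra X,\hat\rho]\,\bL^{-n\dim U}T^n$. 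Any arc based scheme-theoretically on $X$ factors through $X^{(n+1)}$, so $\cL_n(U)\t_UX=\cL_n(X^{(n+1)})\t_{X^{(n+1)}}X$, and $\fU_{n,1}\t_{U_0}X$ with its $\hat\mu$-action is cut out of this using only $\cL_n(f)$ restricted there, i.e.\ using $f^{(n+1)}$. Hence $\hat\Psi$ induces, for every $n$, a $\hat\mu$-equivariant isomorphism over $\Phi\vert_X$ between $\fU_{n,1}\t_{U_0}X$ and its $g$-analogue over $Y$, giving $\Phi\vert_X^*\bigl([V_0,\hat\io]-MF^{\rm mot}_{V,g}\bigr)=[U_0,\hat\io]-MF^{\rm mot}_{U,f}$ on $X$. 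Multiplying by $\bL^{-\dim U/2}=\bL^{-\dim V/2}$ and using \eq{mo2eq8} gives \eq{mo3eq2} in $\cM^{\hat\mu}_X$, and applying the projection $\Pi^{\hat\mu}_X$ gives it in $\oM^{\hat\mu}_X$.

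The main obstacle is the relative Morse lemma of the second paragraph, carried out in the formal rather than the analytic category: one must check that $\Crit(f_t)=X$ along the whole homotopy, and --- crucially --- that the division $\hat f'-\hat f=\sum_ia_i(t)\,\partial_if_t$ can be performed with $a_i(t)\in I_X^2$ uniformly in $t$. This is precisely the identity $I_X^3=I_X^2\cdot I_{df}$, which singles out third order and explains why $(X^{(2)},f^{(2)})$ is not enough; integrating $W_t$ is then harmless because its coefficients are topologically nilpotent. The remaining points --- the deformation-theoretic extension of $\Phi$ in the first paragraph, and the verification in the third that the truncated arc spaces, with their monodromic $\hat\mu$-structure, see $f$ only through its finite-order thickenings along $X$ --- are routine.
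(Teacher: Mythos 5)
Your route is genuinely different from the paper's. The paper reduces Theorem \ref{mo3thm1} to Proposition \ref{mo3prop1} (quoted from \cite[Prop.~4.3]{BBDJS}), which produces, Zariski-locally near each $x\in X$, an honest smooth scheme $T$ with \'etale morphisms $\pi_U:T\ra U$, $\pi_V:T\ra V$ satisfying $f\ci\pi_U=g\ci\pi_V$ and compatible with $\Phi$; the theorem then follows in a few lines from the \'etale invariance of $MF^{\rm mot,\phi}$ recorded at the end of Definition \ref{mo2def7}. You instead (i) show that $MF^{\rm mot,\phi}_{U,f}$ near $X$ depends only on the formal germ $(\hat U,\hat f)$, by observing that an arc in $\cL_n(U)$ based on $X$ kills $I_X^{n+1}$ and hence factors through $X^{(n+1)}$, so the $X$-supported part of $Z_f(T)$ is computed from the thickenings; and (ii) upgrade the third-order isomorphism to a formal isomorphism $(\hat U,\hat f)\cong(\hat V,\hat g)$ by a deformation-theoretic lift plus a Moser-type finite-determinacy argument. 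This is appealing: it is self-contained where the paper outsources the local geometry to \cite{BBDJS}, it isolates exactly why third order is the threshold (the identity $I_X^3=I_X^2\cdot I_{\d f}$), and your step (i) makes Remark \ref{mo3rem} an ingredient rather than a corollary. Step (i) is essentially complete, modulo a sentence saying that restriction to $X_0$ commutes with $\lim_{T\ra\iy}$ (true because the restricted series is again rational of the form \eq{mo2eq6}, and such rational functions are determined by their Taylor expansions), and your opening observation $\dim U=\dim V$ is indeed needed to match the factors $\bL^{-n\dim U}$.

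Step (ii), however, has two genuine gaps as written. First, ``the Nakayama argument again gives $I_{\d f_t}=I_X$ uniformly in $t$'' does not establish what you actually need, namely that $\hat f'-\hat f=\sum_ia_i(t)\,\partial_if_t$ is solvable with $a_i(t)\in I_X^2$ depending on $t$ in a way that allows evaluation of the flow at $t=1$. Nakayama applies for each fixed $t=c\in\K$ because $I_X$ lies in the Jacobson radical of the complete ring $\hat\O_U$, but $I_X\cdot\hat\O_U[t]$ is not in the Jacobson radical of $\hat\O_U[t]$, so the uniform division requires a separate argument --- for instance, start from the exact identity $I_X^3=I_X^2\cdot I_{\d\hat f}$ (no Nakayama needed, since $I_{\d f}=I_X$ by definition of $X$), write $\hat f'-\hat f=\sum_ia_i\,\partial_i\hat f$ with $a_i\in I_X^2$ independent of $t$, substitute $\partial_i\hat f=\partial_if_t-t(\partial_i\hat f'-\partial_i\hat f)$, and iterate, checking that each correction gains at least one $I_X$-adic order so the resulting series converges and may be evaluated at $t=1$. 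Second, the assertion that $\Phi$ ``extends order by order to a formal isomorphism $\hat\Phi$'' conflates two statements: formal smoothness of $V$ gives compatible lifts $\Phi_k:X^{(k)}\ra V$ over affine pieces of $X$, but it is not immediate that these are isomorphisms onto $Y^{(k)}$ for $k>3$. One must check that $\Phi_k^\sharp(I_Y)$ generates $I_X/I_X^k$ (iterate $I_X=\Phi_k^\sharp(I_Y)\O_U+I_X^3$), deduce surjectivity of $\Phi_k^\sharp$ from this together with surjectivity modulo $I_X^3$, and obtain injectivity by composing with a lift of $\Phi^{-1}$ and using that a surjective endomorphism of a Noetherian ring is bijective. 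Both gaps are fillable, but they constitute precisely the content that the paper's appeal to \cite[Prop.~4.3]{BBDJS} supplies, so the proof is not complete as it stands.
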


\begin{proof} The proof of Theorem \ref{mo3thm1} is based on the following result \cite[Prop. 4.3]{BBDJS}:

\begin{prop} Let\/ $U,V$ be smooth\/ $\K$-schemes,
$f:U\ra\bA^1,$ $g:V\ra\bA^1$ be regular functions, and\/
$X=\Crit(f)\subseteq U,$ $Y=\Crit(g)\subseteq V$. Using the notation
of Definition\/ {\rm\ref{mo3def1},} suppose $\Phi:X^{(k+1)}\ra
Y^{(k+1)}$ is an isomorphism with\/ $g^{(k+1)}\ci\Phi=f^{(k+1)}$ for
some $k\ge 2$. Then for each\/ $x\in X$ we can choose a smooth\/
$\K$-scheme $T$ and \'etale morphisms $\pi_U:T\ra U,$ $\pi_V:T\ra V$
such that\/
\begin{itemize}
\setlength{\itemsep}{0pt}
\setlength{\parsep}{0pt}
\item[{\bf(a)}] $e:=f\ci\pi_U=g\ci\pi_V:T\ra\bA^1;$
\item[{\bf(b)}] setting $Q=\Crit(e),$ then
$\pi_U\vert_{Q^{(k)}}:Q^{(k)}\ra X^{(k)}\subseteq U$ is an
isomorphism with a Zariski open neighbourhood\/ $\ti X^{(k)}$
of\/ $x$ in $X^{(k)};$ and
\item[{\bf(c)}] $\Phi\ci\pi_U\vert_{Q^{(k)}}=\pi_V
\vert_{Q^{(k)}}:Q^{(k)}\ra Y^{(k)}$.
\end{itemize}
\label{mo3prop1}
\end{prop}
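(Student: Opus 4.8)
The plan is to promote the given order-$(k+1)$ isomorphism $\Phi$ to an \'etale-local isomorphism between the pairs $(U,f)$ and $(V,g)$, by a Moser-type argument over a formal neighbourhood of the critical locus, followed by Artin approximation. As the statement is local near $x\in X$, I may freely shrink $U,V$ to affine neighbourhoods of $x$ and $y:=\Phi(x)$.

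First I would observe that $\dim_xU=\dim_yV=:m$: since $I_X^{k+1}\subseteq\mathfrak m_x^2$, the scheme $X^{(k+1)}$ has the same Zariski tangent space at $x$ as $U$, and similarly for $Y^{(k+1)}$ and $V$ at $y$, and $\Phi$ identifies these. Choose an \'etale $u=(u_1,\dots,u_m):U\ra\bA^m$ with $u(x)=0$, transport it by setting $\bar v_i:=(u_i|_{X^{(k+1)}})\ci\Phi^{-1}$, lift the $\bar v_i$ to $v_i\in\O_V$, and note $v=(v_1,\dots,v_m):V\ra\bA^m$ is \'etale near $y$ with $v(y)=0$ and $v|_{Y^{(k+1)}}\ci\Phi=u|_{X^{(k+1)}}$. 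Form $T_0:=U\t_{\bA^m,u,v}V$ with \'etale projections $\pi_U,\pi_V$; it carries a $\K$-point $t_0$ over $(x,y)$, and the matching of coordinates gives a closed immersion $\sigma:X^{(k+1)}\hookra T_0$ with $\pi_U\ci\sigma$ the inclusion $X^{(k+1)}\hookra U$ and $\pi_V\ci\sigma=(Y^{(k+1)}\hookra V)\ci\Phi$. Since $\sigma$ is a section of the \'etale morphisms $\pi_U^{-1}(X^{(k+1)})\ra X^{(k+1)}$ and (composed with $\Phi$) $\pi_V^{-1}(Y^{(k+1)})\ra Y^{(k+1)}$, after shrinking $T_0$ around $t_0$ we get $Z:=\Crit(f\ci\pi_U)=\pi_U^{-1}(X)=\sigma(X)=\pi_V^{-1}(Y)=\Crit(g\ci\pi_V)$, with $\pi_U$ (resp.\ $\Phi^{-1}\ci\pi_V$) restricting to an isomorphism from $Z^{(k+1)}$ onto a neighbourhood of $x$ in $X^{(k+1)}$ (resp.\ of $y$ in $Y^{(k+1)}$); pulling $e_1:=f\ci\pi_U$, $e_2:=g\ci\pi_V$ back along $\sigma$ gives $e_1-e_2\in I_Z^{k+1}$. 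Thus $T_0$ already satisfies analogues of (b),(c) at level $k+1$; only (a) fails, since $e_1\ne e_2$ in general.

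Next I would remove the discrepancy over the $I_Z$-adic completion $\widehat{\O}_{T_0}$. In the coordinates $\zeta^{(0)}_i:=\pi_U^*u_i=\pi_V^*v_i$ one has $e_1=f(\zeta^{(0)})$, $e_2=g(\zeta^{(0)})$, and both Jacobian ideals $(\partial_if(\zeta^{(0)}))_i$ and $(\partial_ig(\zeta^{(0)}))_i$ equal $I_Z$ (as $Z=\Crit e_1=\Crit e_2$ near $t_0$). A successive-approximation (Moser) argument --- start from $\zeta^{(0)}$ and correct by elements of $I_Z^k$, using $I_Z^{\,j}=I_Z^{\,j-1}\cdot(\partial_ig(\zeta^{(0)}))_i$ together with the fact that the error term is quadratic, so that its $I_Z$-order doubles at each stage --- then solves $g(\zeta_1,\dots,\zeta_m)=e_1$ with $\zeta_i-\zeta^{(0)}_i\in I_Z^k$ in $\widehat{\O}_{T_0}$; here $k\ge2$ is precisely what puts the corrections into $\mathfrak m_{t_0}^2$. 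I would then invoke Artin approximation relative to the ideal $I_Z$ (legitimate since the coordinate ring of $T_0$ is excellent): it produces an \'etale morphism $\rho:T\ra T_0$ that restricts to an \emph{isomorphism} $\rho^{-1}(Z)\cong Z$, together with $\zeta_i\in\O_T$ solving the $\rho$-pullback of those equations. After a harmless further \'etale base change I can lift the morphism $T\ra\bA^m$ defined by the $\zeta_i$ along $v:V\ra\bA^m$ to $\pi_V':T\ra V$ with $(\pi_V')^*v_i=\zeta_i$, and set $\pi_U':=\pi_U\ci\rho$. Since $\zeta_i\equiv\zeta^{(0)}_i$ modulo squares, $\pi_U',\pi_V'$ are \'etale after shrinking $T$, and the solved equation reads $f\ci\pi_U'=f(\rho^*\zeta^{(0)})=g(\zeta)=g\ci\pi_V'=:e$, which is (a). As $Q:=\Crit(e)=(\pi_U')^{-1}(X)=\rho^{-1}(Z)$ near $t_0$ and $\rho^{-1}(Z)\cong Z$, the map $\pi_U'$ restricts to an isomorphism of $Q^{(k)}$ with a neighbourhood of $x$ in $X^{(k)}$ (via $\rho|_{Q^{(k)}}:Q^{(k)}\cong Z^{(k)}$ and $\pi_U|_{Z^{(k)}}:Z^{(k)}\cong X^{(k)}$), which is (b); and $\zeta_i\equiv\zeta^{(0)}_i$ on $Q^{(k)}$ forces $\pi_V'|_{Q^{(k)}}=\pi_V\ci\rho|_{Q^{(k)}}$ (as $v$ is unramified), so (c) follows by transporting the relation $\Phi\ci\pi_U=\pi_V$ on $Z^{(k)}$ along $Q^{(k)}\cong Z^{(k)}$.

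The step I expect to be the main obstacle is this last one. Plain Artin approximation at the point $t_0$ would only produce an \'etale refinement $\rho$ that may be a nontrivial covering over $Z$ --- indeed $\pi_U':T\ra U$ is generally very far from a local isomorphism at $t_0$, becoming one only after restriction to the critical locus --- and that would destroy (b). One must run the approximation \emph{relative to} the subscheme $Z$, so that $\rho$ is an isomorphism along $Z$, and then check that (b) and (c) survive. Carried out carefully, this is \cite[Prop.~4.3]{BBDJS}.
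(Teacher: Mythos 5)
This paper does not actually prove Proposition \ref{mo3prop1}: it is imported verbatim from Brav et al.\ \cite[Prop.~4.3]{BBDJS} and used as a black box in \S\ref{mo31}, so the comparison is with that external proof rather than with anything in this document. Your reconstruction is correct and follows essentially the same route. The setup is identical: $\dim_xU=\dim_{\Phi(x)}V$ because $I_X^{k+1}$ lies in the square of the maximal ideal at $x$, so $\Phi$ identifies Zariski tangent spaces; one transports an \'etale chart $u:U\ra\bA^m$ through $\Phi$ to an \'etale $v:V\ra\bA^m$ and forms $T_0=U\t_{\bA^m}V$, on which the tautological section $\sigma$ of $\pi_U$ over $X^{(k+1)}$ forces, after shrinking, $\Crit(f\ci\pi_U)=\Crit(g\ci\pi_V)=Z$ with $f\ci\pi_U-g\ci\pi_V\in I_Z^{k+1}$. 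For the correction step your Newton iteration is right: since $I_Z=I_{\d(g\ci\pi_V)}$, an error in $I_Z^N$ is absorbed by shifting the coordinates by elements of $I_Z^{N-1}$ at the cost of a new error in $I_Z^{2N-2}$, a strict gain precisely because $N\ge k+1\ge 3$ (so ``doubles'' is slightly optimistic but the order does strictly increase, and all shifts stay in $I_Z^k$). This converges only $I_Z$-adically, and you have correctly isolated the crux: the descent from the formal solution to an \'etale neighbourhood must be carried out by approximation over the henselian pair $(\O(T_0),I_Z)$ (legitimate since $\O(T_0)$ is excellent; this is Popescu's theorem, see the Stacks Project, Tag 0AJ7) rather than at the point $t_0$, so that the resulting $\rho:T\ra T_0$ restricts to an isomorphism $\rho^{-1}(Z)\ra Z$ --- hence, $\rho$ being \'etale, to isomorphisms $\rho^{-1}(Z)^{(n)}\ra Z^{(n)}$ for all $n$ --- and so that the approximate solution may be taken congruent to the formal one modulo $I_Z^{k+1}$, which is what keeps (b) and (c) alive. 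The remaining points (lifting $\zeta:T\ra\bA^m$ along the \'etale $v$ using the section over $Q$, and \'etaleness of $\pi_V'$ near $Q$ because $\d\pi_V'=\d(\pi_V\ci\rho)$ along $Q$) are routine. I see no gap beyond the level of detail expected of a sketch.
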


Let $U,V,f,g,X,Y,\Phi$ be as in Theorem \ref{mo3thm1}, and $x\in X$.
Apply Proposition \ref{mo3prop1} with $k=2$ to get
$T,\pi_U,\pi_V,e,Q,\ti X^{(2)}$ satisfying (a)--(c). As
$\pi_U,\pi_V$ are \'etale with $e=f\ci\pi_U=g\ci\pi_V$, we see that
\begin{equation*}
\pi_U\vert_Q^*\bigl(MF_{U,f}^{\rm mot,\phi}\bigr)=MF_{T,e}^{\rm
mot,\phi}=\pi_V\vert_Q^*\bigl(MF_{V,g}^{\rm mot,\phi}\bigr).
\end{equation*}
Part (c) gives $\Phi\ci\pi_U\vert_{Q^{(2)}}= \pi_V \vert_{Q^{(2)}}$,
so restricting to $Q\subset Q^{(2)}$ yields $\Phi\vert_{\ti
X}\ci\pi_U\vert_Q= \pi_V\vert_Q$ for $\ti X=X\cap\ti X^{(2)}$. Thus
we see that
\begin{equation*}
\pi_U\vert_Q^*\bigl(MF_{U,f}^{\rm mot,\phi}\bigr)=\pi_U\vert_Q^*
\ci\Phi\vert_{\ti X}^*\bigl(MF_{V,g}^{\rm mot,\phi}\bigr).
\end{equation*}
But $\pi_U\vert_Q:Q\ra\ti X$ is an isomorphism by (b), so we may
omit $\pi_U\vert_Q^*$, yielding
\begin{equation*}
MF_{U,f}^{\rm mot,\phi}\big\vert_{\ti X}=
\Phi\vert_{\ti X}^*\bigl(MF_{V,g}^{\rm mot,\phi}\bigr).
\end{equation*}
This is the restriction of \eq{mo3eq2} to $\ti X\subseteq X$. Since
we can cover $X$ by such Zariski open $\ti X$, Theorem \ref{mo3thm1}
follows.
\end{proof}

\begin{rem} We can define motivic vanishing cycles
$MF_{\vphantom{U}\smash{{\hat U,\hat f}}}^{\rm mot,\phi}$ for a
class of formal functions $\hat f$ on formal schemes $\hat U$ using
Theorem \ref{mo3thm1}. Let $U$ be a smooth $\K$-scheme, $X\subseteq
U$ a closed $\K$-subscheme, and $\hat U$ the formal completion of
$U$ along $X$. Suppose $f:\hat U\ra\bA^1$ is a formal function with
$\Crit(f)=X\subseteq\hat U$. Then there is a unique
$MF_{\vphantom{U}\smash{{\hat U,\hat f}}}^{\rm mot,\phi}$ in
$\cM_X^{\hat\mu}$ or $\oM_X^{\hat\mu}$ with the property that if
$U'\subseteq U$ is Zariski open with $X'=X\cap U'$ and
$g:U'\ra\bA^1$ is regular with $g+I_{X'}^3=\hat f\vert_{\smash{\hat
U'}}+I_X^3$ in $H^0(\O_{U'}/I_{X'}^3)$ then
$MF_{\vphantom{U}\smash{{\hat U,\hat f}}}^{\rm
mot,\phi}\vert_{X'}=MF_{U',g}^{\rm mot,\phi}\vert_{X'}$. Theorem
\ref{mo3thm1} shows that $MF_{U',g}^{\rm mot,\phi}\vert_{X'}$ is
independent of the choice of $g$ with $g+I_{X'}^3=\hat
f\vert_{\smash{\hat U'}}+I_X^3$, so $MF_{\vphantom{U}\smash{{\hat
U,\hat f}}}^{\rm mot,\phi}$ is well-defined. Motivic Milnor fibres
for formal functions were also defined by Nicaise and Sebag
\cite{NiSe} in the context of formal geometry.
\label{mo3rem}
\end{rem}

In Brav et al.\ \cite[Th.~4.2]{BBDJS} we proved an analogue of
Theorem \ref{mo3thm1} for perverse sheaves of vanishing cycles. The
next example shows that Theorem \ref{mo3thm1} with
$X^{(2)},Y^{(2)},f^{(2)},g^{(2)}$ in place of
$X^{(3)},Y^{(3)},f^{(3)},g^{(3)}$ is false, so we cannot do better
than $(X^{(3)},f^{(3)})$ in Theorem \ref{mo3thm1}.

\begin{ex} Let $U=V=\bA^1\t(\bA^1\sm\{0\})$ as smooth
$\K$-schemes, define regular $f:U\ra\bA^1$ and $g:V\ra\bA^1$ by
$f(x,y)=x^2$ and $g(x,y)=x^2y.$ Then $X=\Crit(f)=\bigl\{(x,y)\in
V:x=0\bigr\}=\Crit(g)=Y$, as $y\ne 0$. Hence
$X^{(2)}=\bigl\{(x,y)\in V:x^2=0\bigr\}=Y^{(2)}$, so
$f^{(2)}=f\vert_{X^{(2)}}=0=g\vert_{Y^{(2)}}=g^{(2)}$. Thus
$\Phi=\id_{X^{(2)}}:X^{(2)}\ra Y^{(2)}$ is an isomorphism with
$g^{(2)}\ci\Phi=f^{(2)}$. However, using Lemma \ref{mo2lem1} it is
easy to show that
\begin{equation*}
MF_{U,f}^{\rm mot,\phi}\!=\!\bL^{-1/2}\!\od\!\Up(X\t\Z_2)\!\ne\!
\bL^{-1/2}\!\od\!\Up(P)\!=\!MF_{V,g}^{\rm mot,\phi}\!=\!\Phi\vert_X^*
\bigl(MF^{\rm mot,\phi}_{V,g}\bigr),
\end{equation*}
where $X\t\Z_2\ra X$ is the trivial principal $\Z_2$-bundle and
$P\ra Y$ the nontrivial principal $\Z_2$-bundle over
$X=Y=\bA^1\sm\{0\}$.
\label{mo3ex1}
\end{ex}

The next theorem is an immediate corollary of Theorem \ref{mo3thm1},
in which we take $U=V$, $X=Y$ and $\Phi=\id_{X^{(3)}}$.

\begin{thm} Let\/ $U$ be a smooth\/ $\K$-scheme and
$f,g:U\ra\bA^1$ regular functions. Suppose $X:=\Crit(f)=\Crit(g)$
and\/ $f^{(3)}=g^{(3)},$ that is, $f+I_X^3=g+I_X^3$ in
$H^0(\O_U/I_X^3),$ where $I_X\subseteq\O_U$ is the ideal of regular
functions vanishing on $X$. Then $MF_{U,f}^{\rm mot,\phi}=
MF_{U,g}^{\rm mot,\phi}$ in $\cM^{\hat\mu}_{X}$
and\/~$\oM^{\hat\mu}_{X}$.
\label{mo3thm2}
\end{thm}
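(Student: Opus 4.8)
The plan is to sidestep Theorem~\ref{mo3thm1} and instead compare the motivic zeta functions $Z_f(T),Z_g(T)$ of Definition~\ref{mo2def5} directly, by showing that $f$ and $g$ are carried into one another by a \emph{formal} automorphism of $U$ along $X$, and transporting this automorphism to the arc spaces $\cL_n(U)$. The interest of the method is that such an automorphism exists only formally — not Zariski- or \'etale-locally, so Definition~\ref{mo2def7} does not directly apply — yet it still suffices to identify the finite jet spaces occurring in $Z_f(T)$, because these depend only on finite-order data along $X$; thus a purely formal input yields an equality of motives. We may assume $f$, and hence $g$, is non-constant, since otherwise $X=U$, $I_X=0$ and $g=f$.

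\emph{Reduction to nearby cycles.} Since $f-g\in I_X^3\subseteq I_X$ we have $f\vert_X=g\vert_X$, so $f,g$ take a common locally constant value $c$ on each connected component of $X$, and $X_c:=\{x\in X:f(x)=c\}=\{x\in X:g(x)=c\}$. In \eq{mo2eq8} the term $[U_c,\hat\io]$ restricts to the identity on $X_c$, which is the same scheme for $f$ and $g$; so it suffices to prove $MF^{\rm mot}_{U,f-c}\vert_{X_c}=MF^{\rm mot}_{U,g-c}\vert_{X_c}$ for each $c$, and after replacing $(f,g)$ by $(f-c,g-c)$ componentwise — which preserves both the difference $f-g$ and $X=\Crit(f)=\Crit(g)$ — this reduces to proving $MF^{\rm mot}_{U,f}\vert_X=MF^{\rm mot}_{U,g}\vert_X$ in $\cM^{\hat\mu}_X$ under the extra hypothesis $f\vert_X=g\vert_X=0$. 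By \eq{mo2eq5} and \eq{mo2eq7} this in turn follows once we know, for each $n\ge1$, that $[\fU_{n,1}(f)\vert_X\ra X,\hat\rho]=[\fU_{n,1}(g)\vert_X\ra X,\hat\rho]$ in $\cM^{\hat\mu}_X$, where $\fU_{n,1}(f)$ is as in Definition~\ref{mo2def5} for $f$ and $\fU_{n,1}(f)\vert_X=\fU_{n,1}(f)\t_{U_0}X$ is the part lying over $X$.

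\emph{The formal coordinate change} — the step I expect to be the main obstacle. The claim is that there is an isomorphism of formal $\K$-schemes $\theta:\hat U\ra\hat U$ (notation of Definition~\ref{mo3def1}) with $\theta\vert_X=\id_X$ and $\hat g\ci\theta=\hat f$. Since $I_X=I_{\d f}$ is generated by the $\xi(f)$, $\xi\in T_U$, and $\psi:=f-g\in I_X^3=I_X\cdot I_X^2$, one can (on an affine Zariski cover) write $\psi=V(f)$ for a vector field $V$ with coefficients in $I_X^2$. Such a $V$ is topologically nilpotent for the $I_X$-adic topology, so $\exp(V)$ is a formal automorphism of $\hat U$ equal to $\id$ modulo $I_X^2$, and $\hat f\ci\exp(-V)-\hat g=\tfrac12V^2(f)-\cdots\in I_X^4$ because $V^k(f)\in I_X^{k+2}$. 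Iterating — correcting at stage $k$ by a vector field in $I_X^{k+2}$ and improving the discrepancy from $I_X^{k+3}$ to $I_X^{k+4}$ — one obtains in the $I_X$-adic limit a formal automorphism $\Theta_U$ with $\hat f\ci\Theta_U=\hat g$ and $\Theta_U\equiv\id\pmod{I_X^2}$; then $\theta:=\Theta_U^{-1}$ works. This is a Tougeron-type finite-determinacy fact: a perturbation of $f$ lying in $I_{\d f}^3$ is killed by a formal coordinate change. The care required lies in the bookkeeping of which power of $I_X$ each correction and error occupies, in the convergence and invertibility of the $I_X$-adic limit (its Jacobian is $\equiv1\pmod{I_X}$, hence a unit in $\hat\O_U$), and in the reassembly from the affine cover — but for the conclusion we need only, for each member $X'$ of the cover, the equality $[\fU_{n,1}(f)\vert_{X'}\ra X',\hat\rho]=[\fU_{n,1}(g)\vert_{X'}\ra X',\hat\rho]$, and these glue over $X$ by inclusion--exclusion.

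\emph{Transport to jet spaces and conclusion.} As $\theta$ preserves the ideal $I_X$, it induces isomorphisms $\theta^{(N)}:X^{(N)}\ra X^{(N)}$ for all $N$, hence isomorphisms $\cL_n(\theta^{(N)})$ of arc spaces. For $N\ge n+1$ there is a natural identification $\cL_n(X^{(N)})=\cL_n(U)\vert_X:=\cL_n(U)\t_U X$: an $n$-jet whose base point lies on $X$ automatically kills $I_X^{n+1}$, since each generator of $I_X$ vanishes at the base point and so has $t$-order $\ge1$ along the jet; these identifications are compatible in $N$. Hence $\theta$ induces an isomorphism $\Theta_n:\cL_n(U)\vert_X\ra\cL_n(U)\vert_X$ over $X$, and $g\ci\theta^{(N)}=f\vert_{X^{(N)}}$ gives $\cL_n(f)(\al)=\cL_n(g)(\Theta_n(\al))$ in $\cL_n(\bA^1)$ for all $\al$. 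Therefore $\Theta_n$ restricts to an isomorphism $\fU_n(f)\vert_X\ra\fU_n(g)\vert_X$ intertwining the functions $\bar f_n,\bar g_n$ of Definition~\ref{mo2def5}; and $\Theta_n$ commutes with the $\bG_m$-action $a\cdot\al(t)=\al(at)$, which acts on the source of the jet while $\Theta_n$ acts on its target, so $\Theta_n$ descends to a $\mu_n$-equivariant isomorphism $\fU_{n,1}(f)\vert_X\ra\fU_{n,1}(g)\vert_X$ of $X$-schemes with $\hat\mu$-action. This yields the required equality of classes for all $n$, hence $Z_f(T)\vert_X=Z_g(T)\vert_X$, hence $MF^{\rm mot}_{U,f}\vert_X=MF^{\rm mot}_{U,g}\vert_X$ by \eq{mo2eq7}, and finally $MF^{\rm mot,\phi}_{U,f}=MF^{\rm mot,\phi}_{U,g}$ in $\cM^{\hat\mu}_X$, a fortiori in $\oM^{\hat\mu}_X$. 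Beyond the finite-determinacy step, the other delicate point is the scheme-level identification $\cL_n(X^{(N)})=\cL_n(U)\vert_X$ for $N\gg n$, in particular the matching of the non-reduced structures.
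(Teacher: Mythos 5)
Your proof is essentially correct, but it takes a genuinely different route from both arguments in the paper. The paper first deduces Theorem~\ref{mo3thm2} from Theorem~\ref{mo3thm1}, whose proof rests on \cite[Prop.~4.3]{BBDJS}: an \'etale ``common roof'' $T$ with \'etale maps $\pi_U,\pi_V$ matching $f$ and $g$, so that the comparison reduces to \'etale invariance of $MF^{\rm mot,\phi}$. It then gives a second, direct proof via resolution of singularities, showing that a resolution $(V,\pi)$ of $f$ also resolves $g$ and that the Galois covers $\ti E_I^\ci$ agree for all $I$ contributing over $X_0$. You instead build a \emph{formal} automorphism of $\hat U$ carrying $\hat f$ to $\hat g$ by the Tougeron iteration ($f-g\in I_X^3=I_X^2\cdot I_{\d f}$ gives a vector field with coefficients in $I_X^2$, whose flow improves the discrepancy one order at a time and converges $I_X$-adically), and then observe that a formal automorphism is enough to identify the truncated arc spaces $\fU_{n,1}(f)\vert_X\cong\fU_{n,1}(g)\vert_X$ $\mu_n$-equivariantly, hence $Z_f(T)\vert_{X_0}=Z_g(T)\vert_{X_0}$ term by term. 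What your approach buys: it avoids both \cite[Prop.~4.3]{BBDJS} and the resolution bookkeeping, it makes visible exactly where the exponent $3$ enters (the factorization $I_X^3=I_X^2\cdot I_{\d f}$ feeding the vector field), and it directly substantiates Remark~\ref{mo3rem} on formal functions. What it costs: it only proves Theorem~\ref{mo3thm2} (equal ambient $U$), not the stronger Theorem~\ref{mo3thm1}, and it leans on the fact that $-\lim_{T\ra\iy}$ depends only on the power series $Z_f(T)$ and not on a chosen rational expression --- true, and implicit in Definition~\ref{mo2def7}, but worth stating since you never exhibit a common resolution.

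Two small points you flag yourself deserve one sentence each. The identification $\cL_n(X^{(N)})=\cL_n(U)\vert_X$ for $N\ge n+1$ is an equality of reductions but not necessarily of schemes (a $K$-point of $\cL_n(X^{(N)})$ kills $I_X^N$, which forces the constant terms of generators of $I_X$ to be nilpotent, hence zero over a field, but not over a general test ring); this is harmless because $[S]=[S^\red]$ in $K_0^{\hat\mu}(\Sch_X)$ (Remark~\ref{mo2rem1}), so you may pass to reductions before comparing classes. And in the iteration you need $I_{\d f_k}=I_X$ at every stage to solve $f_k-g=V_k(f_k)$ again; this holds because $f_k=\Th^*(f)$ for an automorphism $\Th\equiv\id\bmod I_X^2$, whose Jacobian is $\equiv\id\bmod I_X$ and hence invertible in $\hat\O_U$, so $I_{\d f_k}=\Th^*(I_{\d f})=\Th^*(I_X)=I_X$. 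With these made explicit the argument is complete.
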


\section{Stabilizing motivic vanishing cycles}
\label{mo4}

To set up notation for our main result, which is Theorem
\ref{mo4thm2} below, we need the following theorem, which is proved
in Joyce \cite[Prop.s 2.24 \& 2.25(c)]{Joyc2}.

\begin{thm}[Joyce \cite{Joyc2}] Let\/ $U,V$ be smooth\/
$\K$-schemes, $f:U\ra\bA^1,$ $g:V\ra\bA^1$ be regular, and\/
$X=\Crit(f),$ $Y=\Crit(g)$ as $\K$-subschemes of\/ $U,V$. Let\/
$\Phi:U\hookra V$ be a closed embedding of\/ $\K$-schemes with\/
$f=g\ci\Phi:U\ra\bA^1,$ and suppose $\Phi\vert_X:X\ra V\supseteq Y$
is an isomorphism $\Phi\vert_X:X\ra Y$. Then:
\smallskip

\noindent{\bf(i)} For each\/ $x\in X\subseteq U$ there exist a
Zariski open neighbourhood\/ $U'$ of\/ $x$ in $U,$ a smooth\/
$\K$-scheme $V',$ and morphisms $\jmath:V'\ra V,$ $\Phi':U'\ra V',$
$\al:V'\ra U',$ $\be:V'\ra\bA^n,$ and\/
$q_1,\ldots,q_n:U'\ra\bA^1\sm\{0\}$, where $n=\dim V-\dim U,$ such
that\/ $\jmath:V'\ra V$ and\/ $\al\t\be:V'\ra U'\t\bA^n$ are
\'etale, $\Phi\vert_{U'}=\jmath\ci\Phi',$ $\al\ci\Phi'=\id_{U'},$
$\be\ci\Phi'=0,$ and\/
\e
g\ci\jmath=f\ci\al+(q_1\ci\al)\cdot(z_1^2\ci\be)+\cdots+
(q_n\ci\al)\cdot(z_n^2\ci\be).
\label{mo4eq1}
\e
Thus, setting $f'=f\vert_{U'},$ $g'=g\ci\jmath,$ $X'=\Crit(f')=X\cap
U',$ and\/ $Y'=\Crit(g'),$ then $f'=g'\ci\Phi',$ and\/
$\Phi'\vert_{X'}:X'\ra Y',$ $\jmath\vert_{Y'}:Y'\ra Y,$
$\al\vert_{Y'}:Y'\ra X$ are \'etale. We also require
that\/~$\Phi\ci\al\vert_{Y'}=\jmath\vert_{Y'}:Y'\ra Y$.

\smallskip

\noindent{\bf(ii)} Write\/ $N_{\sst UV}$ for the normal bundle of\/
$\Phi(U)$ in $V,$ regarded as a vector bundle on $U$ in the exact
sequence
\e
\xymatrix@C=20pt{ 0 \ar[r] & TU \ar[rr]^(0.4){\d\Phi} && \Phi^*(TV)
\ar[rr]^(0.55){\Pi_{\sst UV}} && N_{\sst UV} \ar[r] & 0,}
\label{mo4eq2}
\e
so that $N_{\sst UV}\vert_X$ is a vector bundle on $X$. Then there
exists a unique $q_{\sst UV}\in H^0\bigl(S^2N_{\sst
UV}\vert_X^*\bigr)$ which is a nondegenerate quadratic form on
$N_{\sst UV}\vert_X,$ such that whenever
$x,U',V',\jmath,\Phi',\al,\be,n,q_a$ are as in {\bf(i)\rm,} then
there is an isomorphism $\hat\be:\langle\d z_1,\ldots,\d
z_n\rangle_{U'}\ab\ra N_{\sst UV}^*\vert_{U'}$ making the following
commute:
\begin{equation*}
\xymatrix@C=110pt@R=15pt{
*+[r]{N_{\sst UV}^*\vert_{U'}} \ar[r]_(0.3){\Pi_{\sst UV}^*\vert_{U'}}
& *+[l]{\Phi\vert_{U'}^*(T^*V)=\Phi^{\prime *}\ci
\jmath^*(T^*V)} \ar[d]_{\Phi^{\prime *}(\d\jmath^*)} \\
*+[r]{\langle\d z_1,\ldots,\d z_n\rangle_{U'}=\Phi^{\prime *}
\ci\be^*(T_0^*\bA^n)} \ar[r]^(0.7){\Phi^{\prime *}(\d\be^*)}
\ar@{.>}[u]_{\hat\be} & *+[l]{\Phi^{\prime *}(T^*V'),}}
\end{equation*}
and if\/ $X'=X\cap U',$ then
\begin{equation*}
q_{\sst UV}\vert_{X'}=\bigl[q_1\cdot S^2\hat\be(\d z_1\ot\d z_1)+
\cdots+q_n\cdot S^2\hat\be(\d z_n\ot\d z_n)\bigr]\big\vert_{X'}.
\end{equation*}
\label{mo4thm1}
\end{thm}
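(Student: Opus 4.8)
This statement is a relative (parametrized) Morse lemma. Informally: \'etale-locally near any point of $X$, the closed embedding $\Phi:U\hookra V$ carrying $g$ should become the zero section of a trivial rank-$n$ bundle on which $g$ equals $f$ plus a fibrewise nondegenerate quadratic form $\sum_iq_i(u)z_i^2$, together with an intrinsic description of that form as a transverse Hessian of $g$. I would prove \textbf{(i)} in three stages — construction of tubular coordinates, a transversality observation forced by the hypothesis on critical loci, then a relative Morse lemma followed by a finite-determinacy correction — and then deduce \textbf{(ii)}.

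\emph{Tubular coordinates.} Fix $x\in X$, put $n=\dim V-\dim U=\rank N_{\sst UV}$, and shrink to a Zariski open affine $U'\ni x$ trivialising $N_{\sst UV}$. Since the conormal sheaf $I_{\Phi(U)}/I_{\Phi(U)}^2\cong N_{\sst UV}^*$ is locally free of rank $n$, lifting local generators of $I_{\Phi(U)}$ near $\Phi(x)$ yields a morphism from a neighbourhood of $\Phi(U')$ in $V$ to $\bA^n$ vanishing on $\Phi(U)$ whose differential trivialises $N_{\sst UV}^*$ along $\Phi(U')$. As $\Phi$ is a closed embedding of smooth schemes, the formal completion of $V$ along $\Phi(U')$ is isomorphic to that of a vector bundle over $U'$, so it retracts formally onto $\Phi(U')$; Artin approximation realises this retraction by an \'etale morphism. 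Combining, I obtain $V',\jmath:V'\to V,\Phi':U'\to V',\al:V'\to U',\be:V'\to\bA^n$ with $\jmath$ and $\al\t\be$ \'etale, $\Phi\vert_{U'}=\jmath\ci\Phi'$, $\al\ci\Phi'=\id_{U'}$, $\be\ci\Phi'=0$, and $g':=g\ci\jmath$ restricting to $f':=f\vert_{U'}$ on $\Phi'(U')=\{\be=0\}$.

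\emph{Transversality, Morse lemma and finite determinacy.} The hypothesis forces $\Crit(g)=\Phi(\Crit(f))$, hence $\Crit(g')=\Phi'(X')$, as schemes, where $X'=\Crit(f')$. So for $x'\in X'$ the point $\Phi'(x')$ is critical for $g'$, and $\Hess g'(\Phi'(x'))$ is a well-defined form on $T_{\Phi'(x')}V'\cong T_{x'}U'\op\bA^n$ whose $T_{x'}U'$-block is $\Hess f'(x')$ and whose kernel equals $T_{\Phi'(x')}\Crit(g')=T_{x'}X'=\ker\Hess f'(x')\subseteq T_{x'}U'$. This kernel condition lets one choose the retraction $\al$ (hence $\be$) in the previous step so that, along $X'$, the fibre $\bA^n$ is $\Hess g'$-orthogonal to $\d\Phi'(TU')$; the transverse Hessian $\bigl(a_{ij}(u,0)\bigr)$ of $g'=\ti f(u)+\sum_{i,j}a_{ij}(u,z)z_iz_j$ is then nondegenerate along $X'$, so after shrinking $U'$ Zariski-locally to where the successive principal minors are units (possibly permuting the $z_i$) one runs the classical Morse-lemma induction, completing the square one $z_i$ at a time relatively over $U'$ and algebraising each step by Artin approximation; this replaces $\be$ by a new $\bA^n$-valued morphism (nonlinear in $z$, $u$-dependent) bringing $g'$ to $\ti f\ci\al+\sum_i(q_i\ci\al)(z_i^2\ci\be)$ with $q_i:U'\to\bA^1\sm\{0\}$, where $\Crit(\ti f)=X'$ and $\ti f-f'$ lies in the square of the Jacobian ideal $(\d f')$. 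Finally $\ti f$ is turned into $f'$ by finite determinacy: the Moser homotopy $f_t=f'+t(\ti f-f')$ is trivialised by the flow of a time-dependent vector field on $U'$ with coefficients in $I_{X'}$ (solving $v_t(f_t)=-(\ti f-f')$, possible since the right-hand side lies in $(\d f_t)\cdot(\d f')$), a flow which fixes $X'$, exists formally near it, is algebraised \'etale-locally, and is absorbed into $\al$. Tracking the construction and using $\Crit(g')=\Phi'(X')$ gives \eq{mo4eq1} and the \'etale-ness of $\Phi'\vert_{X'},\jmath\vert_{Y'},\al\vert_{Y'}$ with $\Phi\ci\al\vert_{Y'}=\jmath\vert_{Y'}$.

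\emph{Part (ii) and the main obstacle.} Since $\d g$ vanishes at $\Phi(x)\in Y=\Crit(g)$, $\Hess g(\Phi(x))$ is a form on $\Phi^*(TV)\vert_x$, and the transversality above — that $\d\Phi(TU\vert_X)$ and $N_{\sst UV}\vert_X$ are $\Hess g$-orthogonal — lets it descend to a nondegenerate form on the quotient $N_{\sst UV}\vert_x$ in \eq{mo4eq2}; these patch to the asserted $q_{\sst UV}\in H^0\bigl(S^2N_{\sst UV}\vert_X^*\bigr)$, independent of choices. The map $\hat\be$ is dual to $\be^*$ on fibre cotangents and fits into the displayed square by construction; unwinding the Morse-lemma step, the fibrewise Hessian of $g'$ along $X'$ in the coframe $\d z_1,\ldots,\d z_n$ is $\mathrm{diag}(q_1,\ldots,q_n)$ up to the factor absorbed into the convention, so $q_{\sst UV}\vert_{X'}=\sum_iq_i\cdot S^2\hat\be(\d z_i\ot\d z_i)\vert_{X'}$; uniqueness is immediate as such $U'$ cover $X$. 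I expect the hard part to be the Morse-lemma-plus-finite-determinacy stage of \textbf{(i)}: carrying it out in the algebraic category, with nondegeneracy of the transverse Hessian available only along $X'$ rather than on all of $U'$, while every coordinate change is realised by an \'etale morphism and $U'$ may be shrunk only Zariski-locally — the controlled passage between the formal and algebraic pictures via completion and Artin approximation is what makes it work. A lesser subtlety is extracting from the bare scheme equality $\Crit(g)=\Phi(X)$ exactly the $\Hess g$-orthogonality needed for $q_{\sst UV}$ to be well-defined.
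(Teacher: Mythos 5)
First, a point of bookkeeping: the paper does not prove this theorem at all. It is imported verbatim from Joyce \cite{Joyc2} (Propositions 2.24 and 2.25(c) there), so there is no in-paper proof to compare against; I am assessing your argument on its own terms against what the result requires.

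Your first two stages (tubular coordinates from generators of the conormal sheaf plus formal retraction and Artin approximation; nondegeneracy of the transverse Hessian forced by the hypothesis on critical loci) are the right shape, but the way you extract transversality is too weak. You argue pointwise, via $\ker\Hess g'(\Phi'(x'))=T_{\Phi'(x')}\Crit(g')$ at closed points of $X'$. What the construction actually needs is the ideal-theoretic statement $\partial g'/\partial z_j\vert_{\{\be=0\}}\in(\partial f'/\partial u_1,\ldots,\partial f'/\partial u_m)$, so that the linear-in-$z$ terms can be absorbed into a shear of the projection $\al$ (replace $u$ by $u+\sum_j z_j c_{\cdot j}(u)$ where $\partial g'/\partial z_j\vert_{z=0}=\sum_i c_{ij}\,\partial f'/\partial u_i$). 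The pointwise kernel condition only gives membership in the \emph{radical} of the Jacobian ideal, which does not define the shear as a morphism and says nothing over the non-reduced structure of $X'$. The correct source of the ideal membership is that $\Phi\vert_X:X\ra Y$ is an isomorphism of \emph{schemes}, so $I_{\Crit(g')}=(z_1,\ldots,z_n)+\al^*(I_{X'})$ exactly, whence $\partial g'/\partial z_j\in(z)+(\partial_u f')$. You flag this as a "lesser subtlety" at the end, but as written it is a gap, since your Morse-lemma stage depends on it.

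The more serious problem is your final "finite determinacy" stage. It is simultaneously unnecessary and not executable as described. Unnecessary: since $f=g\ci\Phi$ holds exactly, $g'\vert_{\{\be=0\}}$ is literally $f'$, so once the linear-in-$z$ terms are killed by modifying $\al$ (a change of coordinates on $V'$, where \'etale-local modifications are permitted), the zeroth-order term of $g'$ in $z$ is $f\ci\al$ on the nose and no substitute $\tilde f$ with $\tilde f-f'\in I_{X'}^2$ ever arises; the remaining work is only to bring the $(z)^2$-part to the form $\sum_i(q_i\ci\al)(z_i^2\ci\be)$ with $q_i$ depending on $u$ alone, which is an iterated completion of squares plus \'etale-local square roots of units congruent to $1$ (Hensel's lemma), all happening on $V'$. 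Not executable: a Moser homotopy requires integrating a time-dependent vector field, which is unavailable in the algebraic category; replacing the flow by Artin approximation either yields only an approximate solution of $\tilde f\ci\Psi=f'$ (useless, since \eq{mo4eq1} is an exact identity and $f$ is not determined by any finite-order jet along $X$), or, if one poses the exact equation as the approximation problem, produces $\Psi$ only on an \'etale neighbourhood of a point of $U$ — whereas the theorem demands $U'$ \emph{Zariski} open in $U$ and the formula to contain $f\ci\al$ for the original $f$; such an \'etale-local-on-$U$ correction cannot be absorbed into $\al$ without destroying that requirement. So the argument should be reorganized to avoid modifying $f$ altogether, keeping every non-Zariski-local coordinate change on the $V$ side.
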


Following \cite[Def.s 2.26 \& 2.34]{Joyc2}, we define:

\begin{dfn} Let $U,V$ be smooth $\K$-schemes, $f:U\ra\bA^1$,
$g:V\ra\bA^1$ be regular, and $X=\Crit(f)$, $Y=\Crit(g)$ as
$\K$-subschemes of $U,V$. Suppose $\Phi:U\hookra V$ is an embedding
of $\K$-schemes with $f=g\ci\Phi:U\ra\bA^1$ and $\Phi\vert_X:X\ra Y$
an isomorphism. Then Theorem \ref{mo4thm1}(ii) defines the normal
bundle $N_{\sst UV}$ of $U$ in $V$, a vector bundle on $U$ of rank
$n=\dim V-\dim U$, and a nondegenerate quadratic form $q_{\sst
UV}\in H^0(S^2N_{\sst UV}^*\vert_X)$. Taking top exterior powers in
the dual of \eq{mo4eq2} gives an isomorphism of line bundles on $U$
\begin{equation*}
\rho_{\sst UV}:K_U\ot\La^nN_{\sst UV}^*
\,{\buildrel\cong\over\longra}\,\Phi^*(K_V),
\end{equation*}
where $K_U,K_V$ are the canonical bundles of $U,V$.

Write $X^\red$ for the reduced $\K$-subscheme of $X$. As $q_{\sst
UV}$ is a nondegenerate quadratic form on $N_{\sst UV}\vert_X,$ its
determinant $\det(q_{\sst UV})$ is a nonzero section of
$(\La^nN_{\sst UV}^*)\vert_X^{\ot^2}$. Define an isomorphism of line
bundles on~$X^\red$:
\e
J_\Phi=\rho_{\sst UV}^{\ot^2}\ci
\bigl(\id_{K_U^2\vert_{X^\red}}\ot\det(q_{\sst
UV})\vert_{X^\red}\bigr):K_U^{\ot^2}\big\vert_{X^\red}
\,{\buildrel\cong\over\longra}\,\Phi\vert_{X^\red}^*
\bigl(K_V^{\ot^2}\bigr).
\label{mo4eq3}
\e

Since principal $\Z_2$-bundles $\pi:P\ra X$ are a topological
notion, and $X^\red$ and $X$ have the same topological space,
principal $\Z_2$-bundles on $X^\red$ and on $X$ are equivalent.
Define $\pi_\Phi:P_\Phi\ra X$ to be the principal $\Z_2$-bundle
which parametrizes square roots of $J_\Phi$ on $X^\red$. That is,
local sections $s_\al:X\ra P_\Phi$ of $P_\Phi$ correspond to local
isomorphisms $\al: K_U\vert_{X^\red} \ra\Phi\vert_{X^\red}^*(K_V)$
on $X^\red$ with $\al\ot\al=J_\Phi$. Equivalently, $P_\Phi$ is the
principal $\Z_2$-bundle corresponding to $\bigl(\La^nN_{\sst
UV}\vert_X, \det(q_{\sst UV})\bigr)$ under the 1-1
correspondence~\eq{mo2eq20}.
\label{mo4def1}
\end{dfn}

The reason for restricting to $X^\red$ above is the next result
\cite[Prop.~2.27]{Joyc2}, whose proof uses the fact that $X^\red$ is
reduced in an essential way.

\begin{lem} In Definition\/ {\rm\ref{mo4def1},} the isomorphism
$J_\Phi$ in \eq{mo4eq3} and the principal\/ $\Z_2$-bundle\/
$\pi_\Phi:P_\Phi\ra X$ depend only on $U,\ab V,\ab X,\ab Y,\ab f,g$
and\/ $\Phi\vert_X:X\ra Y$. That is, they do not depend on
$\Phi:U\ra V$ apart from\/~$\Phi\vert_X:X\ra Y$.
\label{mo4lem1}
\end{lem}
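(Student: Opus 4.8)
The plan is to reduce Lemma~\ref{mo4lem1} to a computation in linear algebra on a single (degenerate) quadratic space, the reduction to points being exactly where reducedness of $X^\red$ is used. First, since $\pi_\Phi:P_\Phi\ra X$ is by construction the bundle of square roots of $J_\Phi$, it is determined by $J_\Phi$, so it suffices to treat $J_\Phi$. Let $\Phi,\ti\Phi:U\hookra V$ be two embeddings with $f=g\ci\Phi=g\ci\ti\Phi$ and $\Phi\vert_X=\ti\Phi\vert_X=:\phi:X\ra Y$ an isomorphism; I must show $J_\Phi=J_{\ti\Phi}$ as isomorphisms $K_U^{\ot^2}\vert_{X^\red}\ra\phi^*(K_V^{\ot^2})$. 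Their ratio $J_\Phi\ci J_{\ti\Phi}^{-1}$ is a nowhere-vanishing regular function on $X^\red$, and since $X^\red$ is reduced with $\K$ algebraically closed, it equals $1$ as soon as it does so at every closed point. So it is enough to prove, for each $x\in X$ with $y:=\phi(x)$, that the isomorphisms $J_\Phi(x),J_{\ti\Phi}(x):(\La^{\dim U}T^*_xU)^{\ot^2}\ra(\La^{\dim V}T^*_yV)^{\ot^2}$ coincide. Here reducedness is indispensable: over the non-reduced scheme $X$ a section of a line bundle is not determined by its values at points, and $J_\Phi$ would in general depend on more of $\Phi$ than $\phi$.

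Next I would compute $J_\Phi(x)$ explicitly. Restricting \eq{mo4eq2} to $x$ gives a short exact sequence $0\ra T_xU\ra T_yV\ra N_{\sst UV}\vert_x\ra0$ with first map $\d\Phi_x$, whose top exterior powers produce the isomorphism $\rho_{\sst UV}(x):(\La^{\dim U}T^*_xU)\ot(\La^nN^*_{\sst UV}\vert_x)\ra\La^{\dim V}T^*_yV$; together with $\det(q_{\sst UV})(x)\in(\La^nN^*_{\sst UV}\vert_x)^{\ot^2}$, equation \eq{mo4eq3} gives $J_\Phi(x)=\rho_{\sst UV}(x)^{\ot^2}\ci\bigl(\id\ot\det(q_{\sst UV})(x)\bigr)$. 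Applying Theorem~\ref{mo4thm1}(i),(ii) to $\Phi$ near $x$ and passing to an \'etale neighbourhood (which does not change the pointwise quantity $J_\Phi(x)$), I may take $(V,g,\Phi)=(U\t\bA^n,\,f\boxplus(q_1z_1^2+\cdots+q_nz_n^2),\,\id_U\t0)$; a direct computation then gives $J_\Phi(x):\om^{\ot^2}\mapsto\bigl(\textstyle\prod_{a=1}^nq_a(x)\bigr)(\om\wedge\d z_1\wedge\cdots\wedge\d z_n)^{\ot^2}$ for a generator $\om$ of $\La^{\dim U}T^*_xU$. Two facts emerge: with $A:=\d\Phi_x(T_xU)$, the splitting $T_yV=A\op\langle\partial_{z_1},\ldots,\partial_{z_n}\rangle$ is orthogonal for $\Hess_yg=\Hess_xf\op 2\,\mathrm{diag}(q_1(x),\ldots,q_n(x))$; and $A\cap A^\perp=\ker(\Hess_yg\vert_A)$ contains $T_yY^\red$, because the partial derivatives of $f$ lie in $I_X$ and hence vanish to first order on $X^\red$. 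Thus $q_{\sst UV}\vert_x$ is, up to the fixed factor $2^{-n}$, the nondegenerate form induced by $\Hess_yg$ on $A^\perp/(A\cap A^\perp)\cong T_yV/A$, and $J_\Phi(x)$ is determined by the triple $(T_yV,\Hess_yg,A)$ and the isomorphism $\d\Phi_x:T_xU\ra A$.

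It remains to compare this datum for $\Phi$ and for $\ti\Phi$. Because $f=g\ci\Phi=g\ci\ti\Phi$ and $\Phi,\ti\Phi$ agree on $X$, at $x$ one has $\Hess_xf=\d\Phi_x^*(\Hess_yg)=\d\ti\Phi_x^*(\Hess_yg)$ (using $\d g_y=0$) and $\d\Phi_x=\d\ti\Phi_x$ on $T_xX^\red$; hence $\theta:=\d\ti\Phi_x\ci\d\Phi_x^{-1}$ identifies $(A,\Hess_yg\vert_A)$ with $(A',\Hess_yg\vert_{A'})$, where $A':=\d\ti\Phi_x(T_xU)$, and restricts to the identity on $T_yY^\red$ --- but $A$ and $A'$ need not be equal as subspaces of $T_yV$. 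So the remaining content is a purely linear-algebraic assertion: for the fixed quadratic space $(T_yV,\Hess_yg)$, the element of $(\La^{\dim V}T^*_yV)^{\ot^2}$ built from $\rho_{\sst UV}(x)$ and from $\det$ of the $\Hess_yg$-transverse form on $T_yV/A$, pulled back to $\La^{\dim U}T^*_xU$ along $\d\Phi_x$, does not depend on the choice of $(A,\d\Phi_x)$.

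This assertion should follow from multiplicativity of the determinant of a quadratic form along an orthogonal direct sum: in $W:=T_yV/T_yY^\red$ with its induced nondegenerate form one has $\det(\Hess_yg\bmod\ker)=\det(\Hess_yg\vert_{\bar A})\cdot\det(\Hess_yg\vert_{\bar A^\perp})$ where $\bar A=A/T_yY^\red$, and the $\bar A$-factor is cancelled precisely by the identification $\La^{\dim U}T^*_xU\cong\La^{\dim A}A^*$ via $\d\Phi_x$, leaving the basis-free scalar $\det(\Hess_yg\bmod\ker)$, which involves only $(T_yV,\Hess_yg)$. The hard part will be the bookkeeping forced by the \emph{degenerate} form $\Hess_yg$: passing to the nondegenerate quotient $W$, verifying that $q_{\sst UV}\vert_x$ really is the $\Hess_yg$-orthogonal transverse form on $T_yV/A$ rather than some other nondegenerate form (which is where Theorem~\ref{mo4thm1}(ii) is essential, and where reducedness of $X^\red$ again pays off, since one only needs the identity at points), and keeping the various determinant-line identifications consistent. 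This is essentially the content of \cite[Prop.~2.27]{Joyc2}.
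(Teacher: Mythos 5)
The paper itself contains no proof of Lemma~\ref{mo4lem1}: it is quoted from \cite[Prop.~2.27]{Joyc2}, with only the remark that the proof there ``uses the fact that $X^\red$ is reduced in an essential way''. So your argument can only be judged on its merits and against that remark. Your strategy --- reduce the identity $J_\Phi=J_{\ti\Phi}$ to closed points of $X^\red$ (exactly where reducedness enters), compute $J_\Phi(x)$ in the local model of Theorem~\ref{mo4thm1}, and recognise the answer as the intrinsic quantity ``$\det(\Hess_yg)$ divided by $\det(\Hess_xf)$, taken modulo kernels, with the kernels identified by $\d(\Phi\vert_X)_x$'' --- is the right one, and the linear-algebra identity you reduce to (multiplicativity of $\det$ over an orthogonal splitting, the $\bar A$-factor being cancelled against $\det(\Hess_xf)$ via the isometry $\d\Phi_x$) is true and does finish the proof.

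One correction is needed, and it matters precisely in the non-reduced case the lemma is designed for. You pass to $W=T_yV/T_yY^\red$ and assert the induced form is nondegenerate; but $\ker(\Hess_yg)$ is the scheme-theoretic Zariski tangent space $T_yY$ (as $Y=\Crit(g)$ is cut out by the partial derivatives of $g$), which strictly contains $T_yY^\red$ whenever $Y$ is non-reduced at $y$, so $\Hess_yg$ remains degenerate on $T_yV/T_yY^\red$. The argument should be run with $T_xX=\ker(\Hess_xf)$ and $T_yY=\ker(\Hess_yg)$ throughout: since $\Phi\vert_X=\ti\Phi\vert_X$ as morphisms of (possibly non-reduced) schemes, $\d\Phi_x$ and $\d\ti\Phi_x$ agree on all of $T_xX$, and both carry $T_xX$ isomorphically onto $T_yY=A\cap A^\perp=A'\cap A'^\perp$ via the fixed map $\d(\Phi\vert_X)_x$. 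This is what canonically matches the $\La^{\rm top}(\ker)$-factors of the two determinant lines and lets the cancellation go through. With that substitution, and the orthogonal-decomposition bookkeeping you defer written out, the proof is complete.
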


Using the notation of Definition \ref{mo4def1}, we can state our
main result:

\begin{thm} Let\/ $U,V$ be smooth\/ $\K$-schemes, $f:U\ra\bA^1,$
$g:V\ra\bA^1$ be regular, and\/ $X=\Crit(f),$ $Y=\Crit(g)$ as
$\K$-subschemes of\/ $U,V$. Let\/ $\Phi:U\hookra V$ be an embedding
of\/ $\K$-schemes with\/ $f=g\ci\Phi:U\ra\bA^1,$ and suppose
$\Phi\vert_X:X\ra V\supseteq Y$ is an isomorphism $\Phi\vert_X:X\ra
Y$. Then
\e
\Phi\vert_X^*\bigl(MF^{\rm mot, \phi}_{V,g}\bigr)=MF^{\rm mot,
\phi}_{U,f}\od \Up (P_\Phi)\quad\text{in\/ $\oM^{\hat\mu}_X,$}
\label{mo4eq4}
\e
where\/ $P_\Phi$ is as in Definition\/ {\rm\ref{mo4def1},} and\/
$\Up$ is defined in\/~\eq{mo2eq18}.
\label{mo4thm2}
\end{thm}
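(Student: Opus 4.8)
The plan is to localize the problem using Theorem \ref{mo4thm1}(i), which reduces everything to the standard quadratic stabilization model, and then invoke the relative vanishing cycle identity of Theorem \ref{mo2thm4}. First I would observe that both sides of \eq{mo4eq4} are elements of $\oM^{\hat\mu}_X$ supported on $X$, and that verifying equality is a Zariski-local question on $X$: if $\{U'_i\}$ is a Zariski open cover of $U$ with $X'_i=X\cap U'_i$, it suffices to check \eq{mo4eq4} restricted to each $X'_i$, since $\oM^{\hat\mu}_X$ has the expected localization behaviour (this is exactly the reason the rings $\oM^{\hat\mu}_Y$ were defined with pushforwards along open inclusions in mind, as noted after Definition \ref{mo2def8}). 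Also, by Lemma \ref{mo4lem1} the bundle $P_\Phi$ depends only on $U,V,X,Y,f,g$ and $\Phi|_X$, so we are free to replace $\Phi$ locally by any convenient model realizing the same data.

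Next, fix $x\in X$ and apply Theorem \ref{mo4thm1}(i) to obtain a Zariski open $U'\ni x$ in $U$, a smooth $\K$-scheme $V'$, morphisms $\jmath:V'\ra V$, $\Phi':U'\ra V'$, $\al:V'\ra U'$, $\be:V'\ra\bA^n$, and $q_1,\dots,q_n:U'\ra\bA^1\sm\{0\}$ with $\jmath$ and $\al\t\be$ \'etale, $\al\ci\Phi'=\id_{U'}$, $\be\ci\Phi'=0$, and the key identity \eq{mo4eq1}. Since $\al\t\be:V'\ra U'\t\bA^n$ is \'etale, and étale pullback of motivic vanishing cycles is the identity (the fact recorded at the end of Definition \ref{mo2def7}), I would transport the computation to $U'\t\bA^n$: writing $E'=U'\t\bA^n$ with bundle projection $\pi':E'\ra U'$, the function $g\ci\jmath$ pulls back from the function $f'\ci\pi' + q$ on $E'$, where $q=(q_1\ci\pi')z_1^2+\dots+(q_n\ci\pi')z_n^2$ is a nondegenerate quadratic form on the fibres of the trivial bundle $E'\ra U'$, and $f'=f|_{U'}$. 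Thus $\Phi|_{X'}^*(MF^{\rm mot,\phi}_{V,g})=MF^{\rm mot,\phi}_{E',f'\ci\pi'+q}|_{X'}$, using that $\al|_{Y'},\jmath|_{Y'}$ are étale and $\Phi\ci\al|_{Y'}=\jmath|_{Y'}$ to match up the base points on $X$.

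Now I would chain together Theorem \ref{mo2thm3} and Theorem \ref{mo2thm4}. Theorem \ref{mo2thm3} applied to $U',E',f',q$ gives
\[
MF^{\rm mot,\phi}_{E',f'\ci\pi'+q}=MF^{\rm mot,\phi}_{U',f'}\od\bigl(\bL^{\dim U'/2}\od MF^{\rm mot,\phi}_{E',q}\bigr),
\]
and Theorem \ref{mo2thm4} (which holds in $\oM^{\hat\mu}_{U'}$, the crucial place where we must leave $\cM^{\hat\mu}$) identifies $\bL^{\dim U'/2}\od MF^{\rm mot,\phi}_{E',q}=\Up(P')$, where $P'\ra U'$ is the principal $\Z_2$-bundle corresponding to $\bigl(\La^n(E'),\det q\bigr)$ under \eq{mo2eq19}. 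Restricting to $X'$ and using $MF^{\rm mot,\phi}_{U',f'}=MF^{\rm mot,\phi}_{U,f}|_{X'}$, this yields $\Phi|_{X'}^*(MF^{\rm mot,\phi}_{V,g})=MF^{\rm mot,\phi}_{U,f}\od\Up(P'|_{X'})$ in $\oM^{\hat\mu}_{X'}$. It remains to identify $P'|_{X'}$ with $P_\Phi|_{X'}$: by the last sentence of Definition \ref{mo4def1}, $P_\Phi$ corresponds to $\bigl(\La^nN_{\sst UV}|_X,\det(q_{\sst UV})\bigr)$ under \eq{mo2eq19}, and by Theorem \ref{mo4thm1}(ii) the isomorphism $\hat\be$ identifies $\langle\d z_1,\dots,\d z_n\rangle_{U'}$ with $N_{\sst UV}^*|_{U'}$ carrying $q_1 S^2\hat\be(\d z_1\ot\d z_1)+\dots+q_n S^2\hat\be(\d z_n\ot\d z_n)$ to $q_{\sst UV}|_{X'}$; taking top exterior powers, $\bigl(\La^n(E')|_{U'},\det q\bigr)$ is isomorphic to $\bigl(\La^nN_{\sst UV}|_{U'},\det q_{\sst UV}\bigr)$ over $X'$, hence $P'|_{X'}\cong P_\Phi|_{X'}$. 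Covering $X$ by such opens $X'$ and using that these local identities agree on overlaps (which follows since both sides are restrictions of globally defined objects $MF^{\rm mot,\phi}_{U,f}\od\Up(P_\Phi)$ and $\Phi|_X^*(MF^{\rm mot,\phi}_{V,g})$), equation \eq{mo4eq4} holds globally in $\oM^{\hat\mu}_X$.

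The main obstacle I anticipate is the patching step: ensuring that the local identification $P'|_{X'}\cong P_\Phi|_{X'}$ and the resulting equation in $\oM^{\hat\mu}_{X'}$ genuinely glue to a statement in $\oM^{\hat\mu}_X$. This is why Lemma \ref{mo4lem1} (independence of $P_\Phi$ from $\Phi$ away from $\Phi|_X$) and Theorem \ref{mo4thm1}(ii) (the \emph{global}, not merely local, nondegenerate form $q_{\sst UV}$ on $N_{\sst UV}|_X$) are indispensable — they guarantee that the locally-constructed bundles $P'$ are all restrictions of one global bundle $P_\Phi$, so that there is nothing to check on overlaps beyond restricting a global equality. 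A secondary subtlety is that the reduction via étale $\al\t\be$ must respect the $\hat\mu$-equivariant and $\od$-multiplicative structure; this is fine since étale pullback is a ring morphism for both $\cdot$ and $\od$ and commutes with $\Up$, but it should be stated carefully.
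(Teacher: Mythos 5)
Your proposal is correct and follows essentially the same route as the paper's proof: localize via Theorem \ref{mo4thm1}(i), use \'etale invariance of $MF^{\rm mot,\phi}$ to pass to the model $U'\t\bA^1{}^n$ with $f'\ci\pi'+\sum(q_i\ci\pi')z_i^2$, apply Theorems \ref{mo2thm3} and \ref{mo2thm4} to produce $\Up(P')$ with $P'$ corresponding to $(\O_{U'},q_1\cdots q_n)$, identify $P'\vert_{X'}\cong P_\Phi\vert_{X'}$ via Theorem \ref{mo4thm1}(ii), and glue over a Zariski cover. The only cosmetic difference is that you invoke Lemma \ref{mo4lem1} to frame the patching, whereas the paper simply notes that both sides of the local identity are restrictions of globally defined objects.
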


In Brav et al.\ \cite[Th.~5.4]{BBDJS} we proved an analogue of
Theorem \ref{mo4thm2} for perverse sheaves of vanishing cycles.

\subsection{Proof of Theorem \ref{mo4thm2}}
\label{mo41}

Suppose $U,V,f,g,X,Y,\Phi$ are as in Theorem \ref{mo4thm2}, and use
the notation $N_{\sst UV},q_{\sst UV}$ from Theorem
\ref{mo4thm1}(ii) and $J_\Phi,P_\Phi$ from Definition \ref{mo4def1}.
Let $x,U',V',\jmath,\Phi',\ab\al,\ab\be,\ab q_1,\ab\ldots,\ab
q_n,f',g',X',Y'$ be as in Theorem \ref{mo4thm1}(i). Then in
$\oM^{\hat\mu}_{X'}$ we have
\ea
\Phi\vert_X^*&\bigl(MF^{\rm mot, \phi}_{V,g}\bigr)\big\vert_{X'}=
\Phi'\vert_X^*\ci\jmath\vert_{Y'}^*\bigl(MF^{\rm mot, \phi}_{V,g}\bigr)
=\Phi'\vert_X^*\bigl(MF^{\rm mot,\phi}_{V',g'}\bigr)
\nonumber\\
&=\Phi'\vert_X^*\ci(\al\t\be)^*\bigl(MF^{\rm mot,\phi}_{U'\t\bA^n,f\ci
\pi_{U'}+\Si_{i=1}^n(q_i\ci\pi_{U'})\cdot (z_i^2\ci \pi_{\bA^n})}\bigr)
\nonumber\\
&=(\id_{X'}\t 0)^*\bigl(MF^{\rm mot,\phi}_{U',f'}\od
\bL^{\dim U/2}\od MF^{\rm mot,\phi}_{U'\t\bA^n,
\Si_{i=1}^n(q_i\ci\pi_{U'})\cdot (z_i^2\ci \pi_{\bA^n})}\bigr)
\nonumber\\
&=(\id_{X'}\t 0)^*\bigl(MF^{\rm mot,\phi}_{U',f'}\od \Up(P_{q_1\cdots q_n})
\bigr)=MF^{\rm mot,\phi}_{U',f'}\od \Up\bigl(P_{q_1\cdots q_n}
\vert_{X'}\bigr)
\nonumber\\
&=MF^{\rm mot,\phi}_{U',f'}\od \Up\bigl(P_\Phi\vert_{X'}\bigr)=
\bigl(MF^{\rm mot,\phi}_{U,f}\od \Up (P_\Phi)\bigr)\big\vert_{X'},
\label{mo4eq5}
\ea
using $\Phi\vert_{U'}=\jmath\ci\Phi'$ in the first step,
$\jmath:V'\ra V$ \'etale with $g'=g\ci\jmath$ in the second,
$\al\t\be:V'\ra U'\t\bA^n$ \'etale and \eq{mo4eq1} in the third, and
$\al\ci\Phi'=\id_{U'}$, $\be\ci\Phi'=0$, and Theorem \ref{mo2thm3}
in the fourth.

In the fifth step of \eq{mo4eq5}, we apply Theorem \ref{mo2thm4} to
the vector bundle $U'\t\bA^n\ra U'$ and nondegenerate quadratic form
$\sum_{i=1}^n(q_i\ci\pi_{U'})\cdot (z_i^2\ci \pi_{\bA^n})$, and we
write $P_{q_1\cdots q_n}\ra U'$ for the principal $\Z_2$-bundle
corresponding to $\bigl(\O_{U'},q_1\cdots q_n\bigr)$ under
\eq{mo2eq20}. The sixth step uses $MF^{\rm mot,\phi}_{U',f'}$
supported on $X'\cong X'\t\{0\}$, the seventh that $P_{q_1\cdots
q_n} \vert_{X'}\cong P_\Phi\vert_{X'}$ since Theorem
\ref{mo4thm1}(ii) implies an identification between $q_1\cdots q_n$
and $\det(q_{\sst UV})$ on $X'$ and $P_{q_1\cdots q_n} \vert_{X'},
P_\Phi\vert_{X'}$ parametrize square roots of $q_1\cdots q_n$ and
$\det(q_{\sst UV})$ on $X'$, and the eighth that $U'\subseteq U$ is
open with $f'=f\vert_{U'}$. Equation \eq{mo4eq5} proves the
restriction of \eq{mo4eq4} to the Zariski open set $X'\subseteq X$.
Since we can cover $X$ by such open $X'$, Theorem \ref{mo4thm2}
follows.

\section{Motivic vanishing cycles on d-critical loci}
\label{mo5}

In \S\ref{mo51} we introduce {\it d-critical loci\/} from Joyce
\cite{Joyc2}. Our main result Theorem \ref{mo5thm5}, which
associates a motive $MF_{X,s}\in\oM^{\hat\mu}_X$ to each oriented
d-critical locus $(X,s)$, is stated and discussed in \S\ref{mo52},
and proved in \S\ref{mo54}. Section \ref{mo53} describes a torus
localization formula for $MF_{X,s}$ due to Maulik~\cite{Maul}.

In \S\ref{mo51} we only need our $\K$-schemes to be locally of
finite type, not of finite type. In \S\ref{mo52}--\S\ref{mo54} we
take $\K$-schemes to be finite type, but discuss extension to the
locally of finite type case in Remark~\ref{mo5rem}.

\subsection{Background material on d-critical loci}
\label{mo51}

Here are the main definitions and results on d-critical loci, from
Joyce \cite[Th.s 2.1, 2.20, 2.28 \& Def.s 2.5, 2.18, 2.31 \& Ex.\
2.38]{Joyc2}. In fact \cite{Joyc2} develops two versions of the
theory, {\it algebraic d-critical loci\/} on $\K$-schemes and {\it
complex analytic d-critical loci\/} on complex analytic spaces, but
we discuss only the former.

\begin{thm} Let\/ $X$ be a $\K$-scheme, which must be locally of
finite type, but not necessarily of finite type. Then there exists a
sheaf\/ $\cS_X$ of\/ $\K$-vector spaces on $X,$ unique up to
canonical isomorphism, which is uniquely characterized by the
following two properties:
\begin{itemize}
\setlength{\itemsep}{0pt}
\setlength{\parsep}{0pt}
\item[{\bf(i)}] Suppose $R\subseteq X$ is Zariski open, $U$ is a
smooth\/ $\K$-scheme, and\/ $i:R\hookra U$ is a closed
embedding. Then we have an exact sequence of sheaves of\/
$\K$-vector spaces on $R\!:$
\begin{equation*}
\smash{\xymatrix@C=30pt{ 0 \ar[r] & I_{R,U} \ar[r] &
i^{-1}(\O_U) \ar[r]^{i^\sharp} & \O_X\vert_R \ar[r] & 0, }}
\end{equation*}
where $\O_X,\O_U$ are the sheaves of regular functions on $X,U,$
and\/ $i^\sharp$ is the morphism of sheaves of\/ $\K$-algebras
on $R$ induced by $i$.

There is an exact sequence of sheaves of\/ $\K$-vector spaces on
$R\!:$
\begin{equation*}
\xymatrix@C=20pt{ 0 \ar[r] & \cS_X\vert_R
\ar[rr]^(0.4){\io_{R,U}} &&
\displaystyle\frac{i^{-1}(\O_U)}{I_{R,U}^2} \ar[rr]^(0.4)\d &&
\displaystyle\frac{i^{-1}(T^*U)}{I_{R,U}\cdot i^{-1}(T^*U)}\,, }
\end{equation*}
where $\d$ maps $f+I_{R,U}^2\mapsto \d f+I_{R,U}\cdot
i^{-1}(T^*U)$.
\item[{\bf(ii)}] Let\/ $R\subseteq S\subseteq X$ be Zariski open,
$U,V$ be smooth\/ $\K$-schemes, $i:R\hookra U,$ $j:S\hookra V$
closed embeddings, and\/ $\Phi:U\ra V$ a morphism with\/
$\Phi\ci i=j\vert_R:R\ra V$. Then the following diagram of
sheaves on $R$ commutes:
\e
\begin{gathered}
\xymatrix@C=12pt{ 0 \ar[r] & \cS_X\vert_R \ar[d]^\id
\ar[rrr]^(0.4){\io_{S,V}\vert_R} &&&
\displaystyle\frac{j^{-1}(\O_V)}{I_{S,V}^2}\Big\vert_R
\ar@<-2ex>[d]^{i^{-1}(\Phi^\sharp)} \ar[rr]^(0.4)\d &&
\displaystyle\frac{j^{-1}(T^*V)}{I_{S,V}\cdot
j^{-1}(T^*V)}\Big\vert_R \ar@<-2ex>[d]^{i^{-1}(\d\Phi)} \\
 0 \ar[r] & \cS_X\vert_R \ar[rrr]^(0.4){\io_{R,U}} &&&
\displaystyle\frac{i^{-1}(\O_U)}{I_{R,U}^2} \ar[rr]^(0.4)\d &&
\displaystyle\frac{i^{-1}(T^*U)}{I_{R,U}\cdot i^{-1}(T^*U)}\,.
}\!\!\!\!\!\!\!{}
\end{gathered}
\label{mo5eq1}
\e
Here $\Phi:U\ra V$ induces $\Phi^\sharp: \Phi^{-1}(\O_V)\ra\O_U$
on $U,$ so we have
\e
i^{-1}(\Phi^\sharp):j^{-1}(\O_V)\vert_R=i^{-1}\ci
\Phi^{-1}(\O_V)\longra i^{-1}(\O_U),
\label{mo5eq2}
\e
a morphism of sheaves of\/ $\K$-algebras on $R$. As $\Phi\ci
i=j\vert_R,$ equation \eq{mo5eq2} maps $I_{S,V}\vert_R\ra
I_{R,U},$ and so maps $I_{S,V}^2\vert_R\ra I_{R,U}^2$. Thus
\eq{mo5eq2} induces the morphism in the second column of\/
\eq{mo5eq1}. Similarly, $\d\Phi:\Phi^{-1}(T^*V)\ra T^*U$ induces
the third column of\/~\eq{mo5eq1}.
\end{itemize}

There is a natural decomposition\/ $\cS_X=\cSz_X\op\K_X,$ where\/
$\K_X$ is the constant sheaf on $X$ with fibre $\K,$ and\/
$\cSz_X\subset\cS_X$ is the kernel of the composition
\begin{equation*}
\xymatrix@C=40pt{ \cS_X \ar[r] & \O_X
\ar[r]^(0.47){i_X^\sharp} & \O_{X^\red}, }
\end{equation*}
with\/ $X^\red$ the reduced\/ $\K$-subscheme of\/ $X,$ and\/
$i_X:X^\red\hookra X$ the inclusion.
\label{mo5thm1}
\end{thm}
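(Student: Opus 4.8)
The plan is to establish Theorem~\ref{mo5thm1} by first proving the \emph{local} existence and uniqueness of $\cS_X$ on Zariski open sets $R$ admitting a closed embedding $i:R\hookra U$ into a smooth $\K$-scheme, and then gluing. Locally, one simply \emph{defines} $\cS_X\vert_R$ by the exact sequence in (i), that is, $\cS_X\vert_R:=\ker\bigl(\d:i^{-1}(\O_U)/I_{R,U}^2\ra i^{-1}(T^*U)/(I_{R,U}\cdot i^{-1}(T^*U))\bigr)$. One then has to check this is independent, up to canonical isomorphism, of the choice of embedding $i:R\hookra U$. The standard device is to compare two embeddings $i:R\hookra U$ and $i':R\hookra U'$ by embedding both into $U\t U'$ diagonally: the projections $U\t U'\ra U$, $U\t U'\ra U'$ are smooth and compatible with $i,i'$, so property (ii) (which at this stage is a \emph{construction}, not yet a proven compatibility) gives canonical maps $\cS_X^{(U)}\vert_R\ra\cS_X^{(U\t U')}\vert_R$ and $\cS_X^{(U')}\vert_R\ra\cS_X^{(U\t U')}\vert_R$, and the main local lemma is that these are \emph{isomorphisms}. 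This is where the smoothness of $U,U'$ enters essentially: \'etale-locally a smooth morphism looks like a projection $U\t\bA^n\ra U$, and one computes directly that passing from $U$ to $U\t\bA^n$ with the embedding $R\hookra U\hookra U\t\bA^n$ (zero section) does not change the kernel defining $\cS_X$, because the extra coordinates $z_1,\dots,z_n$ contribute $\d z_j$ which die in the quotient only after being paired against the $I_{R,U}$-torsion, and a careful bookkeeping of $I^2$ versus $I\cdot T^*U$ shows the inclusion is an equality.

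Once the local objects $\cS_X\vert_R$ are canonically independent of the embedding, the gluing is formal: for overlapping $R,S$ one uses functoriality of the construction (the $U\t U'$ trick again, now with the two different ambient spaces on $R\cap S$) to produce canonical isomorphisms on overlaps satisfying the cocycle condition, hence a sheaf $\cS_X$ on all of $X$, unique up to canonical isomorphism. Here one needs that $X$, being locally of finite type, can be covered by Zariski opens $R$ each admitting such an embedding into a smooth $U$ — this is standard (affine opens embed into affine space). Property (ii) in the statement then holds by construction/compatibility. For the second assertion, the decomposition $\cS_X=\cSz_X\op\K_X$: one observes that the composite $\cS_X\ra\O_X\ra\O_{X^\red}$ admits a canonical splitting given by the constant functions $\K_X\hookra\cS_X$ — locally, a constant $c\in\K$ is the class of the constant function $c\in i^{-1}(\O_U)/I_{R,U}^2$, which manifestly lies in $\ker\d$ — and the splitting is compatible with the gluing maps since those are induced by $\K$-algebra morphisms $\Phi^\sharp$ which fix constants. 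Hence $\cSz_X:=\ker(\cS_X\ra\O_{X^\red})$ gives the claimed direct sum.

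The main obstacle is the local independence lemma — proving that the comparison maps $\cS_X^{(U)}\vert_R\ra\cS_X^{(U\t U')}\vert_R$ induced by the smooth projection are isomorphisms. Equivalently, one must show that for the zero-section embedding $R\hookra U\hookra U\t\bA^n$, with $I'=I_{R,U\t\bA^n}$ and $I=I_{R,U}$, the natural map $\ker\bigl(\d\colon i^{-1}(\O_U)/I^2\ra i^{-1}(T^*U)/I\cdot i^{-1}(T^*U)\bigr)\ra\ker\bigl(\d\colon i^{-1}(\O_{U\t\bA^n})/I'^2\ra i^{-1}(T^*(U\t\bA^n))/I'\cdot i^{-1}(T^*(U\t\bA^n))\bigr)$ is bijective. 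Injectivity is easy; surjectivity requires showing every element of the larger kernel can be adjusted, modulo $I'^2$, to one pulled back from $U$ — using that $I'=I+(z_1,\dots,z_n)$ and that the $\d z_j$-components of $\d$ being zero in the quotient forces the dependence on the $z_j$ to be absorbable into $I'^2$. This is a direct but slightly delicate computation in the completed local rings (or using Taylor expansion in the $z_j$), and it is the technical heart; everything else is bookkeeping with sheaves, functoriality of the $U\t U'$ trick, and cocycle conditions. (All of this is carried out in detail in Joyce~\cite{Joyc2}; here we only sketch it, since we use Theorem~\ref{mo5thm1} as a black box.)
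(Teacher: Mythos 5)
This theorem is not proved in the paper at all: it is quoted verbatim as background from Joyce \cite{Joyc2} (Theorem 2.1 there), so there is no in-paper proof to compare against. Your sketch is, in outline, the argument actually given in \cite{Joyc2}: define $\cS_X\vert_R$ as $\ker\d$ for one choice of embedding, prove independence of the embedding by comparing via $U\t U'$, glue, and split off $\K_X$ via the constant functions. Two small imprecisions are worth flagging. First, the comparison argument as you state it slightly conflates two maps: property (ii) applied to the smooth projection $\pi_U:U\t U'\ra U$ gives a map $\cS^{(U)}\vert_R\ra\cS^{(U\t U')}\vert_R$, whereas the zero-section computation you invoke applies to an \emph{embedding}; the clean route is to extend $i':R\ra U'$ locally to $\eta:U\ra U'$ (possible since $i$ is a closed embedding and $U'$ is smooth), so that the graph $(\id_U,\eta):U\ra U\t U'$ is an embedding locally isomorphic to a zero section $U\hookra U\t\bA^n$, and then use functoriality to see that the projection-induced and graph-induced maps are mutually inverse. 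Your zero-section computation itself is correct: writing $g=g_0(x)+\sum_jz_jg_j(x)+O(z^2)$, the vanishing of the $\d z_j$-components of $\d g$ modulo $I'$ forces $g_j\in I$, hence $\sum_jz_jg_j\in I'^2$, so $g\equiv g_0\bmod I'^2$ with $\d g_0\in I\cdot i^{-1}(T^*U)$. Second, for the decomposition $\cS_X=\cSz_X\op\K_X$ it is not enough that constants lie in $\ker\d$; you also need that the image of $\cS_X\ra\O_{X^\red}$ is \emph{exactly} the sheaf of locally constant functions, which follows because $\d f\in I_{R,U}\cdot i^{-1}(T^*U)$ forces $\d(f\vert_{R^\red})=0$ and, in characteristic zero, a function on a reduced scheme with vanishing differential is locally constant. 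With those points made explicit your outline is a faithful account of the proof in \cite{Joyc2}.
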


\begin{dfn} An {\it algebraic d-critical locus\/} over a
field $\K$ is a pair $(X,s)$, where $X$ is a $\K$-scheme, locally of
finite type, but not necessarily of finite type, and $s\in
H^0(\cSz_X)$ for $\cSz_X$ as in Theorem \ref{mo5thm1}, such that for
each $x\in X$, there exists a Zariski open neighbourhood $R$ of $x$
in $X$, a smooth $\K$-scheme $U$, a regular function
$f:U\ra\bA^1=\K$, and a closed embedding $i:R\hookra U$, such that
$i(R)=\Crit(f)$ as $\K$-subschemes of $U$, and
$\io_{R,U}(s\vert_R)=i^{-1}(f)+I_{R,U}^2$. We call the quadruple
$(R,U,f,i)$ a {\it critical chart\/} on~$(X,s)$.

Let $(X,s)$ be an algebraic d-critical locus, and $(R,U,f,i)$ a
critical chart on $(X,s)$. Let $U'\subseteq U$ be Zariski open, and
set $R'=i^{-1}(U')\subseteq R$, $i'=i\vert_{R'}:R'\hookra U'$, and
$f'=f\vert_{U'}$. Then $(R',U',f',i')$ is a critical chart on
$(X,s)$, and we call it a {\it subchart\/} of $(R,U,f,i)$. As a
shorthand we write~$(R',U',f',i')\subseteq (R,U,f,i)$.

Let $(R,U,f,i),(S,V,g,j)$ be critical charts on $(X,s)$, with
$R\subseteq S\subseteq X$. An {\it embedding\/} of $(R,U,f,i)$ in
$(S,V,g,j)$ is a locally closed embedding $\Phi:U\hookra V$ such
that $\Phi\ci i=j\vert_R$ and $f=g\ci\Phi$. As a shorthand we write
$\Phi: (R,U,f,i)\hookra(S,V,g,j)$. If $\Phi:(R,U,f,i)\hookra
(S,V,g,j)$ and $\Psi:(S,V,g,j)\hookra(T,W,h,k)$ are embeddings, then
$\Psi\ci\Phi:(R,U,i,e)\hookra(T,W,h,k)$ is also an embedding.
\label{mo5def1}
\end{dfn}

\begin{thm} Let\/ $(X,s)$ be an algebraic d-critical locus, and\/
$(R,U,f,i),\ab(S,\ab V,\ab g,\ab j)$ be critical charts on $(X,s)$.
Then for each\/ $x\in R\cap S\subseteq X$ there exist subcharts
$(R',U',f',i')\subseteq(R,U,f,i),$ $(S',V',g',j')\subseteq
(S,V,g,j)$ with\/ $x\in R'\cap S'\subseteq X,$ a critical chart\/
$(T,W,h,k)$ on $(X,s),$ and embeddings $\Phi:(R',U',f',i')\hookra
(T,W,h,k),$ $\Psi:(S',V',g',j')\hookra(T,W,h,k)$.
\label{mo5thm2}
\end{thm}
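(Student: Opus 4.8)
The statement is Zariski-local near the point $x\in R\cap S$, so the plan is to shrink freely throughout. First I would pass to a common Zariski open neighbourhood $P$ of $x$ with $P\subseteq R\cap S$, and to affine open subschemes $U'\subseteq U$, $V'\subseteq V$ with $i^{-1}(U')=j^{-1}(V')=P$. This reduces the problem to comparing two critical charts $(P,U',f',i')$ and $(P,V',g',j')$ on the same open set $P$, with $U',V'$ affine, and it then suffices to produce the desired common chart $(T,W,h,k)$ with $T=P$ and embeddings of $(P,U',f',i')$ and $(P,V',g',j')$ into it.

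The key is to build a single \emph{minimal} critical chart sitting inside both. By the splitting lemma for regular functions on smooth schemes (available via the methods of \cite{Joyc2}), after further shrinking $U'$ there is a Zariski-local isomorphism identifying $U'$ near $i'(P)$ with $N_1\t\bA^a$, $f'$ with $g_1\boxplus(z_1^2+\cdots+z_a^2)$, and $i'$ with $\ell_1\t 0$, where $g_1$ has vanishing differential and Hessian at $\ell_1(x)$; thus $(P,N_1,g_1,\ell_1)$ is a critical chart, \emph{minimal at\/} $x$, with $\dim N_1=\dim T_xX$ since $T_{\ell_1(x)}N_1=\ker(\Hess g_1|_{\ell_1(x)})=T_{\ell_1(x)}\Crit(g_1)\cong T_xX$. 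It presents $s|_P$: the compatibility diagram of Theorem \ref{mo5thm1}(ii), applied to the zero-section inclusion $N_1\hookra U'$, sends $\io_{P,U'}(s|_P)=f'+I_{P,U'}^2$ to $g_1+I_{P,N_1}^2$. Applying the same to $(P,V',g',j')$ produces a minimal chart $(P,N_2,g_2,\ell_2)$ at $x$.

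Next I would identify the two minimal charts near $x$, intertwining both the embeddings and the functions, so that $U'\cong N\t\bA^a$ and $V'\cong N\t\bA^b$ become stabilisations of a single chart $(P,N,g_0,\ell)$: since $\dim N_1=\dim N_2=\dim T_xX$ and both $\ell_i$ induce isomorphisms on Zariski tangent spaces at $x$, one first extends $\ell_2\ci\ell_1^{-1}$ to a morphism $N_1\ra N_2$, checks it is \'etale at $\ell_1(x)$, and shrinks — using that the diagonal of an \'etale morphism is open and closed — to an isomorphism $\theta$ with $\theta\ci\ell_1=\ell_2$; then one arranges $g_2\ci\theta=g_1$ by perturbing $\theta$ away from $\ell_1(P)$, the obstruction $g_1-g_2\ci\theta$ lying in $I_{\ell_1(P),N_1}^2$ and being removable order by order because the components of $\d g_2$ generate $I_{\ell_2(P),N_2}$. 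Now set $W=N\t\bA^a\t\bA^b$, $h=g_0\boxplus(z_1^2+\cdots+z_a^2)\boxplus(w_1^2+\cdots+w_b^2)$, and let $k:P\ra W$ be $\ell$ followed by the inclusion $N\hookra N\t\{0\}\t\{0\}$. Nondegeneracy of the quadratic forms gives $\Crit(h)=\Crit(g_0)\t\{0\}\t\{0\}=k(P)$ as schemes, and $h$ presents $s|_P$ via $k$ by Theorem \ref{mo5thm1}(ii) applied to the projection $\pi:W\ra N$, since $g_0\ci\pi$ and $h$ agree modulo $I_{k(P),W}^2$; so $(P,W,h,k)$ is a critical chart on $(X,s)$. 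Finally, the inclusions $\Phi:U'\cong N\t\bA^a\hookra W$, $(n,z)\mapsto(n,z,0)$, and $\Psi:V'\cong N\t\bA^b\hookra W$, $(n,w)\mapsto(n,0,w)$, are locally closed embeddings with $h\ci\Phi=f'$, $h\ci\Psi=g'$ and $\Phi\ci i'=k=\Psi\ci j'$, hence embeddings of critical charts $\Phi:(P,U',f',i')\hookra(P,W,h,k)$ and $\Psi:(P,V',g',j')\hookra(P,W,h,k)$. As $x$ was arbitrary, the theorem follows.

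I expect the main obstacle to be the third step: the uniqueness, up to isomorphism, of the minimal chart at $x$, and especially arranging $g_2\ci\theta=g_1$ exactly rather than merely modulo $I_{\ell_1(P),N_1}^2$. This is where the precise definition of $\cSz_X$ and the reduced-versus-infinitesimal distinction enter, and where one needs the perturbation argument above (carried out over the formal completion along $\ell_1(P)$ and then algebraised by an \'etale approximation); the splitting lemma invoked in the second step is also substantial, but it is essentially available from \cite{Joyc2}.
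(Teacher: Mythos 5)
First, a point of orientation: the paper does not prove Theorem~\ref{mo5thm2} at all --- it is quoted as background from \cite[Th.~2.20]{Joyc2} --- so the comparison here is with Joyce's proof rather than with anything in the present text. Your plan (shrink to a common open $P$, split each chart as a minimal chart at $x$ stabilized by a standard quadratic form, identify the two minimal charts, then take the joint stabilization $N\t\bA^a\t\bA^b$) is the right heuristic picture, but as executed it rests repeatedly on operations that are only available \'etale-locally or formally, whereas the theorem demands Zariski open subcharts $U'\subseteq U$, $V'\subseteq V$ (Definition~\ref{mo5def1}) and honest locally closed embeddings of schemes. Concretely: (i) the Zariski-local splitting $f'\sim g_1\boxplus(z_1^2+\cdots+z_a^2)$ is false --- diagonalizing a nondegenerate quadratic form needs square roots of units, and killing the cubic and higher terms in the $z$-directions needs a formal coordinate change. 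This is precisely the phenomenon behind Example~\ref{mo3ex1} ($x^2$ and $yx^2$ are not equivalent over any dense open subset of $\bA^1\sm\{0\}$), and it is why Theorem~\ref{mo4thm1} is stated with \'etale morphisms $\jmath$, $\al\t\be$ and unit coefficients $q_1,\ldots,q_n$, and why the correction factor $\Up(P_\Phi)$ in Theorem~\ref{mo4thm2} exists at all: if your step 2 held, $P_\Phi$ would always be trivial and much of \S\ref{mo4} would collapse. So the splitting lemma is not ``essentially available from \cite{Joyc2}'' in the form you need. (ii) An \'etale morphism $N_1\ra N_2$ cannot in general be shrunk to an open immersion in the Zariski topology (consider $z\mapsto z^2$ on $\bG_m$ near $z=1$), so ``shrink $\theta$ to an isomorphism'' does not go through. (iii) Your own fallback for forcing $g_2\ci\theta=g_1$ --- solve order by order on the formal completion and algebraize by \'etale approximation --- delivers an \'etale neighbourhood, not a Zariski one; and as \S\ref{mo1} stresses, for the motivic applications the gluing must be Zariski-local.

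The repair is not to normalize the functions at all. Joyce's argument in \cite{Joyc2} never passes through minimal charts or standard quadratic forms: after shrinking so that $R'=S'$ and $U',V'$ are affine, one works with the closed embedding $k=(i',j'):R'\ra U'\t V'$ and constructs the third chart $(T,W,h,k)$ on an open subset $W$ of $U'\t V'$, together with the two embeddings $\Phi,\Psi$, by \emph{extending} regular functions and morphisms from the closed subscheme $k(R')$ of the affine scheme $U'\t V'$ into the larger-dimensional ambient space, correcting $h$ within its class modulo $I_{k(R'),W}^2$ so that $\Crit(h)=k(R')$ exactly. Extension problems of this kind, unlike normal-form problems, can be solved after Zariski shrinking, which is why the theorem holds in the Zariski topology. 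Your minimal-chart picture is correct and useful \'etale-locally --- that is essentially the content of Theorem~\ref{mo4thm1} --- but it cannot be the engine of a proof of the Zariski statement.
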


\begin{thm} Let\/ $(X,s)$ be an algebraic d-critical locus, and\/
$X^\red\subseteq X$ the associated reduced\/ $\K$-scheme. Then there
exists a line bundle $K_{X,s}$ on $X^\red$ which we call the
\begin{bfseries}canonical bundle\end{bfseries} of\/ $(X,s),$ which
is natural up to canonical isomorphism, and is characterized by the
following properties:
\begin{itemize}
\setlength{\itemsep}{0pt}
\setlength{\parsep}{0pt}
\item[{\bf(i)}] If\/ $(R,U,f,i)$ is a critical chart on
$(X,s),$ there is a natural isomorphism
\e
\io_{R,U,f,i}:K_{X,s}\vert_{R^\red}\longra
i^*\bigl(K_U^{\ot^2}\bigr)\vert_{R^\red},
\label{mo5eq3}
\e
where $K_U=\La^{\dim U}T^*U$ is the canonical bundle of\/ $U$ in
the usual sense.
\item[{\bf(ii)}] Let\/ $\Phi:(R,U,f,i)\hookra(S,V,g,j)$ be an
embedding of critical charts on $(X,s)$. Then \eq{mo4eq3}
defines an isomorphism of line bundles on $\Crit(f)^\red:$
\begin{equation*}
J_\Phi:K_U^{\ot^2}\vert_{\Crit(f)^\red}\,{\buildrel\cong\over\longra}\,
\Phi\vert_{\Crit(f)^\red}^*\bigl(K_V^{\ot^2}\bigr).
\end{equation*}
Since $i:R\ra\Crit(f)$ is an isomorphism with\/ $\Phi\ci
i=j\vert_R,$ this gives
\begin{equation*}
i\vert_{R^\red}^*(J_\Phi):i^*\bigl(K_U^{\ot^2}\bigr)\big\vert_{R^\red}
\,{\buildrel\cong\over\longra}\,j^*\bigl(K_V^{\ot^2}\bigr)\big\vert_{R^\red},
\end{equation*}
and we must have
\e
\io_{S,V,g,j}\vert_{R^\red}=i\vert_{R^\red}^*(J_\Phi)\ci
\io_{R,U,f,i}:K_{X,s}\vert_{R^\red}\longra j^*
\bigl(K_V^{\ot^2}\bigr)\big\vert_{R^\red}.
\label{mo5eq4}
\e
\end{itemize}
\label{mo5thm3}
\end{thm}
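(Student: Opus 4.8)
The plan is to construct $K_{X,s}$ by descent (the statement is \cite[Th.~2.28]{Joyc2}): one glues the local models $i^*\bigl(K_U^{\ot 2}\bigr)\vert_{R^\red}$ over the cover of $X^\red$ by the reduced critical loci $R^\red$ of critical charts $(R,U,f,i)$ on $(X,s)$, using the isomorphisms $J_\Phi$ of \eq{mo4eq3} as transition data, and then reads off properties \textbf{(i)}, \textbf{(ii)} and uniqueness from the construction. By Definition \ref{mo5def1} every $x\in X$ lies in the open set $R$ of some critical chart, so the $R^\red$ cover $X^\red$; on each of them set $L_{R,U,f,i}:=i^*\bigl(K_U^{\ot 2}\bigr)\vert_{R^\red}$, a line bundle on $R^\red$.

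Given critical charts $(R,U,f,i)$, $(S,V,g,j)$ and a point $x\in R\cap S$, Theorem \ref{mo5thm2} supplies subcharts $(R',U',f',i')\subseteq(R,U,f,i)$, $(S',V',g',j')\subseteq(S,V,g,j)$ with $x\in R'\cap S'$, a further chart $(T,W,h,k)$, and embeddings $\Phi:(R',U',f',i')\hookra(T,W,h,k)$ and $\Psi:(S',V',g',j')\hookra(T,W,h,k)$. Since $U'\subseteq U$ is open we have $L_{R',U',f',i'}=L_{R,U,f,i}\vert_{R'^\red}$, and $i'\vert_{R'^\red}^*(J_\Phi)$ identifies it with $k^*\bigl(K_W^{\ot 2}\bigr)\vert_{R'^\red}$; similarly on the $S$-side via $\Psi$. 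Composing gives a candidate transition isomorphism $\Theta=\bigl(j'\vert_{R'^\red}^*(J_\Psi)\bigr)^{-1}\ci i'\vert_{R'^\red}^*(J_\Phi)$ from $L_{R,U,f,i}$ to $L_{S,V,g,j}$ over $(R'\cap S')^\red$.

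The main obstacle is to show $\Theta$ is independent of the auxiliary choices --- the subcharts, the dominating chart $(T,W,h,k)$, and the embeddings $\Phi,\Psi$ --- so that these local $\Theta$'s patch to one isomorphism $\Theta_{(R,U,f,i),(S,V,g,j)}$ on $(R\cap S)^\red$. I would use two inputs. First, Lemma \ref{mo4lem1}: for an embedding of critical charts $J_\Phi$ depends only on the ambient data and on the induced morphism of critical loci, and any embedding $(R,U,f,i)\hookra(S,V,g,j)$ restricts on $\Crit(f)$ to $j\vert_R\ci i^{-1}$, so all such embeddings induce the same $J$ (after shrinking so that the isomorphism hypothesis of Lemma \ref{mo4lem1} applies). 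Second, the functoriality $J_{\Xi\ci\Phi}=\Phi^*(J_\Xi)\ci J_\Phi$ under composition of embeddings, which I would verify from the formula \eq{mo4eq3} using functoriality of the conormal sequence \eq{mo4eq2} and of the quadratic forms $q_{\sst UV}$ of Theorem \ref{mo4thm1}. Given two candidate dominating charts, Theorem \ref{mo5thm2} again produces a chart dominating both near $x$, and the two inputs then force the two versions of $\Theta$ to agree. The same device --- dominate all charts in play by a common one near each point --- yields the cocycle identity $\Theta_{ST}\ci\Theta_{RS}=\Theta_{RT}$ on triple overlaps. Hence the $L_{R,U,f,i}$ glue to a line bundle $K_{X,s}$ on $X^\red$ together with tautological isomorphisms $\io_{R,U,f,i}:K_{X,s}\vert_{R^\red}\longra i^*\bigl(K_U^{\ot 2}\bigr)\vert_{R^\red}$, establishing \textbf{(i)}.

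For \textbf{(ii)}, in an embedding $\Phi:(R,U,f,i)\hookra(S,V,g,j)$ I would take $(S,V,g,j)$ itself as the dominating chart with $\Psi=\id_V$, so $J_\Psi=\id$ and the transition map is $\Theta_{(R,U,f,i),(S,V,g,j)}=i\vert_{R^\red}^*(J_\Phi)$, which is precisely relation \eq{mo5eq4} between $\io_{R,U,f,i}$ and $\io_{S,V,g,j}$. For uniqueness up to canonical isomorphism, if $(K',\io'_{R,U,f,i})$ is another solution then the local isomorphisms $\io_{R,U,f,i}^{-1}\ci\io'_{R,U,f,i}:K'\vert_{R^\red}\longra K_{X,s}\vert_{R^\red}$ agree on overlaps by property \textbf{(ii)} applied to both $K'$ and $K_{X,s}$ (reducing an arbitrary overlap to the embedded situation via Theorem \ref{mo5thm2}), hence glue to a canonical global isomorphism $K'\longra K_{X,s}$; this also gives the asserted naturality.
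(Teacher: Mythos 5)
This paper does not prove Theorem \ref{mo5thm3}: it is quoted verbatim as background from Joyce \cite[Th.~2.28]{Joyc2}, so there is no internal proof to compare against. Your descent construction --- glue the local models $i^*\bigl(K_U^{\ot^2}\bigr)\vert_{R^\red}$ over the cover of $X^\red$ by critical charts, with transition maps built from the $J_\Phi$ via a common dominating chart supplied by Theorem \ref{mo5thm2} --- is the standard argument and is essentially the one Joyce gives, and the two inputs you isolate (independence of $J_\Phi$ from the choice of $\Phi$ beyond the induced map of critical loci, i.e.\ Lemma \ref{mo4lem1}, and the functoriality $J_{\Xi\ci\Phi}=\Phi^*(J_\Xi)\ci J_\Phi$ under composition of embeddings) are precisely the technical lemmas proved there for this purpose. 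Two caveats. First, Lemma \ref{mo4lem1} and Definition \ref{mo4def1} as stated here assume $\Phi\vert_{\Crit(f)}$ is an isomorphism onto $\Crit(g)$, whereas for an embedding of critical charts with $R$ a proper open subset of $S$ it is only an open embedding onto $j(R)\subseteq\Crit(g)$, so your ``after shrinking'' step is genuinely needed and should be made explicit. Second, the composition law for $J$ is the real technical content of the whole theorem --- it amounts to the multiplicativity of $\det(q)$ with respect to the exact sequence $0\ra N_{\sst UV}\ra N_{\sst UW}\ra \Phi^*(N_{\sst VW})\ra 0$ of normal bundles over $\Crit(f)^\red$, together with compatibility of the $\rho$ maps --- and deferring it to ``a verification from \eq{mo4eq2} and the $q_{\sst UV}$'' leaves the hardest step unproved; everything else in your outline (cocycle identity by dominating all charts in play, property (ii) by taking $\Psi=\id_V$, uniqueness by gluing the local comparison isomorphisms) follows formally once that lemma is in place.
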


\begin{dfn} Let $(X,s)$ be an algebraic d-critical locus, and
$K_{X,s}$ its canonical bundle from Theorem \ref{mo5thm3}. An {\it
orientation\/} on $(X,s)$ is a choice of square root line bundle
$K_{X,s}^{1/2}$ for $K_{X,s}$ on $X^\red$. That is, an orientation
is a line bundle $L$ on $X^\red$, together with an isomorphism
$L^{\ot^2}=L\ot L\cong K_{X,s}$. A d-critical locus with an
orientation will be called an {\it oriented d-critical locus}.
\label{mo5def2}
\end{dfn}

\begin{ex} Let $X$ be a smooth $\K$-scheme. Then in Theorem
\ref{mo5thm1} we have $\cS_X\cong\K_X$ and $\cSz_X\cong 0$. The
section $s=0\in H^0(\cSz_X)$ makes $(X,0)$ into an algebraic
d-critical locus, covered by the critical chart
$(R,U,f,i)=(X,X,0,\id_X)$. Theorem \ref{mo5thm3}(i) for this chart
shows that $K_{X,0}\cong K_X^{\ot^2}$, where $K_X$ is the usual
canonical bundle of $X$. Thus, $(X,0)$ has a natural
orientation~$K_{X,0}^{1/2}=K_X$.

As we call $K_{X,0}$ the canonical bundle of $(X,0)$, one might have
expected $K_{X,0}\cong K_X$. The explanation is that as a derived
scheme, $\Crit(0:X\ra\bA^1)$ is not $X$, but the shifted cotangent
bundle $T^*X[1]$, and the degree $-1$ fibres of the projection
$T^*X[1]\ra X$ include an extra factor of $K_X$ in~$K_{X,0}$.
\label{mo5ex}
\end{ex}

In \cite[Th.~6.6]{BBJ} we show that algebraic d-critical loci are
classical truncations of objects in derived algebraic geometry known
as $-1$-{\it shifted symplectic derived schemes}, introduced by
Pantev, To\"en, Vaqui\'e and Vezzosi \cite{PTVV}.

\begin{thm}[Bussi, Brav and Joyce \cite{BBJ}] Suppose\/ $(\bs
X,\om)$ is a $-1$-shifted symplectic derived scheme over a field\/
$\K$ in the sense of Pantev et al.\ {\rm\cite{PTVV},} and let\/
$X=t_0(\bs X)$ be the associated classical\/ $\K$-scheme of\/ ${\bs
X}$. Then $X$ extends naturally to an algebraic d-critical locus\/
$(X,s)$. The canonical bundle $K_{X,s}$ from Theorem\/
{\rm\ref{mo5thm3}} is naturally isomorphic to the determinant line
bundle $\det(\bL_{\bs X})\vert_{X^\red}$ of the cotangent complex\/
$\bL_{\bs X}$ of\/~$\bs X$.
\label{mo5thm4}
\end{thm}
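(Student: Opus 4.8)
The plan is to deduce the statement from the \emph{Darboux-type structure theorem} for $-1$-shifted symplectic derived schemes (Bussi, Brav and Joyce \cite{BBJ}): every point of $\bs X$ has a Zariski open derived neighbourhood $\bs R\subseteq\bs X$ on which $(\bs X,\om)\vert_{\bs R}$ is equivalent, as a $-1$-shifted symplectic derived scheme, to the derived critical locus of a regular function $f:U\ra\bA^1$ on a smooth $\K$-scheme $U$, with $\d f$ vanishing along $\Crit(f)$, so that the classical truncation $t_0$ of this derived critical locus is $\Crit(f)$ with its natural scheme structure; moreover, any two such local models embed, compatibly with the symplectic forms, into a common one, as in Theorem \ref{mo5thm2}. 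Proving this structure theorem is the hard part: it means working with the homotopy-coherent de Rham, or negative cyclic, model of closed $2$-forms on $\bs X$ and running a Poincar\'e-lemma argument bringing $\om$ into standard Hamiltonian form, so that $(\bs X,\om)$ becomes a derived critical locus Zariski-locally and any two Hamiltonian presentations are dominated by a third, refining \cite{PTVV}. Granting this, I would (a) read off critical charts and a candidate section $s\in H^0(\cSz_X)$, (b) glue the local sections, and (c) identify $K_{X,s}$ with $\det(\bL_{\bs X})\vert_{X^\red}$.

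\emph{Charts and the section.} From an equivalence of $\bs R$ with the derived critical locus of $f:U\ra\bA^1$, passing to classical truncations gives $R=t_0(\bs R)$ together with a closed embedding $i:R\hookra U$ with $i(R)=\Crit(f)$, that is, a critical chart $(R,U,f,i)$; after adjusting $f$ by a locally constant function we may assume $f$ vanishes on $\Crit(f)^\red$. By Theorem \ref{mo5thm1}(i) the map $\io_{R,U}$ identifies $\cS_X\vert_R$ with the kernel of $\d:i^{-1}(\O_U)/I_{R,U}^2\ra i^{-1}(T^*U)/(I_{R,U}\cdot i^{-1}(T^*U))$; since $i(R)=\Crit(f)$ forces $\d f\in I_{R,U}\cdot i^{-1}(T^*U)$, the class $i^{-1}(f)+I_{R,U}^2$ lies in that kernel and so defines $s_{R,U,f,i}:=\io_{R,U}^{-1}\bigl(i^{-1}(f)+I_{R,U}^2\bigr)\in H^0(\cS_X\vert_R)$, which lies in $H^0(\cSz_X\vert_R)$ because $f\vert_{\Crit(f)^\red}=0$. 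By construction $(R,U,f,i)$ is then a critical chart for the candidate d-critical locus $(X,s)$.

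\emph{Gluing.} Given two charts $(R,U,f,i),(S,V,g,j)$ and a point of $R\cap S$, apply the common-refinement part of the structure theorem to obtain subcharts $(R',U',f',i')\subseteq(R,U,f,i)$ and $(S',V',g',j')\subseteq(S,V,g,j)$ and embeddings $\Phi:(R',U',f',i')\hookra(T,W,h,k)$, $\Psi:(S',V',g',j')\hookra(T,W,h,k)$ with $f'=h\ci\Phi$ and $g'=h\ci\Psi$. Since the map $(i')^{-1}(\Phi^\sharp)$ of \eq{mo5eq2} sends $k^{-1}(h)+I_{T,W}^2$ to $(i')^{-1}(f')+I_{R',U'}^2$, commutativity of \eq{mo5eq1} in Theorem \ref{mo5thm1}(ii) gives $s_{T,W,h,k}\vert_{R'}=s_{R',U',f',i'}=s_{R,U,f,i}\vert_{R'}$, and likewise $s_{T,W,h,k}\vert_{S'}=s_{S,V,g,j}\vert_{S'}$, so $s_{R,U,f,i}$ and $s_{S,V,g,j}$ agree on $R'\cap S'$. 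As such sets cover $R\cap S$, the local sections glue to a global $s\in H^0(\cSz_X)$, and $(X,s)$ is an algebraic d-critical locus having the $(R,U,f,i)$ among its critical charts.

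\emph{The canonical bundle.} Writing the derived critical locus of $f:U\ra\bA^1$ as the homotopy self-intersection $U\t^h_{\d f,T^*U,0}U$ and using the fibre-product formula for cotangent complexes gives $\bL_{\bs X}\vert_{\Crit(f)}\simeq\bigl[TU\vert_{\Crit(f)}\xrightarrow{\,\Hess f\,}T^*U\vert_{\Crit(f)}\bigr]$ in degrees $-1,0$, hence (using $\det(T^*U)\cong K_U$ and $\det(TU)\cong K_U^{-1}$) a canonical isomorphism $\eta_{R,U,f,i}:\det(\bL_{\bs X})\vert_{R^\red}\ra i^*(K_U^{\ot^2})\vert_{R^\red}$. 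Composing with the inverse of $\io_{R,U,f,i}$ from \eq{mo5eq3} gives local isomorphisms $\det(\bL_{\bs X})\vert_{R^\red}\ra K_{X,s}\vert_{R^\red}$; to glue them, by Theorem \ref{mo5thm2} it is enough to check that for an embedding $\Phi:(R,U,f,i)\hookra(S,V,g,j)$ of critical charts coming from a derived embedding one has $\eta_{S,V,g,j}\vert_{R^\red}=i\vert_{R^\red}^*(J_\Phi)\ci\eta_{R,U,f,i}$, with $J_\Phi$ as in \eq{mo4eq3}, since then \eq{mo5eq4} makes the local isomorphisms agree on overlaps. This identity is a direct computation in the stabilization normal form $g=f\boxplus(q_1z_1^2+\cdots+q_nz_n^2)$ of Theorem \ref{mo4thm1}: changing the presentation of $\bL_{\bs X}$ from $U$ to $V$ introduces the normal bundle $N_{\sst UV}$ through \eq{mo4eq2} and the Hessian of $g$ in the transverse directions, namely $q_{\sst UV}$, and the induced map on determinants is exactly $\rho_{\sst UV}^{\ot^2}\ci(\id\ot\det(q_{\sst UV}))=J_\Phi$. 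Gluing then produces the asserted global isomorphism $\det(\bL_{\bs X})\vert_{X^\red}\cong K_{X,s}$.
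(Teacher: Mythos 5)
The first thing to say is that this paper does not prove Theorem \ref{mo5thm4} at all: it is imported verbatim from Bussi--Brav--Joyce \cite[Th.~6.6]{BBJ}, so there is no in-paper argument to measure your proposal against. Judged on its own terms, your outline does reproduce the strategy actually used in \cite{BBJ}: a $-1$-shifted Darboux theorem puts $(\bs X,\om)$ Zariski-locally in the form of a derived critical locus $\Crit(f:U\ra\bA^1)$; the local section is $\io_{R,U}(s\vert_R)=i^{-1}(f)+I_{R,U}^2$, which lies in the image of $\io_{R,U}$ because $\d f\in I_{R,U}\cdot i^{-1}(T^*U)$ and in $\cSz_X$ after normalizing $f\vert_{\Crit(f)^\red}=0$; and $\det\bL_{\bs X}\vert_{R^\red}\cong i^*(K_U^{\ot^2})\vert_{R^\red}$ comes from the two-term presentation $[TU\ra T^*U]$ of $\bL_{\bs X}$ on a chart. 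All of that is correct and is how the cited proof goes.

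The caveat is that essentially the entire mathematical content sits inside the two statements you black-box, and one of them is used in a place where you cannot substitute the classical result of this paper. For the gluing of $s$ you invoke a ``common-refinement part of the structure theorem''; note that Theorem \ref{mo5thm2} is not available here, since it is a statement about critical charts of an already-constructed d-critical locus $(X,s)$ and using it would be circular. What is needed is a comparison of \emph{Darboux presentations} of $(\bs X,\om)$ at the derived level (in \cite{BBJ} this is done by reducing to minimal charts at a point and stabilizing), and that is a genuinely hard theorem, not a corollary of Theorem \ref{mo5thm1}(ii). Similarly, for the canonical bundle you assert $\eta_{S,V,g,j}\vert_{R^\red}=i\vert_{R^\red}^*(J_\Phi)\ci\eta_{R,U,f,i}$ as ``a direct computation''; this is the substantive point, and moreover once $s$ is constructed the overlaps are controlled by arbitrary classical embeddings of critical charts as in Theorem \ref{mo5thm2}, so you must either check the compatibility for those (using Lemma \ref{mo4lem1}, which says $J_\Phi$ depends only on $\Phi\vert_X$, together with the independence statement of Theorem \ref{mo5thm3}(ii)) or argue that every such embedding can be replaced by one induced from the derived geometry. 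So: right skeleton, consistent with the external source, but as written it is a plan that defers the Darboux theorem, the derived chart-comparison, and the determinant compatibility rather than a proof.
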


Pantev et al.\ \cite{PTVV} show that derived moduli schemes of
coherent sheaves on a Calabi--Yau 3-fold have $-1$-shifted
symplectic structures, giving \cite[Cor.~6.7]{BBJ}:

\begin{cor} Suppose $Y$ is a Calabi--Yau\/ $3$-fold over\/ $\K,$
and\/ $\cM$ is a classical moduli\/ $\K$-scheme of simple coherent
sheaves in $\coh(Y),$ or simple complexes of coherent sheaves in
$D^b\coh(Y),$ with perfect obstruction theory\/
$\phi:\cE^\bu\ra\bL_\cM$ as in Thomas\/ {\rm\cite{Thom}} or
Huybrechts and Thomas\/ {\rm\cite{HuTh}}. Then $\cM$ extends
naturally to an algebraic d-critical locus $(\cM,s)$. The canonical
bundle $K_{\cM,s}$ from Theorem\/ {\rm\ref{mo5thm3}} is naturally
isomorphic to $\det(\cE^\bu)\vert_{\cM^\red}$.
\label{mo5cor1}
\end{cor}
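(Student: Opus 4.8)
The plan is to deduce this from Theorem \ref{mo5thm4} together with the results of Pantev, To\"en, Vaqui\'e and Vezzosi \cite{PTVV} on $-1$-shifted symplectic structures, essentially along the lines indicated just before the statement. First I would invoke \cite{PTVV}: the trivialization $K_Y\cong\O_Y$ of the canonical bundle of the Calabi--Yau 3-fold $Y,$ together with Serre duality, equips the derived moduli stack $\mathbf{R}\mathrm{Perf}(Y)$ of perfect complexes on $Y$ (and similarly for objects in $\coh(Y),$ resp.\ $D^b\coh(Y)$) with a $(-1)$-shifted symplectic form, the shift being $2-\dim Y=-1$. Restricting to the open derived substack of simple objects and passing to its rigidification, which quotients out the $\bG_m$ of scalar automorphisms, yields a derived $\K$-scheme $\bs\cM$ carrying an induced $(-1)$-shifted symplectic form $\om,$ whose classical truncation $t_0(\bs\cM)$ is the moduli scheme $\cM$ of the statement. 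Applying Theorem \ref{mo5thm4} to $(\bs\cM,\om)$ then produces the algebraic d-critical structure $(\cM,s)$ together with a natural isomorphism $K_{\cM,s}\cong\det(\bL_{\bs\cM})\vert_{\cM^\red}$.

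It then remains to identify $\det(\bL_{\bs\cM})\vert_{\cM^\red}$ with $\det(\cE^\bu)\vert_{\cM^\red}$. Here I would argue that $\bL_{\bs\cM}\vert_\cM\ra\bL_\cM$ is, up to canonical isomorphism of obstruction theories, the perfect obstruction theory $\cE^\bu\ra\bL_\cM$ of Thomas \cite{Thom} and Huybrechts--Thomas \cite{HuTh}. Indeed, since $\bs\cM$ is a derived scheme its cotangent complex $\bL_{\bs\cM}$ has cohomology in degrees $\le 0,$ while the $(-1)$-shifted symplectic form $\om$ gives a self-duality $\bL_{\bs\cM}^\vee\cong\bL_{\bs\cM}[-1];$ these two facts together force $\bL_{\bs\cM}$ to have tor-amplitude $[-1,0],$ so $\bL_{\bs\cM}\vert_\cM\ra\bL_\cM$ is a perfect obstruction theory. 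By the construction of $\bs\cM,$ its shifted tangent complex is the trace-free part of the derived endomorphism complex of the universal object, which is precisely the dual of $\cE^\bu;$ hence the two obstruction theories agree and their determinant line bundles coincide. Composing $\det(\cE^\bu)\cong\det(\bL_{\bs\cM})\vert_\cM$ with the isomorphism from Theorem \ref{mo5thm4} and restricting to $\cM^\red$ gives $K_{\cM,s}\cong\det(\cE^\bu)\vert_{\cM^\red},$ as wanted.

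The hard part is the comparison just used: one must check that the derived moduli scheme of \cite{PTVV}, after restriction to simple objects and rigidification, really does recover the moduli scheme $\cM$ equipped with \emph{this} obstruction theory, i.e.\ that the Pantev et al.\ cotangent complex matches the explicitly constructed obstruction theory of \cite{Thom,HuTh}. This is by now essentially standard but genuinely uses the deformation theory of (complexes of) coherent sheaves and Serre duality on $Y$. A secondary technical point is that \cite{PTVV} construct the $(-1)$-shifted symplectic form on the Artin \emph{stack} of perfect complexes, so one must pass to the rigidified derived scheme of simple objects and verify that the form descends; this, and the fact that $\bs\cM$ is then a scheme rather than a stack with $t_0(\bs\cM)=\cM,$ is where the simplicity hypothesis on $\cM$ in the statement is used.
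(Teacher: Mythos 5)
Your proposal is correct and takes essentially the same route as the paper: the corollary is deduced from the PTVV result that derived moduli of (complexes of) coherent sheaves on a Calabi--Yau 3-fold carry a $-1$-shifted symplectic structure, combined with Theorem \ref{mo5thm4}, with the remaining details (restriction to simple objects, rigidification by $\bG_m$, and the identification of $\det(\bL_{\bs\cM})$ with $\det(\cE^\bu)$) deferred to \cite[Cor.~6.7]{BBJ}. Your filled-in comparison of the PTVV cotangent complex with the Thomas/Huybrechts--Thomas obstruction theory is exactly the content delegated to that reference.
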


Here we call $F\in\coh(Y)$ {\it simple\/} if $\Hom(F,F)=\K$, and
$F^\bu$ in $D^b\coh(Y)$ {\it simple\/} if $\Hom(F^\bu,F^\bu)=\K$ and
$\mathop{\rm Ext}^{<0}(F^\bu,F^\bu)=0$. Thus, d-critical loci will
have applications in Donaldson--Thomas theory for Calabi--Yau
3-folds \cite{JoSo,KoSo1,KoSo2,Thom}. Orientations on $(\cM,s)$ are
closely related to {\it orientation data\/} in the work of
Kontsevich and Soibelman~\cite{KoSo1,KoSo2}.

Pantev et al.\ \cite{PTVV} also show that derived intersections
$L\cap M$ of (derived) algebraic Lagrangians $L,M$ in an algebraic
symplectic manifold $(S,\om)$ have $-1$-shifted symplectic
structures, so that Theorem \ref{mo5thm4} gives them the structure
of algebraic d-critical loci. Thus we may
deduce~\cite[Cor.~6.8]{BBJ}:

\begin{cor} Suppose $(S,\om)$ is an algebraic symplectic manifold
over $\K,$ and\/ $L,M$ are algebraic Lagrangians in $S$. Then the
intersection $X=L\cap M,$ as a $\K$-subscheme of\/ $S,$ extends
naturally to an algebraic d-critical locus\/ $(X,s)$. The canonical
bundle $K_{X,s}$ from Theorem\/ {\rm\ref{mo5thm3}} is isomorphic
to\/ $K_L\vert_{X^\red}\ot K_M\vert_{X^\red}$.

\label{mo5cor2}
\end{cor}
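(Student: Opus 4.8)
The plan is to realise $X$ as the classical truncation of a $-1$-shifted symplectic derived scheme and then apply Theorem \ref{mo5thm4}. An algebraic symplectic manifold $(S,\om)$ is a $0$-shifted symplectic derived $\K$-scheme, and algebraic Lagrangians $L,M\subseteq S$ carry essentially unique Lagrangian structures in the sense of Pantev et al.\ \cite{PTVV}; their theorem on Lagrangian intersections then makes the derived fibre product $\bs X=\bs L\times_{\bs S}\bs M$ into a $-1$-shifted symplectic derived $\K$-scheme. Since classical truncation commutes with derived fibre products, $t_0(\bs X)=L\times_SM=X$ with its scheme structure as a $\K$-subscheme of $S$. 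By Theorem \ref{mo5thm4}, $X$ then extends naturally to an algebraic d-critical locus $(X,s)$, with $K_{X,s}\cong\det(\bL_{\bs X})\vert_{X^\red}$. It remains to identify this determinant line bundle, which is where the actual content lies.

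For this I would run a standard cotangent complex computation. Write $p:\bs X\to\bs L$, $q:\bs X\to\bs M$, and $r:\bs X\to\bs S=S$ for the projections in the homotopy cartesian square with edges $a:L\hookra S$ and $b:M\hookra S$, so $r=a\ci p=b\ci q$. Base change for the cotangent complex gives $\bL_{\bs X/\bs M}\simeq p^*\bL_{\bs L/\bs S}$, and taking determinants in the transitivity triangles $q^*\bL_{\bs M}\to\bL_{\bs X}\to\bL_{\bs X/\bs M}$ and $a^*\bL_{\bs S}\to\bL_{\bs L}\to\bL_{\bs L/\bs S}$ yields
\[
\det(\bL_{\bs X})\cong p^*\det(\bL_{\bs L})\ot q^*\det(\bL_{\bs M})\ot r^*\det(\bL_{\bs S})^{-1}.
\]
As $S,L,M$ are smooth classical schemes we have $\bL_{\bs S}=\Om^1_S$, $\bL_{\bs L}=\Om^1_L$, $\bL_{\bs M}=\Om^1_M$, with determinants $K_S,K_L,K_M$; restricting to $X^\red$ and noting that $p,q,r$ there factor through $X\hookra S$, this gives $\det(\bL_{\bs X})\vert_{X^\red}\cong K_L\vert_{X^\red}\ot K_M\vert_{X^\red}\ot K_S^{-1}\vert_{X^\red}$.

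Finally I would use the Lagrangian condition to remove the factor $K_S^{-1}$. For $L\subseteq S$ Lagrangian the symplectic form induces an isomorphism $N_{L/S}\cong\Om^1_L$, so the conormal sequence gives $K_S\vert_L=\det(\Om^1_S\vert_L)\cong\det(N^*_{L/S})\ot K_L\cong K_L^{-1}\ot K_L\cong\O_L$; in particular $K_S\vert_X\cong\O_X$, and likewise on $X^\red$. Hence $K_{X,s}\cong\det(\bL_{\bs X})\vert_{X^\red}\cong K_L\vert_{X^\red}\ot K_M\vert_{X^\red}$, as claimed. Everything here is formal once Theorem \ref{mo5thm4} and \cite{PTVV} are granted; the only points that need care are the sign conventions in the transitivity and base-change triangles (which only determine whether $K_S$ enters with exponent $+1$ or $-1$) and the verification that $t_0(\bs X)$ is the scheme-theoretic intersection rather than a thickening of it, neither of which is a genuine obstacle. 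This is Bussi, Brav and Joyce \cite[Cor.~6.8]{BBJ}.
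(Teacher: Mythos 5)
Your proposal is correct and follows essentially the same route as the paper: the paper deduces this corollary by combining the Pantev--To\"en--Vaqui\'e--Vezzosi result that derived Lagrangian intersections are $-1$-shifted symplectic with Theorem \ref{mo5thm4}, deferring the identification $\det(\bL_{\bs X})\vert_{X^\red}\cong K_L\vert_{X^\red}\ot K_M\vert_{X^\red}$ to \cite[Cor.~6.8]{BBJ}. Your transitivity/base-change computation of the determinant and the use of the Lagrangian condition to trivialize the $K_S^{-1}$ factor correctly supply the details that the paper leaves to that reference.
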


\subsection{The main result, and applications}
\label{mo52}

Here is our main result, which will be proved in~\S\ref{mo54}.

\begin{thm} Let\/ $(X,s)$ be a finite type algebraic d-critical
locus with a choice of orientation $K_{X,s}^{1/2}$. There exists a
unique motive $MF_{X,s}\in\oM^{\hat\mu}_X$ with the property that
if\/ $(R,U,f,i)$ is a critical chart on $(X,s),$ then
\e
MF_{X,s}\vert_R=i^*\bigl(MF_{U,f}^{\rm mot,\phi}\bigr)\od\Up
(Q_{R,U,f,i})\quad\text{in\/ $\oM^{\hat\mu}_R,$}
\label{mo5eq5}
\e
where $Q_{R,U,f,i}\ra R$ is the principal\/ $\Z_2$-bundle
parametrizing local isomorphisms $\al:K_{X,s}^{1/2}
\vert_{R^\red}\ra i^*(K_U)\vert_{R^\red}$ with\/ $\al\ot\al=
\io_{R,U,f,i},$ for $\io_{R,U,f,i}$ as in\/ \eq{mo5eq3}.
\label{mo5thm5}
\end{thm}

\begin{rem} The theory of algebraic d-critical loci $(X,s)$ in
\cite{Joyc2} is developed for $\K$-schemes $X$ locally of finite
type, but not necessarily of finite type. The construction of
$MF_{X,s}$ in Theorem \ref{mo5thm5} is local in $X$. Thus, we can
easily extend Theorem \ref{mo5thm5} to $X$ only locally of finite
type, provided we have suitable generalizations of the motivic rings
$K_0(\Sch_X),\ab\cM_X,\ab K_0^{\hat\mu}(\Sch_X),\ab
\cM^{\hat\mu}_X,\oM^{\hat\mu}_X$ of \S\ref{mo2} to $\K$-schemes $X$
only locally of finite type.

Here are two natural ways to extend the groups $K_0(\Sch_X)$ to $X$ locally of finite type, based on the `stack functions' $\mathop{\rm SF}(X)$ and
`local stack functions' $\mathop{\rm SF}(X)$ of Joyce \cite[Def.s 3.1 \& 3.9]{Joyc1}. We can also generalize $\cM_X,\ab K_0^{\hat\mu}(\Sch_X),\ab
\cM^{\hat\mu}_X$ and $\oM^{\hat\mu}_X$ in the same ways.
\begin{itemize}
\setlength{\itemsep}{0pt}
\setlength{\parsep}{0pt}
\item[(i)] We could define $K_0(\Sch_X)$ to be generated by symbols $[S]$ for $S$ a finite type $\K$-scheme and $\Pi_S^X:S\ra X$ a morphism, with relation $[S]=[T]+[S\sm T]$ if $T\subseteq S$ is a closed $\K$-subscheme.
Note that $S$ must be finite type as a $\K$-scheme, {\it not\/} as an $X$-scheme.

If $X$ is not of finite type then $K_0(\Sch_X)$ is a ring without identity, as $[X]$ is not an element of $K_0(\Sch_X)$.

If $X,Y$ are locally of finite type and $\phi:X\ra Y$ a morphism, then pushforwards $\phi_*:K_0(\Sch_X)\ra K_0(\Sch_Y)$ are defined for arbitrary $\phi$, but pullbacks $\phi^*:K_0(\Sch_Y)\ra K_0(\Sch_X)$ are
defined only for $\phi$ of finite type.
\item[(ii)] We could define elements of $K_0(\Sch_X)$ to be $\sim$-equivalence classes of sums $\sum_{i\in I}c_i[S_i]$, where $I$ is a possibly infinite indexing set, $\Pi_{S_i}^X:S_i\ra X$ is a finite type morphism of $\K$-schemes for each $i\in I$ (so that $S_i$ is locally of finite type as a $\K$-scheme), and $c_i\in\Z$ for $i\in I$, such that for any finite type $\K$-subscheme $X'\subseteq X$, we have $S_i\t_XX'\ne\es$ for only finitely many $i\in I$. Define an equivalence relation $\sim$ on such sums by $\sum_{i\in I}c_i[S_i]\sim\sum_{j\in J}d_j[T_j]$ if for all finite type $\K$-subschemes $X'\subseteq X$, we have $\sum_{i\in I}c_i[S_i\t_XX']=\sum_{j\in J}d_j[T_j\t_XX']$ in $K_0(\Sch_{X'})$, where $K_0(\Sch_{X'})$ is as in \S\ref{mo21} as $X'$ is of finite type, and the sums make sense as there are only finitely many nonzero terms.

Then $K_0(\Sch_X)$ is a ring with identity $[X]$, since $\id_X:X\ra X$ is a finite type morphism. Pullbacks $\phi^*:K_0(\Sch_Y)\ra K_0(\Sch_X)$ are defined for arbitrary $\phi:X\ra Y$, but pushforwards $\phi_*:K_0(\Sch_X)\ra K_0(\Sch_Y)$ are defined only for $\phi$ of finite type.
\end{itemize}

For the purposes of generalizing Theorem \ref{mo5thm5}, (i) does not work -- for $X$ only locally of finite type, $MF_{X,s}$ would in general not be supported on a finite type subscheme of $X$, and so could not be an element of $\oM^{\hat\mu}_X$ defined as in (i). But (ii) does work, and defining $\oM^{\hat\mu}_X$ using the method of (ii), Theorem \ref{mo5thm5} extends to $X$ locally of finite type in a straightforward way. Note that we cannot push $MF_{X,s}$ forward to $\oM_\K$ if $X$ is not of finite type, since $\pi:X\ra\Spec\K$ is not a finite type morphism, and $\pi_*:\oM_X\ra\oM_\K$ is not defined.
\label{mo5rem}
\end{rem}

In Brav et al.\ \cite[Th.~6.9]{BBDJS} we proved an analogue of
Theorem \ref{mo5thm5} for perverse sheaves of vanishing cycles.
Theorem \ref{mo5thm4} and Corollaries \ref{mo5cor1} and
\ref{mo5cor2} imply:

\begin{cor} Let\/ $(\bs X,\om)$ be a $-1$-shifted symplectic
derived scheme over\/ $\K$ in the sense of Pantev et al.\
{\rm\cite{PTVV},} and\/ $X=t_0(\bs X)$ the associated classical\/
$\K$-scheme, assumed of finite type. Suppose we are given a square
root\/ $\smash{\det(\bL_{\bs X})\vert_X^{1/2}}$ for $\det(\bL_{\bs
X})\vert_X$. Then we may define a natural motive $MF_{\bs X,\om}\in
\oM^{\hat\mu}_X$.
\label{mo5cor3}
\end{cor}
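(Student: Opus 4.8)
The plan is to derive Corollary \ref{mo5cor3} formally from Theorem \ref{mo5thm4} of Bussi--Brav--Joyce together with our Theorem \ref{mo5thm5}; the only thing to check is that the orientation data hypothesised on $\bs X$ really produces an orientation, in the sense of Definition \ref{mo5def2}, on the associated algebraic d-critical locus. First I would apply Theorem \ref{mo5thm4}: the classical truncation $X=t_0(\bs X)$ carries a canonical section $s\in H^0(\cSz_X)$, natural in $\bs X$, making $(X,s)$ an algebraic d-critical locus, and there is a natural isomorphism $\theta:K_{X,s}\ra\det(\bL_{\bs X})\vert_{X^\red}$ of line bundles on $X^\red$. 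Since $X=t_0(\bs X)$ is assumed of finite type, $(X,s)$ is a finite type algebraic d-critical locus, so Theorem \ref{mo5thm5} will apply once we fix an orientation.

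To obtain the orientation, restrict the given square root $\det(\bL_{\bs X})\vert_X^{1/2}$, which is a line bundle on $X$ with a chosen isomorphism of its square to $\det(\bL_{\bs X})\vert_X$, along the closed embedding $X^\red\hookra X$; this yields a line bundle on $X^\red$ whose square is isomorphic to $\det(\bL_{\bs X})\vert_{X^\red}$, and composing this isomorphism with $\theta^{-1}$ gives a line bundle $K_{X,s}^{1/2}$ on $X^\red$ together with an isomorphism $K_{X,s}^{1/2}\ot K_{X,s}^{1/2}\cong K_{X,s}$, i.e.\ an orientation on $(X,s)$. Theorem \ref{mo5thm5} applied to this oriented finite type d-critical locus produces a motive $MF_{X,s}\in\oM^{\hat\mu}_X$, characterised by \eq{mo5eq5} on critical charts, and I would set $MF_{\bs X,\om}:=MF_{X,s}$. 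For naturality, an equivalence of $-1$-shifted symplectic derived schemes respecting the symplectic forms induces, by the naturality clauses in Theorem \ref{mo5thm4}, an isomorphism of the associated d-critical loci compatible with the isomorphisms $\theta$ and with the chosen square roots, hence carrying one orientation to the other; the uniqueness part of Theorem \ref{mo5thm5} then forces the resulting identification of the rings $\oM^{\hat\mu}_X$ to match up the two motives. (The analogous corollaries for moduli schemes of coherent sheaves on a Calabi--Yau $3$-fold, and for intersections of algebraic Lagrangians, follow in exactly the same way from Theorem \ref{mo5thm5} using Corollaries \ref{mo5cor1} and \ref{mo5cor2} in place of Theorem \ref{mo5thm4}.)

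I do not expect any real obstacle at the level of the corollary: its entire content sits inside Theorems \ref{mo5thm4} and \ref{mo5thm5}, the former providing the passage from derived symplectic geometry to d-critical loci and the latter the motive. The one genuine difficulty, inherited from Theorem \ref{mo5thm5} (whose proof lies outside this excerpt), is that $MF_{X,s}$ must be glued from the local models $i^*(MF_{U,f}^{\rm mot,\phi})\od\Up(Q_{R,U,f,i})$ over critical charts: showing these agree on overlaps requires Theorem \ref{mo5thm2} to reduce to the case of one chart embedded in another, Theorem \ref{mo4thm2} to control that case, and the factorisation \eq{mo5eq4} of canonical-bundle isomorphisms to match up the correcting $\Z_2$-bundles $Q_{R,U,f,i}$ and $P_\Phi$ --- and this matching only closes up after passing to the quotient ring $\oM^{\hat\mu}_X$, where $\Up(P\ot_{\Z_2}Q)=\Up(P)\od\Up(Q)$ holds. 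That is precisely why orientations, and the ring $\oM^{\hat\mu}_X$ rather than $\cM^{\hat\mu}_X$, appear in the corollary.
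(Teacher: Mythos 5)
Your proposal is correct and follows exactly the route the paper intends: Corollary \ref{mo5cor3} is stated as an immediate consequence of combining Theorem \ref{mo5thm4} (which produces the d-critical locus $(X,s)$ and the isomorphism $K_{X,s}\cong\det(\bL_{\bs X})\vert_{X^\red}$) with Theorem \ref{mo5thm5}, transporting the given square root of $\det(\bL_{\bs X})\vert_X$ into an orientation $K_{X,s}^{1/2}$ and setting $MF_{\bs X,\om}:=MF_{X,s}$. Your extra care about restricting to $X^\red$ and about naturality is a correct filling-in of details the paper leaves implicit.
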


\begin{cor} Suppose $Y$ is a Calabi--Yau\/ $3$-fold over\/ $\K,$
and\/ $\cM$ is a finite type moduli\/ $\K$-scheme of simple coherent
sheaves in $\coh(Y),$ or simple complexes of coherent sheaves in
$D^b\coh(Y),$ with obstruction theory\/ $\phi:\cE^\bu\ra\bL_\cM$ as
in Thomas\/ {\rm\cite{Thom}} or Huybrechts and Thomas\/
{\rm\cite{HuTh}}. Suppose we are given a square root\/
$\det(\cE^\bu)^{1/2}$ for $\det(\cE^\bu)$. Then we may define a
natural motive $MF_\cM\in\oM^{\hat\mu}_\cM.$
\label{mo5cor4}
\end{cor}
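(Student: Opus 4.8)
The plan is to deduce Corollary \ref{mo5cor4} as a short formal consequence of Theorem \ref{mo5thm5}, using Corollary \ref{mo5cor1} to place a d-critical structure on $\cM$ and using the given square root to orient it.

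First I would invoke Corollary \ref{mo5cor1}: under the hypotheses of the present corollary, $\cM$ extends naturally to an algebraic d-critical locus $(\cM,s)$, and since $\cM$ is assumed of finite type, so is $(\cM,s)$. Corollary \ref{mo5cor1} moreover supplies a natural isomorphism $K_{\cM,s}\cong\det(\cE^\bu)\vert_{\cM^\red}$, where $K_{\cM,s}$ is the canonical bundle of $(\cM,s)$ from Theorem \ref{mo5thm3} and $\det(\cE^\bu)$ is the determinant line bundle of the obstruction theory, a line bundle on $\cM$.

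Next I would build an orientation on $(\cM,s)$. By hypothesis we are given a square root $\det(\cE^\bu)^{1/2}$ for $\det(\cE^\bu)$, that is, a line bundle $L$ on $\cM$ together with an isomorphism $L^{\ot^2}\cong\det(\cE^\bu)$. Restricting to $\cM^\red$ and composing with the natural isomorphism $\det(\cE^\bu)\vert_{\cM^\red}\cong K_{\cM,s}$ from Corollary \ref{mo5cor1} yields an isomorphism $(L\vert_{\cM^\red})^{\ot^2}\cong K_{\cM,s}$, which by Definition \ref{mo5def2} is exactly the data of an orientation $K_{\cM,s}^{1/2}$ on $(\cM,s)$. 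Hence $(\cM,s)$ becomes an oriented, finite type algebraic d-critical locus.

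Finally, I would apply Theorem \ref{mo5thm5} to $(\cM,s)$ with this orientation, obtaining a motive $MF_{\cM,s}\in\oM^{\hat\mu}_\cM$ characterized uniquely by $MF_{\cM,s}\vert_R=i^*\bigl(MF_{U,f}^{\rm mot,\phi}\bigr)\od\Up(Q_{R,U,f,i})$ on each critical chart $(R,U,f,i)$, and set $MF_\cM:=MF_{\cM,s}$. Naturality of $MF_\cM$ is then automatic: $(\cM,s)$ is natural up to canonical isomorphism by Corollary \ref{mo5cor1}, the orientation is determined by the chosen square root $\det(\cE^\bu)^{1/2}$, and $MF_{\cM,s}$ is unique given these data by Theorem \ref{mo5thm5}. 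I do not expect any genuine obstacle here; the only point needing a moment's care is verifying that a square root of $\det(\cE^\bu)$ on $\cM$ transports, under the natural isomorphism of Corollary \ref{mo5cor1}, to a square root of $K_{\cM,s}$ on $\cM^\red$ in the precise sense of Definition \ref{mo5def2}, which is immediate from the naturality of that isomorphism.
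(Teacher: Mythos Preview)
Your proposal is correct and matches the paper's approach: the paper states that Corollaries \ref{mo5cor3}--\ref{mo5cor5} follow immediately from Theorem \ref{mo5thm4} and Corollaries \ref{mo5cor1}--\ref{mo5cor2} together with Theorem \ref{mo5thm5}, without giving further detail. Your argument, invoking Corollary \ref{mo5cor1} to obtain the oriented d-critical structure and then Theorem \ref{mo5thm5} to produce the motive, is exactly this deduction spelled out.
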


\begin{cor} Let\/ $(S,\om)$ be an algebraic symplectic manifold and\/
$L,M$ finite type algebraic Lagrangian submanifolds in $S,$ and
write $X=L\cap M,$ as a subscheme of\/ $S$. Suppose we are given
square roots\/ $K_L^{1/2},K_M^{1/2}$ for $K_L,K_M$. Then we may
define a natural motive $MF_{L,M}\in \oM^{\hat\mu}_X.$
\label{mo5cor5}
\end{cor}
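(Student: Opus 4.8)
The plan is to obtain $MF_{L,M}$ as a special case of Theorem \ref{mo5thm5}, so that essentially all the work has already been done in Corollary \ref{mo5cor2} and Theorem \ref{mo5thm5}; the proof is a short assembly of these results, with the only delicate point being the compatibility of orientations.

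First I would observe that, as $L,M$ are of finite type, $X=L\cap M$ is a finite type $\K$-scheme, and that Corollary \ref{mo5cor2} endows $X$ with a natural algebraic d-critical structure $(X,s)$ together with a canonical isomorphism $K_{X,s}\cong K_L\vert_{X^\red}\ot K_M\vert_{X^\red}$ of line bundles on $X^\red$ (the restrictions making sense since $X^\red\subseteq X\subseteq L$ and $X^\red\subseteq X\subseteq M$ as closed subschemes of $S$). Next I would build an orientation on $(X,s)$ out of the given square roots: setting $L_0=K_L^{1/2}\vert_{X^\red}\ot K_M^{1/2}\vert_{X^\red}$, the isomorphisms $(K_L^{1/2})^{\ot^2}\cong K_L$ and $(K_M^{1/2})^{\ot^2}\cong K_M$ together with Corollary \ref{mo5cor2} give
\[
L_0^{\ot^2}\cong\bigl(K_L^{1/2}\bigr)^{\ot^2}\big\vert_{X^\red}\ot\bigl(K_M^{1/2}\bigr)^{\ot^2}\big\vert_{X^\red}\cong K_L\vert_{X^\red}\ot K_M\vert_{X^\red}\cong K_{X,s},
\]
so $L_0$ is a square root of $K_{X,s}$ in the sense of Definition \ref{mo5def2}, i.e.\ $(X,s)$ with $K_{X,s}^{1/2}:=L_0$ is a finite type oriented algebraic d-critical locus.

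Finally I would apply Theorem \ref{mo5thm5} to $(X,s)$ with this orientation to obtain $MF_{X,s}\in\oM^{\hat\mu}_X$, and define $MF_{L,M}:=MF_{X,s}$. The main (and essentially only) obstacle is the bookkeeping needed to see that the chain of isomorphisms above is canonical, so that the induced orientation $K_{X,s}^{1/2}$ --- and hence $MF_{L,M}$ --- is well defined up to canonical isomorphism and depends only on $(S,\om,L,M,K_L^{1/2},K_M^{1/2})$; this follows from the canonicity and naturality statements already contained in Corollary \ref{mo5cor2}, Definition \ref{mo5def2} and the uniqueness clause of Theorem \ref{mo5thm5}.
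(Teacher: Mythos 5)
Your proposal is correct and is exactly the argument the paper intends: it deduces Corollary \ref{mo5cor5} by combining Corollary \ref{mo5cor2} (which gives the d-critical structure $(X,s)$ on $X=L\cap M$ with $K_{X,s}\cong K_L\vert_{X^\red}\ot K_M\vert_{X^\red}$) with Theorem \ref{mo5thm5}, taking $K_{X,s}^{1/2}=K_L^{1/2}\vert_{X^\red}\ot K_M^{1/2}\vert_{X^\red}$ as the orientation. The paper leaves this assembly implicit, and your write-up fills it in correctly.
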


Corollary \ref{mo5cor4} has applications to Donaldson--Thomas
theory. If $Y$ is a Calabi--Yau 3-fold over $\C$ and $\tau$ a
suitable stability condition on coherent sheaves on $M$, the {\it
Donaldson--Thomas invariants\/} $DT^\al(\tau)$ are integers which
`count' the moduli schemes $\cM_{\rm st}^\al(\tau)$ of $\tau$-stable
coherent sheaves on $Y$ with Chern character $\al\in H^{\rm
even}(Y;\Q)$, provided there are no strictly $\tau$-semistable
sheaves in class $\al$ on $Y$. They were defined by
Thomas~\cite{Thom}.

Behrend \cite{Behr} showed that $DT^\al(\tau)$ may be written as a
weighted Euler characteristic $\chi(\cM_{\rm st}^\al(\tau),\nu)$,
where $\nu:\cM_{\rm st}^\al(\tau)\ra\Z$ is a certain constructible
function called the {\it Behrend function}. Joyce and Song
\cite{JoSo} extended the definition of $DT^\al(\tau)$ to classes
$\al$ including $\tau$-semistable sheaves (with
$DT^\al(\tau)\in\Q$), and proved a wall-crossing formula for
$DT^\al(\tau)$ for change of stability condition $\tau$. Kontsevich
and Soibelman \cite{KoSo1} gave a (partly conjectural) motivic
generalization of Donaldson--Thomas invariants, for Calabi--Yau
3-folds over $\K$, also with a wall-crossing formula.

Kontsevich and Soibelman define a motive over $\cM_{\rm
st}^\al(\tau)$, by associating a formal power series to each (not
necessarily closed) point, and taking its motivic Milnor fibre. The
question of how these formal power series and motivic Milnor fibres
vary in families over the base $\cM_{\rm st}^\al(\tau)$ is not
really addressed in \cite{KoSo1}. Corollary \ref{mo5cor4} answers
this question, showing that Zariski locally in $\cM_{\rm
st}^\al(\tau)$ we can take the formal power series and motivic
Milnor fibres to all come from a regular function $f:U\ra\bA^1$ on a
smooth $\K$-scheme~$U$.

The square root $\det(\cE^\bu)^{1/2}$ required in Corollary
\ref{mo5cor4} corresponds roughly to {\it orientation data\/} in
Kontsevich and Soibelman \cite[\S 5]{KoSo1}, \cite{KoSo2}.

\subsection{Torus localization of motives}
\label{mo53}

We will explain a torus localization formula for the motives
$MF_{X,s}$ of Theorem \ref{mo5thm5}, due to Davesh Maulik
\cite{Maul}. We need the following notation:

\begin{dfn} Suppose $(X,s)$ is a finite type algebraic d-critical
locus over $\K$. Let the multiplicative group $\bG_m=\K\sm\{0\}$ act
on the $\K$-scheme $X$, and write the action as $\rho(\la):X\ra X$
for $\la\in\bG_m$. Then
\begin{itemize}
\setlength{\itemsep}{0pt}
\setlength{\parsep}{0pt}
\item[(i)] We say that $s$ is $\bG_m$-{\it invariant}, and
$(X,s)$ is a $\bG_m$-{\it invariant d-critical locus}, if
$\rho(\la)^\star(s)=s$ for all $\la\in\bG_m$, where the
pullback\/ $\rho(\la)^\star$ is as in
Joyce~\cite[Prop.~2.3]{Joyc2}.
\item[(ii)] We say the $\bG_m$-action $\rho$ on $X$ is {\it
good\/} if $X$ may be covered by Zariski open, affine,
$\bG_m$-invariant $\K$-subschemes $U\subseteq X$.
\item[(iii)] We say the $\bG_m$-action on $X$ is {\it
circle-compact\/} if for all $x\in X$ the limit $\lim_{\la\ra
0}\rho(\la)x$ exists in $X$, where $\la\ra 0$ in $\bG_m=\K\sm
\{0\}\subset\K$. If $X$ is proper, then any $\bG_m$-action is
circle-compact.
\end{itemize}

Suppose $s$ is $\bG_m$-invariant, and the $\bG_m$-action $\rho$ is
good. Then Joyce \cite[Prop.s 2.43 \& 2.44]{Joyc2} shows that
$(X,s)$ admits a cover by $\bG_m$-equivariant critical charts, and
that two such charts can be compared on their overlap by
$\bG_m$-equivariant embeddings into a third $\bG_m$-equivariant
critical chart.

Write $X^{\bG_m}$ for the $\bG_m$-fixed subscheme of $X$, so that
$X^{\bG_m}\subseteq X$ is a finite type closed $\K$-subscheme, with
inclusion $\io:X^{\bG_m}\hookra X$. Write $s^{\bG_m}$ for the
pullback $\io^\star(s)\in H^0(\cSz_{X^{\bG_m}})$ of $s$ to
$X^{\bG_m}$. By \cite[Cor.~2.45]{Joyc2} $(X^{\bG_m},s^{\bG_m})$ is
an algebraic d-critical locus, and one can also show that each
$\bG_m$-equivariant orientation $K_{X,s}^{1/2}$ for $(X,s)$ induces
a natural orientation $K_{X^{\bG_m},s^{\bG_m}}^{1/2}$
for~$(X^{\bG_m},s^{\bG_m})$.

Write $X^{\bG_m}=\coprod_{i\in I}X^{\bG_m}_i$ for the decomposition
of $X^{\bG_m}$ into connected components, where $I$ is a finite
indexing set, and set $s^{\bG_m}_i=s^{\bG_m}
\vert_{\smash{X^{\bG_m}_i}},$ so that $(X^{\bG_m}_i,s^{\bG_m}_i)$ is
a connected algebraic d-critical locus.

Following Maulik \cite[\S 4]{Maul}, define the {\it virtual index\/}
$\mathop{\rm ind}^{\rm vir}(X_i^{\bG_m},X)$ of $X_i^{\bG_m}$ as
follows: if $x\in X_i^{\bG_m}$ is a $\K$-point then $T_xX$ is a
$\K$-vector space with a $\bG_m$-action induced by $\rho$, so we can
split $T_xX=(T_xX)_0\op (T_xX)_+\op (T_xX)_-$, where
$(T_xX)_0,(T_xX)_+,(T_xX)_-$ are the direct sums of eigenspaces of
the $\bG_m$ action on which $\bG_m$ acts with zero weight, or
positive weight, or negative weight, respectively. Then
$(T_xX)_0=T_xX_i^{\bG_m}$. We define $\mathop{\rm ind}^{\rm
vir}(X_i^{\bG_m},X)=\dim (T_xX)_+-\dim (T_xX)_-$. This is
independent of the choice of~$x\in X_i^{\bG_m}$.
\label{mo5def3}
\end{dfn}

\begin{thm}[Maulik \cite{Maul}] Suppose $(X,s)$ is an oriented
d-critical locus, and\/ $\rho$ is a good, circle-compact\/
$\bG_m$-action on $X$ which fixes $s$ and preserves the orientation
$K_{X,s}^{1/2}$. Then as above, the $\bG_m$-invariant subscheme
$X^{\bG_m}$ extends to an oriented d-critical locus
$(X^{\bG_m},s^{\bG_m})=\coprod_{i\in I}(X^{\bG_m}_i,s^{\bG_m}_i)$.

Theorem {\rm\ref{mo5thm5}} gives relative motives
$MF_{X,s}\in\oM^{\hat\mu}_X,$ $MF_{\smash{X^{\bG_m}_i,s^{\bG_m}_i}}
\in\oM^{\smash{\hat\mu}}_{X^{\smash{\bG_m}}_i}$. Writing $\pi:X\ra
*=\Spec\K$ and\/ $\pi:X^{\bG_m}_i\ra *$ for the projections, we have
absolute motives $\pi_*(MF_{X,s}),\pi_*(MF_{X^{\bG_m}_i,
s^{\bG_m}_i})\in\oM^{\hat\mu}_\K$. These are related by
\e
\pi_*(MF_{X,s})=\ts\sum_{i\in I}\bL^{-\mathop{\rm ind}^{\rm
vir}(X_i^{\bG_m},X)/2}\od\pi_*(MF_{\smash{X^{\bG_m}_i,s^{\bG_m}_i}}).
\label{mo5eq6}
\e
In the special case in which $X^{\bG_m}$ consists (as a scheme) only
of finitely many isolated points, equation \eq{mo5eq6} reduces to
\e
\pi_*(MF_{X,s})=\ts\sum_{x\in X^{\bG_m}} \bL^{-\mathop{\rm ind}^{\rm
vir}(\{x\},X)/2}.
\label{mo5eq7}
\e

\label{mo5thm6}
\end{thm}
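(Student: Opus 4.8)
The plan is to reduce the theorem to a Zariski-local statement on $\bG_m$-equivariant critical charts, prove that statement by a $\bG_m$-equivariant version of the motivic integration of \S\ref{mo22}, and then glue using the uniqueness built into Theorem \ref{mo5thm5}. Since $\rho$ is good and fixes $s$, Joyce \cite[Prop.s 2.43 \& 2.44, Cor.~2.45]{Joyc2} shows $(X,s)$ is covered by critical charts $(R,U,f,i)$ in which $U$ is a smooth $\bG_m$-scheme, $f:U\ra\bA^1$ is $\bG_m$-invariant, and $i:R\hookra U$ is equivariant with $i(R)=\Crit(f)$, and that two such charts can be compared by equivariant embeddings into a third; since $\rho$ preserves the orientation, the $\Z_2$-bundles $Q_{R,U,f,i}$ of Theorem \ref{mo5thm5} are $\bG_m$-equivariant and restrict on the fixed locus to the $\Z_2$-bundles of the induced orientation of Definition \ref{mo5def3}. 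By Theorem \ref{mo5thm5}, $MF_{X,s}\vert_R=i^*(MF_{U,f}^{\rm mot,\phi})\od\Up(Q_{R,U,f,i})$, so it suffices to prove \eq{mo5eq6} for $\Crit(f)$ inside such a $U$: namely that on the Bialynicki--Birula stratum $X_i^+=\{x:\lim_{\la\ra0}\rho(\la)x\in X_i^{\bG_m}\}$, which maps to $X_i^{\bG_m}=\Crit(f\vert_{U^{\bG_m}})$, the piece $MF_{U,f}^{\rm mot,\phi}\vert_{X_i^+}$ pushes forward to $\bL^{-\mathop{\rm ind}^{\rm vir}(X_i^{\bG_m},X)/2}\od\pi_*\bigl(MF_{U^{\bG_m},f\vert_{U^{\bG_m}}}^{\rm mot,\phi}\vert_{X_i^{\bG_m}}\bigr)$. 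A preliminary observation that organizes the bookkeeping: writing $N=N_{F/U}$ for the normal bundle of a fixed component $F=U_i^{\bG_m}$, split by $\bG_m$-weight as $N^0\oplus N^+\oplus N^-$ with $N^0=0$, a rank count on $\ker\Hess_x f$ (which is block-off-diagonal, pairing $N^+$ with $N^-$) shows $\mathop{\rm ind}^{\rm vir}(X_i^{\bG_m},X)=\dim N^+-\dim N^-$, a purely ambient quantity.

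For the local statement I would take a $\bG_m$-equivariant resolution $(V,\pi)$ of $f$ (equivariant Hironaka), so that $V$ is smooth with a $\bG_m$-action, $\pi$ proper and equivariant, and the divisors $E_i$ are $\bG_m$-invariant and hence each carries its own Bialynicki--Birula decomposition. Feeding this into the Denef--Loeser formula \eq{mo2eq6} and decomposing each $\ti E_I^\ci$ along its BB strata, one matches each term with the corresponding term of the analogous formula for $MF_{U^{\bG_m},f\vert_{U^{\bG_m}}}^{\rm mot}$; the numerical data $\nu_i,N_i$ in \eq{mo2eq6} together with the BB-weights of $E_I^\ci$ should conspire to produce the factor $\bL^{-(\dim N^+-\dim N^-)/2}$ once one also accounts for the fact that the normalising power $\bL^{-\dim U/2}$ in \eq{mo2eq8} is replaced downstairs by $\bL^{-\dim U^{\bG_m}/2}$. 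Because $\rho$ is circle-compact the strata $X_i^+$ partition $X$, so $\pi_*(MF_{X,s})=\sum_i\pi_*(MF_{X,s}\vert_{X_i^+})$, and summing the local identities over $i$ and over the charts yields \eq{mo5eq6}; the special case \eq{mo5eq7} is then immediate since for an isolated fixed point $X_i^{\bG_m}=\{x\}$ we have $MF_{\{x\},0}^{\rm mot,\phi}=1$.

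The main obstacle is this equivariant motivic-integration step: because $X=\Crit(f)$ is singular, the BB strata $X_i^+$ are \emph{not} affine-space bundles over $X_i^{\bG_m}$, so the localization cannot be read off directly on $X$; the fix is to run the whole weight computation on the smooth $U$ and its smooth resolution $V$, where BB genuinely gives affine bundles and \eq{mo2eq6} applies, and only then restrict to $\Crit(f)$ and apply $\pi_*$. The delicate points are (i) verifying that the half-integer powers of $\bL$ assemble to exactly $-\mathop{\rm ind}^{\rm vir}/2$ — this is where the identity $\mathop{\rm ind}^{\rm vir}=\dim N^+-\dim N^-$ and the normalisation in \eq{mo2eq8} do the work — and (ii) checking that the equivariant resolution and the $\Z_2$-bundles $Q_{R,U,f,i}$ can be chosen or compared compatibly on overlaps of charts, so that the locally defined contributions glue; the hypotheses that $\rho$ is circle-compact, fixes $s$, and preserves $K_{X,s}^{1/2}$ are used precisely to secure, respectively, the partition of $X$ into BB strata, the existence of equivariant critical charts, and the compatibility of the orientation data on $X$ and on $X^{\bG_m}$. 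A possible alternative to the resolution argument is to work directly with the arc-space description \eq{mo2eq5} of $Z_f(T)$, endowing $\cL_n(U)$ with the combined action of $\rho$ and the loop-rotation $\bG_m$ and applying a Bialynicki--Birula decomposition there; this makes the appearance of $\mathop{\rm ind}^{\rm vir}$ more transparent but needs a localization theorem for motivic integrals that is not in the excerpt.
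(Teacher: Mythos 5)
The paper does not actually prove this theorem: it is quoted from Maulik \cite{Maul} (in preparation), and the text only records the shape of his argument, namely that he first proves a torus localization formula for $\bG_m$-equivariant motivic vanishing cycles and then deduces the statement using the $\bG_m$-equivariant critical charts of \cite[\S 2.6]{Joyc2}. Your outline reproduces exactly this two-step structure, and your preliminary observation that $\mathop{\rm ind}^{\rm vir}(X_i^{\bG_m},X)=\dim N^+-\dim N^-$ is a purely ambient quantity (the $\bG_m$-invariant Hessian pairs $N^+$ with $N^-$, so the positive- and negative-weight parts of $\ker\Hess_xf$ lose equal ranks) is correct and is indeed why the exponent in \eq{mo5eq6} can be read off from $U$ rather than from the singular $X$.

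However, what you have written is a plan, not a proof. The entire content of the theorem is the local identity $\pi_*\bigl(MF^{\rm mot,\phi}_{U,f}\vert_{X_i^+}\bigr)=\bL^{-\mathop{\rm ind}^{\rm vir}(X_i^{\bG_m},X)/2}\od\pi_*\bigl(MF^{\rm mot,\phi}_{U^{\bG_m},f\vert_{U^{\bG_m}}}\vert_{X_i^{\bG_m}}\bigr)$, and at precisely that point you say the terms of \eq{mo2eq6} ``should conspire''. This does not follow from a Bialynicki--Birula decomposition of an equivariant resolution in any routine way: (a) the fixed locus $V^{\bG_m}$ of an equivariant resolution of $f$ need not be a resolution of $f\vert_{U^{\bG_m}}$, so there is no term-by-term matching of the two Denef--Loeser formulas; (b) the BB decomposition of $\ti E_I^\ci$ produces factors $\bL^{d}$ only after forgetting the $\hat\mu$-action, and the $\mu_{m_I}$-covers $\ti E_I^\ci$ need not be pulled back from the fixed loci of the BB fibrations, so the monodromic classes $[\ti E_I^\ci,\hat\rho_I]$ do not obviously localize; and (c) a sum over attracting strata sees only $\dim N^+$, and converting this into the signed quantity $\dim N^+-\dim N^-$ requires a genuine identity relating the nearby-cycle motive over the attracting set to that over the fixed locus --- which is exactly Maulik's localization theorem, the thing to be proved. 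Two smaller points: your final ``summing over $i$ and over the charts'' would double-count on chart overlaps; since the target identity is an equality of absolute motives you should cut $X$ into disjoint locally closed pieces, each inside a single equivariant chart, and invoke motivic additivity. And the compatibility of the orientation bundles $Q_{R,U,f,i}$ with restriction to the fixed locus, which you assert in one line, is part of what \cite[\S 2.6]{Joyc2} must supply and deserves an argument. As it stands the proposal correctly locates the difficulty but does not resolve it.
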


As in \cite{Maul}, equations \eq{mo5eq6}--\eq{mo5eq7} are powerful
tools for computing the absolute motives $\pi_*(MF_{X,s})\in
\oM^{\hat\mu}_\K$ in examples. Maulik first proves a torus
localization formula for $\bG_m$-equivariant motivic vanishing
cycles, and then deduces Theorem \ref{mo5thm6} using results of
\cite[\S 2.6]{Joyc2}. It seems likely that Theorem \ref{mo5thm6}
also holds without the assumption that the $\bG_m$-action $\rho$ is
good.

\subsection{Proof of Theorem \ref{mo5thm5}}
\label{mo54}

Let $(X,s)$ be an algebraic d-critical locus with orientation
$K_{X,s}^{1/2}$. We must construct $MF_{X,s}\in\oM^{\hat\mu}_X$
satisfying \eq{mo5eq5} for each critical chart $(R,U,f,i)$. Since
such $R\subseteq X$ form a Zariski open cover of $X$, and
\eq{mo5eq5} determines $MF_{X,s}\vert_R$, there exists a unique
$MF_{X,s}$ satisfying \eq{mo5eq5} for all $(R,U,f,i)$ if and only if
the prescribed values $MF_{X,s}\vert_R$ agree on overlaps between
critical charts. That is, we must prove that if $(R,U,f,i)$ and
$(S,V,g,j)$ are critical charts, then
\e
\bigl[i^*\bigl(MF_{U,f}^{\rm mot,\phi}\bigr)\od\Up
(Q_{R,U,f,i})\bigr]\big\vert_{R\cap S}=\bigl[j^*\bigl(MF_{V,g}^{\rm
mot,\phi}\bigr)\od\Up (Q_{S,V,g,j})\bigr]\big\vert_{R\cap S}.
\label{mo5eq8}
\e

Fix $x\in R\cap S\subseteq X$, and let
$(R',U',f',i'),(S',V',g',j'),(T,W,h,k),\Phi,\Psi$ be as in Theorem
\ref{mo5thm2}. Then as in \cite[Th.~6.9(ii)]{BBDJS} there is a
natural isomorphism of principal $\Z_2$-bundles on $R'$
\e
\La_\Phi:Q_{T,W,h,k}\vert_{R'}\,{\buildrel\cong\over\longra}\,
i\vert_{R'}^*(P_\Phi)\ot_{\Z_2} Q_{R,U,f,i}\vert_{R'},
\label{mo5eq9}
\e
for $P_\Phi\ra \Crit(f')$ the principal $\Z_2$-bundle of
orientations of $\bigl(N_{\sst U'W}\vert_{\Crit(f')},q_{\sst
U'W}\bigr)$ as in Definition \ref{mo4def1}, defined as follows:
local isomorphisms
\begin{gather*}
\al:K_{X,s}^{1/2}\vert_{R^{\prime\red}}\longra
i^*(K_U)\vert_{R^{\prime\red}},\quad
\be:K_{X,s}^{1/2}\vert_{R^{\prime\red}}\longra k^*(K_W)
\vert_{R^{\prime\red}},\\
\text{and}\qquad\ga:i^*(K_U)\vert_{R^{\prime\red}}\longra
k^*(K_W)\vert_{R^{\prime\red}}
\end{gather*}
with $\al\ot\al=\io_{R,U,f,i}\vert_{R^{\prime\red}},$
$\be\ot\be=\io_{T,W,h,k}\vert_{R^{\prime\red}},$
$\ga\ot\ga=i\vert_{R^{\prime\red}}^*(J_\Phi)$ correspond to local
sections $s_\al:R'\ra Q_{R,U,f,i}\vert_{R'},$ $s_\be:R'\ra
Q_{T,W,h,k}\vert_{R'},$ $s_\ga:R'\ra i\vert_{R'}^*(P_\Phi)$.
Equation \eq{mo5eq4} shows that $\be=\ga\ci\al$ is a possible
solution for $\be$, and we define $\La_\Phi$ in \eq{mo5eq9} such
that $\La_\Phi(s_\be)=s_\ga\ot_{\Z_2}s_\al$ if and only
if~$\be=\ga\ci\al$.

We now have
\ea
\bigl[k^*&\bigl(MF_{W,h}^{\rm mot,\phi}\bigr)\od\Up
(Q_{T,W,h,k})\bigr]\big\vert_{R'}
\nonumber\\
&=i^{\prime *}\bigl[\Phi\vert_{\Crit(f')}^*\bigl(MF_{W,h}^{\rm
mot,\phi}\bigr)\bigr]\od\Up (Q_{T,W,h,k})\vert_{R'}
\nonumber\\
&=i^{\prime *}\bigl[MF^{\rm mot,
\phi}_{U',f'}\od \Up (P_\Phi)\bigr]\od\Up (Q_{T,W,h,k})\vert_{R'}
\nonumber\\
&=i\vert_{R'}^*\bigl[MF^{\rm mot,\phi}_{U,f}\bigr]\od
\Up\bigl(i\vert_{R'}^*(P_\Phi)\bigr)\od\Up\bigl(Q_{T,W,h,k}\vert_{R'}\bigr)
\nonumber\\
&=i\vert_{R'}^*\bigl[MF^{\rm mot,\phi}_{U,f}\bigr]\od
\Up\bigl(i\vert_{R'}^*(P_\Phi)\ot_{\Z_2}Q_{T,W,h,k}\vert_{R'}\bigr)
\nonumber\\
&=i\vert_{R'}^*\bigl[MF^{\rm mot,\phi}_{U,f}\bigr]\od
\Up\bigl(Q_{R,U,f,i}\vert_{R'}\bigr)
\nonumber\\
&=\bigl[i^*\bigl(MF_{U,f}^{\rm mot,\phi}\bigr)\od\Up
(Q_{R,U,f,i})\bigr]\big\vert_{R'},
\label{mo5eq10}
\ea
using $\Phi\vert_{\Crit(f')}\ci i'=k\vert_{R'}$ in the first step,
Theorem \ref{mo4thm2} for $\Phi:(U',f')\ra (W,h)$ in the second,
$U'\subseteq U$, $f'=f\vert_{U'}$ and functoriality of $\Up$ in the
third, \eq{mo2eq19} in the fourth, and \eq{mo5eq9} in the fifth.

Similarly, from $\Psi:(S',V',g',j')\hookra(T,W,h,k)$ we obtain
\e
\bigl[k^*\bigl(MF_{W,h}^{\rm mot,\phi}\bigr)\od\Up
(Q_{T,W,h,k})\bigr]\big\vert_{S'}= \bigl[j^*\bigl(MF_{V,g}^{\rm
mot,\phi}\bigr)\od\Up (Q_{S,V,g,j})\bigr]\big\vert_{S'}.
\label{mo5eq11}
\e
Combining the restrictions of \eq{mo5eq10}--\eq{mo5eq11} to $R'\cap
S'$ proves the restriction of \eq{mo5eq8} to $R'\cap S'$. Since we
can cover $R\cap S$ by such Zariski open $R'\cap S'\subseteq R\cap
S$, this proves \eq{mo5eq8}, and hence Theorem~\ref{mo5thm5}.

\medskip

\noindent{\small\sc Address for Dominic Joyce:

\noindent The Mathematical Institute, Radcliffe Observatory Quarter,
Woodstock Road, Oxford, OX2 6GG, U.K.

\noindent E-mail: {\tt joyce@maths.ox.ac.uk}.
\smallskip

\noindent Address for Vittoria Bussi:

\noindent ICTP, Strada Costiera 11, Trieste, Italy.

\noindent E-mail: {\tt vbussi@ictp.it}.
\smallskip

\noindent Address for Sven Meinhardt:

\noindent School of Mathematics and Statistics, Hicks Building, Hounsfield Road, Sheffield, S3 7RH, U.K.

\noindent E-mail: {\tt S.Meinhardt@shef.ac.uk}. }

\end{document}